\providecommand{\U}[1]{\protect \rule{.1in}{.1in}}
\newtheorem{theorem}{Theorem}[section]
\newtheorem{definition}[theorem]{Definition}
\newtheorem{example}[theorem]{Example}
\newtheorem{lemma}[theorem]{Lemma}
\newtheorem{proposition}[theorem]{Proposition}
\newtheorem{remark}[theorem]{Remark}
\newenvironment{proof}[1][Proof]{\noindent \textbf{#1.} }{\  \rule{0.5em}{0.5em}}
\numberwithin{equation}{section}
\newtheorem{sch}{Scheme}
\begin{document}
\begin{frontmatter}
\title{An efficient numerical method for forward-backward stochastic differential
equations driven by $G$-Brownian motion \tnoteref{mytitlenote}}
\tnotetext[mytitlenote]{This work was funded by National Key R\&D Program of China (No. 2018YFA0703900)
and the National Natural Science Foundation of China (No. 11671231).}
%
\author[mymainaddress,mysecondaryaddress]{Mingshang Hu}
\ead{humingshang@sdu.edu.cn}
\author[mysecondaryaddress]{Lianzi Jiang\corref{mycorrespondingauthor}}
\cortext[mycorrespondingauthor]{Corresponding author}
\ead{jianglianzi95@163.com}
\address[mymainaddress]{Zhongtai Securities Institute for Financial Studies,
Shandong University, Jinan, Shandong 250100, PR China}
\address[mysecondaryaddress]{Zhongtai Securities Institute for Financial Studies,
Shandong University, Jinan, Shandong 250100, PR China}
\begin{abstract}
In this paper, we study the numerical method for solving
forward-backward stochastic differential equations driven by $G$-Brownian
motion ($G$-FBSDEs) which correspond to fully nonlinear partial differential
equations (PDEs). First, we give an approximate conditional $G$-expectation
and obtain some feasible methods to calculate the distribution of $G$-Brownian
motion. On this basis, some efficient numerical schemes for $G$-FBSDEs are
then proposed. We rigorously analyze the errors of the proposed schemes and
prove the convergence. Finally, several numerical experiments are presented to
demonstrate the accuracy of our schemes.
\end{abstract}
\begin{keyword}
backward stochastic differential equations, $G$-Brownian motion, numerical schemes, fully nonlinear PDEs
\end{keyword}
\end{frontmatter}

\linenumbers

\section{Introduction}

Considering the volatility uncertainty in the financial market, Avellaneda et
al. \cite{ALP1995}, Lyons \cite{L1995}, and Dellacherie \cite{D1972} initially
studied the superhedging of European options. But for the superhedging of
general path-dependence options, their methods are no longer suitable.
Recently, Peng \cite{P2004,P2007,P2008,P2010} introduced the $G$-expectation
theory to deal with this problem, see also \cite{DM2006,STZh2012} for
different approaches. Under the $G$-expectation framework, a new notion of
$G$-normal distribution was introduced, which is the limit distribution
corresponding to the central limit theorem. The notion of $G$-normal
distribution plays the same important role in the theory of sublinear
expectation as that of normal distribution in the classical probability
theory. Based on it, a new type of $G$-Brownian motion and the related
stochastic calculus of It\^{o}'s type have been established.

In this paper, we study the feasible numerical scheme for the following
forward-backward stochastic differential equation driven by $G$-Brownian
motion ($G$-FBSDE for short, we always use Einstein convention):%
\begin{equation}
\left \{
\begin{aligned} dX_{t} & =b\left( t,X_{t}\right) dt+h_{ij}\left( t,X_{t}\right) d\langle B^{i},B^{j}\rangle_{t}+\sigma \left( t,X_{t}\right) dB_{t}, \;X_{0}=x_{0}\in \mathbb{R}^{m},\\ -dY_{t} & =f\left( t,X_{t},Y_{t},Z_{t}\right) dt+g_{ij}\left( t,X_{t},Y_{t},Z_{t}\right) d\langle B^{i},B^{j}\rangle_{t}-Z_{t}dB_{t}-dK_{t},\\ \text{ \ }Y_{T} & =\phi \left( X_{T}\right) , \end{aligned}\right.
\label{1.0}%
\end{equation}
where $B_{t}=(B_{t}^{1},\ldots,B_{t}^{d})^{\top}$ is a $d$-dimensional
$G$-Brownian motion defined in the $G$-expectation space $(\Omega_{T}
,L_{G}^{1}(\Omega_{T}),\mathbb{\hat{E}})$, $K$ is a decreasing $G$-martingale,
$b,h_{ij}:[0,T]\times \mathbb{R}^{m}\rightarrow \mathbb{R}^{m}$, $\sigma
:[0,T]\times \mathbb{R}^{m}\rightarrow \mathbb{R}^{m\times d}$, $\phi
:\mathbb{R}^{m}\rightarrow \mathbb{R}$, $f,g_{ij}:[0,T]\times \mathbb{R}%
^{m}\times \mathbb{R\times R}^{d}\rightarrow \mathbb{R}$, and $h_{ij}=h_{ji}$,
$g_{ij}=g_{ji},$ for $1\leq i,j\leq d$. The first equation in (\ref{1.0}) is
the stochastic differential equation driven by $G$-Brownian motion ($G$-SDE),
and the second equation is the backward stochastic differential equation
driven by $G$-Brownian motion ($G$-BSDE).

Under some standard conditions on $b,h_{ij},\sigma,f,g_{ij}$, and $\phi$, Hu
et al. \cite{HJPS20141} proved the existence and uniqueness of the solution
for $G$-BSDEs. Moreover, they \cite{HJPS20142} studied the nonlinear
Feynman-Kac formula under the $G$-framework (see Theorem
\ref{Nonlinear Feynman-Kac}), which built the relationship between $G$-FBSDEs
and fully nonlinear partial differential equations (PDEs). In addition,
Cheridito et al. \cite{CSTV2007} and Soner et al. \cite{STZh2012} developed a
new type of fully nonlinear FBSDEs, called the 2FBSDEs, which is also
associated with a class of fully nonlinear PDEs. Tremendous efforts have been
made on the numerical computation of FBSDEs or 2FBSDEs (see, e.g.,
\cite{B1997,BD2007,C2013,CR2015,CM2014,EHJ2017,FTW2011,GLW2005,GT2016,GZhZh2015,
KZhZh2015,MZ2005,MT2006,RO2016,TLG2020,Z2004,ZZ2002,ZhCP2006,ZhFZh2014} and
the references therein), but little seems to be known about the numerical
results for $G$-FBSDEs. There are some obstacles in developing the numerical
scheme for $G$-FBSDEs. On the one hand, there is no density representation for
the $G$-normal distribution, and the classical numerical integration methods
are no longer applicable. On the other hand, owing to the sublinear nature of
$G$-expectation, many conclusions under the linear expectation can not be
extended to the $G$-expectation, which increases the difficulty of theoretical analysis.

For the first obstacle, inspired by the $G$-expectation representation (see
Theorem \ref{E repesent}), we give an approximate conditional $G$-expectation,
which leads to the feasible methods to calculate the distribution of
$G$-Brownian motion, including the trinomial tree rule and the Gauss-Hermite
quadrature rule (see also \cite{D2012,YZh2016} for other different methods).
On this basis, some efficient numerical schemes for solving $G$-FBSDEs are
proposed. For the second obstacle, using the property of $G$-expectation, we
rigorously analyze the errors of the proposed schemes and prove the
convergence. Some examples are given to numerically demonstrate the accuracy
of the proposed schemes. To the best of our knowledge, this is the first
attempt to design the numerical scheme for $G$-FBSDEs.

The paper is organized as follow. In Section 2, we recall some preliminaries
used in the $G$-framework. We propose the numerical schemes for $G$-FBSDEs in
Section 3. The convergence result of the proposed schemes is rigorously proved
in Section 4. In Section 5, we extend our result to the case of
multi-dimensional Brownian motion. Finally, various numerical examples
illustrate the performance of our schemes in Section 6.

\section{Preliminaries}

For any fixed $T>0$, let $\Omega_{T}=C_{0}([0,T];\mathbb{R}^{d})$ be the space
of $\mathbb{R}^{d}$-valued continuous paths on $[0,T]$ with $\omega_{0}=0$,
endowed with the supremum norm, and $B_{t}(\omega)=\omega_{t}$ be the
canonical process. Set
\[
Lip(\Omega_{T})=\{ \varphi(B_{t_{1}},\ldots,B_{t_{n}}):n\geq1,t_{1}%
,\ldots,t_{n}\in \lbrack0,T],\varphi \in C_{b,Lip}(\mathbb{R}^{d\times
n})\} \text{,}%
\]
where $C_{b,Lip}(\mathbb{R}^{d\times n})$ denotes the set of bounded Lipschitz
functions on $\mathbb{R}^{d\times n}$.

Peng \cite{P2007} constructed a consistent sublinear expectation space
$(\Omega_{T},Lip(\Omega_{T}),\mathbb{\hat{E}},(\mathbb{\hat{E}}_{t})_{t\geq
0})$, called a $G$-expectation space, and the canonical process $(B_{t}%
)_{t\geq0}$ is called a $G$-Brownian motion. The monotonic and sublinear
function $G:\mathbb{S}(d)\rightarrow \mathbb{R}$ is defined by
\[
G\left(  A\right)  :=\frac{1}{2}\mathbb{\hat{E}[}\left \langle AB_{1}%
,B_{1}\right \rangle ],\text{ \ }A\in \mathbb{S}(d),
\]
where $\mathbb{S}(d)$ denotes the collection of $d\times d$ symmetric
matrices. Note that there exists a bounded and closed subset $\Sigma \subset$
$\mathbb{S}^{+}(d)$ such that
\[
G\left(  A\right)  =\frac{1}{2}\sup_{Q\in \Sigma}tr[QA],\text{ \ }%
A\in \mathbb{S}(d),
\]
where $\mathbb{S}^{+}(d)\ $denotes the collection of nonnegative definite
elements in $\mathbb{S}(d)$. In this paper, we assume that $G$ is
non-degenerate, i.e., there exist some constants $0<\underline{\sigma}^{2}
\leq \bar{\sigma}^{2}<\infty$ such that $\frac{1}{2}\underline{\sigma}
^{2}tr[A-B]\leq G(A)-G(B) \leq \frac{1}{2}\bar{\sigma}^{2}tr[A-B]$ for
$A\geq B$.

For each given $p\geq1$, define $\Vert X\Vert_{L_{G}^{p}}=(\mathbb{\hat{E}%
}[|X|^{p}])^{1/p}$ for $X\in Lip(\Omega_{T})$, and denote by $L_{G}^{p}%
(\Omega_{T})$ the completion of $Lip(\Omega_{T})$ under the norm $\Vert
\cdot \Vert_{L_{G}^{p}}$. Then for $t\in \left[  0,T\right]  $, $\mathbb{\hat
{E}}_{t}[\cdot]$ can be extended continuously to $L_{G}^{1}(\Omega_{T})$.

\begin{definition}
Let $M_{G}^{0}(0,T)$ be the collection of processes in the following form: for
a given partition $\{t_{0},t_{1},\ldots,t_{N}\}$ of $[0,T]$,
\[
\eta_{t}(\omega)=\sum_{n=0}^{N-1}\xi_{n}(\omega)I_{[t_{n},t_{n+1})}(t),
\]
where $\xi_{n}\in Lip(\Omega_{t_{n}})$, $n=0,1,\ldots,N-1$.
\end{definition}

For each given $p\geq1$, define $\Vert \eta \Vert_{M_{G}^{p}}=(\mathbb{\hat{E}%
}[\int_{0}^{T}\left \vert \eta_{s}\right \vert ^{p}ds])^{1/p}$ for $\eta \in
M_{G}^{0}(0,T)$, and denote by $M_{G}^{p}(0,T)$ the completion of $M_{G}%
^{0}(0,T)$ under $\Vert \cdot \Vert_{M_{G}^{p}}$. Denote by $\langle
B\rangle_{t}:=(\langle B^{i},B^{j}\rangle_{t})_{i,j=1}^{d}$\ the mutual
variation process. For $\xi_{t}^{i}\in$ $M_{G}^{2}(0,T)$ and $\eta_{t}^{ij}%
\in$ $M_{G}^{1}(0,T)$, $1\leq i,j\leq d$, the $G$-It\^{o} integral $\int
_{0}^{T}\xi_{t}dB_{t}:=\sum \limits_{i=1}^{d}\int_{0}^{T}\xi_{t}^{i}dB_{t}^{i}%
$\ and $\int_{0}^{T}\eta_{t}d\langle B\rangle_{t}:=\sum \limits_{i,j=1}^{d}%
\int_{0}^{T}\eta_{t}^{ij}d\langle B^{i},B^{j}\rangle_{t}$ are well defined.
The readers may refer to \cite{P2007,P2008,P2010}\ for more details of
$G$-It\^{o}'s integral and $G$-FBSDEs.

The following result is the representation theorem for the $G$-expectation.

\begin{theorem}
[\cite{DHP2011,HP2009}]\label{E repesent}There exists a weakly compact family
$\mathcal{P}$ of probability measures on $\left(  \Omega_{T},\mathcal{B}(\Omega_{T})\right)$ such that
\[
\mathbb{\hat{E}}\left[  X\right]  =\sup_{P\in \mathcal{P}}\mathbb{E}_{P}\left[
X\right]  ,\text{ for all }X\in L_{G}^{1}(\Omega_{T}).
\]
$\mathcal{P}$ is called a set that represents $\mathbb{\hat{E}}$.
\end{theorem}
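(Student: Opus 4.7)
The plan is to identify $\hat{\mathbb{E}}$ first as a sublinear functional on a nice function space, dualize it to obtain a family of linear functionals, realize those as probability measures via a Riesz-type argument, and finally extend the representation from that dense subspace to all of $L_G^1(\Omega_T)$.

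First I would restrict attention to $C_b(\Omega_T)$ (or equivalently $Lip(\Omega_T)$, which is dense in it). On this space $\hat{\mathbb{E}}$ is a sublinear, monotone, constant-preserving functional. For each fixed $X \in C_b(\Omega_T)$ the Hahn-Banach theorem produces a linear functional $L_X$ on $C_b(\Omega_T)$ that is dominated by $\hat{\mathbb{E}}$ and satisfies $L_X(X)=\hat{\mathbb{E}}[X]$. Let $\mathcal{L}$ be the collection of all linear functionals on $C_b(\Omega_T)$ dominated by $\hat{\mathbb{E}}$; then trivially $\hat{\mathbb{E}}[X]=\sup_{L\in\mathcal{L}} L(X)$ for every $X\in C_b(\Omega_T)$. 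Each $L\in\mathcal{L}$ is positive (because $\hat{\mathbb{E}}[-X]\leq 0$ when $X\geq 0$) and satisfies $L(1)=1$, so it is a positive normalized linear functional on $C_b(\Omega_T)$.

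The key analytic step is to show that every $L\in\mathcal{L}$ comes from a countably additive probability measure on $(\Omega_T,\mathcal{B}(\Omega_T))$. For this I would first prove the following regularity property of $\hat{\mathbb{E}}$: if $X_n\in C_b(\Omega_T)$ decreases pointwise to $0$, then $\hat{\mathbb{E}}[X_n]\downarrow 0$. This is established by approximating $X_n$ through its finite-dimensional cylindrical restrictions and exploiting that the finite-dimensional marginals of $\hat{\mathbb{E}}$ are sublinear expectations on $\mathbb{R}^{d\times n}$ defined via the viscosity solution of the $G$-heat equation, for which Dini-type continuity on compact sets is standard, together with a tightness estimate on $\Omega_T=C_0([0,T];\mathbb{R}^d)$ using moment bounds $\hat{\mathbb{E}}[|B_t-B_s|^p]\leq C|t-s|^{p/2}$ (Kolmogorov). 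Once this regularity is available, any $L\leq\hat{\mathbb{E}}$ on $C_b(\Omega_T)$ is $\sigma$-continuous from above at $0$, so the Daniell-Stone (equivalently Riesz-Markov-Kakutani together with $\sigma$-smoothness) theorem yields a unique Borel probability $P$ on $\Omega_T$ with $L(X)=\mathbb{E}_P[X]$ for $X\in C_b(\Omega_T)$. Denote by $\mathcal{P}$ the set of all such $P$.

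Next I would establish weak compactness of $\mathcal{P}$. By Prokhorov it suffices to show tightness, and for tightness I would use the moment bounds on the canonical process together with the Arzelà-Ascoli criterion on $C_0([0,T];\mathbb{R}^d)$: there is a family of compact sets $K_\varepsilon \subset \Omega_T$ (of the form $\{\omega:\ \sup_n n\cdot\text{modulus}(\omega,\delta_n)\leq c_n\}$) whose complements have $\hat{\mathbb{E}}$-capacity less than $\varepsilon$, hence $\sup_{P\in\mathcal{P}}P(K_\varepsilon^c)\leq\varepsilon$. Weak closedness of $\mathcal{P}$ follows because the constraint $\mathbb{E}_P[X]\leq\hat{\mathbb{E}}[X]$ for all $X\in C_b(\Omega_T)$ is preserved under weak limits.

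The final step is to extend the identity $\hat{\mathbb{E}}[X]=\sup_{P\in\mathcal{P}}\mathbb{E}_P[X]$ from $C_b(\Omega_T)$ to $L_G^1(\Omega_T)$. Since $Lip(\Omega_T)\subset C_b(\Omega_T)$ and $L_G^1(\Omega_T)$ is the completion of $Lip(\Omega_T)$ under $\|\cdot\|_{L_G^1}$, for $X\in L_G^1(\Omega_T)$ take $X_n\in Lip(\Omega_T)$ with $\hat{\mathbb{E}}[|X_n-X|]\to 0$; then for each $P\in\mathcal{P}$, $\mathbb{E}_P[|X_n-X|]\leq\hat{\mathbb{E}}[|X_n-X|]\to 0$ uniformly in $P$, so both sides of the representation pass to the limit. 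The main obstacle in the whole program is the downward continuity $\hat{\mathbb{E}}[X_n]\downarrow 0$ on $C_b(\Omega_T)$, which requires combining the intrinsic regularity of finite-dimensional $G$-expectations (from the $G$-heat equation) with a Kolmogorov/Arzelà-Ascoli tightness argument on path space, and this is precisely where the structure of $G$-Brownian motion, rather than any abstract property of sublinear functionals, is used.
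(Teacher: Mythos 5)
This theorem is quoted in the paper with citations to Denis--Hu--Peng and Hu--Peng; the paper gives no proof, so the comparison target is the argument in those references. There the representing family is built \emph{explicitly}: $\mathcal{P}$ is (the weak closure of) the set of laws of $\int_0^\cdot \theta_s\,dW_s$ for a classical Brownian motion $W$ and adapted processes $\theta$ valued in $\Sigma^{1/2}$; the identity $\mathbb{\hat{E}}[\varphi(B_{t_1},\dots,B_{t_n})]=\sup_P\mathbb{E}_P[\varphi(\dots)]$ on $Lip(\Omega_T)$ is proved by dynamic programming together with uniqueness of viscosity solutions of the $G$-heat equation, weak compactness follows from uniform moment bounds for this concrete family, and the extension to $L_G^1(\Omega_T)$ is by density exactly as in your last step (which is correct). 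Your proposal inverts this order: abstract Hahn--Banach duality first, then Daniell--Stone to upgrade the dominated linear functionals to countably additive measures, with the explicit structure of $G$-Brownian motion entering only through a regularity property of $\mathbb{\hat{E}}$. The abstract half of this does appear in Denis--Hu--Peng as a general representation theorem for sublinear expectations satisfying the downward-continuity hypothesis, and your reduction of everything to that hypothesis is the right diagnosis.

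The gap is that the downward-continuity step, which you correctly flag as the crux, is not actually reachable by the argument you sketch. First, $Lip(\Omega_T)$ is \emph{not} dense in $C_b(\Omega_T)$ in the supremum norm (a cylinder function cannot detect a spike inserted between its finitely many time points), and its density in the $\Vert\cdot\Vert_{L_G^1}$ norm is itself a nontrivial theorem of Denis--Hu--Peng proved \emph{after}, and by means of, the representation --- so working "on $C_b(\Omega_T)$ or equivalently $Lip(\Omega_T)$" begs the question. Second, the standard "Dini on a compact set plus a continuous cutoff of the complement" argument cannot be run inside the lattice $Lip(\Omega_T)$ on which $\mathbb{\hat{E}}$ is a priori defined: if $K\subset\Omega_T$ is compact and $\rho$ is a cylinder function with $\rho\geq\mathbf{1}_{K^c}$, then modifying any path off the finitely many times on which $\rho$ depends shows $\rho\geq 1$ essentially everywhere, so $\mathbb{\hat{E}}[\rho]\geq1$ and no small-capacity cutoff exists. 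Reducing to finite-dimensional marginals does not rescue this, because $X_n\downarrow0$ on path space gives no monotone statement about the $\varphi_n$ on the growing spaces $\mathbb{R}^{d\times k_n}$ without first gluing the finite-dimensional representing sets into measures on $\Omega_T$ (Kolmogorov extension plus a continuity modification) --- which is precisely the constructive proof you were trying to avoid. Your Kolmogorov moment bound and the tightness/weak-compactness argument are fine \emph{once the measures exist}, since then $\mathbb{E}_P[|B_t-B_s|^p]\leq\mathbb{\hat{E}}[|B_t-B_s|^p]\leq C|t-s|^{p/2}$ uniformly over $\mathcal{P}$; the unproved step is getting countably additive measures in the first place, and in the literature this is done by exhibiting them, not by verifying the Daniell condition abstractly.
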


Let $(X_{s}^{t,x},Y_{s}^{t,x},Z_{s}^{t,x},K_{s}^{t,x})$, for $s\in \lbrack
t,T],$ be the solution of $(\ref{1.0})$ starting from $t$ with $X_{t}=x$, that
is,
\begin{align}
X_{s}^{t,x}  &  =x+\int_{t}^{s}b\left(  r,X_{r}^{t,x}\right)  dr+\int_{t}%
^{s}h_{ij}\left(  r,X_{r}^{t,x}\right)  d\langle B^{i},B^{j}\rangle_{r}%
+\int_{t}^{s}\sigma \left(  r,X_{r}^{t,x}\right)  dB_{r},\label{G-SDE}\\
Y_{s}^{t,x}  &  =\phi \left(  X_{T}^{t,x}\right)  +\int_{s}^{T}f\left(
r,X_{r}^{t,x},Y_{r}^{t,x},Z_{r}^{t,x}\right)  dr+\int_{s}^{T}g_{ij}\left(
r,X_{r}^{t,x},Y_{r}^{t,x},Z_{r}^{t,x}\right)  d\langle B^{i},B^{j}\rangle
_{r}\label{G-BSDE}\\
&  \text{ \  \ }-\int_{s}^{T}Z_{r}^{t,x}dB_{r}-\left(  K_{T}^{t,x}-K_{s}%
^{t,x}\right)  \text{.}\nonumber
\end{align}

Next, we shall present the nonlinear Feynman-Kac formula under the $G$-framework.

\begin{theorem}
[\cite{HJPS20142}]\label{Nonlinear Feynman-Kac}Assume that the functions
$b,h_{ij},\sigma,f,g_{ij}$, and $\phi$ are uniformly Lipschitz continuous with
respect to $(x,y,z)$ and continuous with respect to $t$, and $h_{ij}=h_{ji}$,
$g_{ij}=g_{ji}$ for $1\leq i,j\leq d$. Let $u(t,x):=Y_{t}^{t,x}$ for
$(t,x) \in \lbrack0,T]\times \mathbb{R}^{m}$.\ Then $u(t,x)$ is the unique
solution of the following PDE:
\begin{equation}
\left \{
\begin{array}
[c]{l}%
\partial_{t}u+F\left(  D_{x}^{2}u,D_{x}u,u,x,t\right)  =0,\\
u\left(  T,x\right)  =\phi \left(  x\right)  ,
\end{array}
\right.  \label{F-K}%
\end{equation}
where%
\begin{align*}
&  F\left(  D_{x}^{2}u,D_{x}u,u,x,t\right)  =G\left(  H\left(  D_{x}%
^{2}u,D_{x}u,u,x,t\right)  \right)  +\left \langle b\left(  t,x\right)
,D_{x}u\right \rangle \\
&  \quad \quad \quad \quad \quad \quad \quad \quad \quad \;\;+f\left(
t,x,u,\left \langle \sigma_{1}\left(  t,x\right)  ,D_{x}u\right \rangle
,\ldots,\left \langle \sigma_{d}\left(  t,x\right)  ,D_{x}u\right \rangle
\right)  ,\\
&  H_{ij}\left(  D_{x}^{2}u,D_{x}u,u,x,t\right)  =\langle D_{x}^{2}u\sigma
_{i}\left(  t,x\right)  ,\sigma_{j}\left(  t,x\right)  \rangle+2\langle
D_{x}u,h_{ij}\left(  t,x\right)  \rangle \\
&  \quad \quad \quad \quad \quad \quad \quad \quad \quad \quad \;\;+2g_{ij}\left(
t,x,u,\langle \sigma_{1}\left(  t,x\right)  ,D_{x}u\rangle,\ldots,\langle
\sigma_{d}\left(  t,x\right)  ,D_{x}u\rangle \right)  .
\end{align*}

\end{theorem}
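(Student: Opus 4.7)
The plan is to show that $u(t,x):=Y_t^{t,x}$ is a viscosity solution of the fully nonlinear PDE $(\ref{F-K})$ and then invoke the comparison principle for such PDEs to get uniqueness. This is the $G$-analogue of Peng's classical nonlinear Feynman--Kac formula, and each step of the classical argument has to be rewritten in the $G$-expectation framework.

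First I would establish the basic regularity of $u$. Standard a priori estimates for the $G$-SDE $(\ref{G-SDE})$, obtained via $G$-It\^o isometry and a Gr\"onwall argument under $\mathbb{\hat{E}}$, yield $\mathbb{\hat{E}}[|X_s^{t,x}-X_s^{t,x'}|^2]\leq C|x-x'|^2$. Combining this with the a priori estimates for $G$-BSDEs from \cite{HJPS20141} gives the Lipschitz continuity of $u(t,\cdot)$ and a $1/2$-H\"older bound in $t$, so in particular $u$ is deterministic and continuous on $[0,T]\times\mathbb{R}^m$. Next I would prove the dynamic programming identity
\[
Y_s^{t,x}=u(s,X_s^{t,x}),\qquad s\in[t,T],
\]
using the flow property $X_r^{t,x}=X_r^{s,X_s^{t,x}}$ of the $G$-SDE together with the uniqueness of the solution to the $G$-BSDE $(\ref{G-BSDE})$; the random-initial-data version is obtained from the deterministic one by a Lipschitz-based approximation.

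The main step is the viscosity property. Let $\varphi\in C^{1,2}$ be such that $u-\varphi$ has a local maximum at $(t,x)$ with $u(t,x)=\varphi(t,x)$. For small $\delta>0$, the DPP gives $Y_{t+\delta}^{t,x}\leq\varphi(t+\delta,X_{t+\delta}^{t,x})$, and plugging this into the $G$-BSDE over $[t,t+\delta]$ while applying $G$-It\^o's formula to $\varphi(\cdot,X_\cdot^{t,x})$ over the same interval, one obtains an inequality that, after taking $\mathbb{\hat{E}}$, discards the $Z_r\,dB_r$ term and produces
\[
0\leq \mathbb{\hat{E}}\!\left[\int_t^{t+\delta}\!\!\Bigl(\partial_t\varphi+\langle b,D_x\varphi\rangle+f(\cdots)\Bigr)dr+\int_t^{t+\delta}\!\!\tfrac{1}{2}H_{ij}(D_x^2\varphi,D_x\varphi,\varphi,X_r,r)\,d\langle B^i,B^j\rangle_r\right]+o(\delta).
\]
Dividing by $\delta$, using the characterization $\mathbb{\hat{E}}[\int_t^{t+\delta}\eta_{ij}(r)\,d\langle B^i,B^j\rangle_r]\approx 2\delta\,G(\eta(t))$ as $\delta\downarrow 0$, and invoking the continuity of all coefficients, one recovers the subsolution inequality $\partial_t\varphi+F(D_x^2\varphi,D_x\varphi,\varphi,x,t)\geq 0$ at $(t,x)$. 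The supersolution inequality follows symmetrically, with the local minimum replacing the local maximum.

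The most delicate technical point is the contribution of the decreasing $G$-martingale $K$: under $\mathbb{\hat{E}}$ it is only a non-increasing $G$-martingale, not a martingale, and this breaks the symmetry that is automatic in the classical BSDE setting. In deriving the supersolution inequality one exploits $\mathbb{\hat{E}}[-(K_{t+\delta}-K_t)]=0$, while in the subsolution inequality one only uses $\mathbb{\hat{E}}[K_{t+\delta}-K_t]\leq 0$; keeping track of this asymmetry together with the sublinearity of $\mathbb{\hat{E}}$ is where the argument genuinely departs from the classical case and is, in my view, the main obstacle. Once $u$ is shown to be a continuous viscosity sub- and supersolution, uniqueness follows from the standard comparison principle for fully nonlinear parabolic PDEs of the Crandall--Ishii--Lions type, applicable here because $F$ is degenerate elliptic (thanks to the monotonicity of $G$) and has Lipschitz dependence on $(u,D_xu,x)$.
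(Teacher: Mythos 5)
The paper itself offers no proof of this theorem: it is imported verbatim from \cite{HJPS20142}, so the only meaningful comparison is with that reference. Its proof does follow the architecture you describe (Lipschitz-in-$x$ and $\tfrac12$-H\"older-in-$t$ estimates for $u$, the identity $Y_s^{t,x}=u(s,X_s^{t,x})$ via the flow property and uniqueness for $G$-BSDEs, the viscosity sub/supersolution property on $[t,t+\delta]$, and uniqueness from a comparison principle for the fully nonlinear parabolic PDE), so your outline is the right one in shape. However, two points in your sketch are genuinely wrong or missing.

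First, the mechanism you single out as ``the main obstacle'' is misstated. For the decreasing $G$-martingale $K$ one has exactly $\mathbb{\hat{E}}_{t}[K_{t+\delta}-K_{t}]=0$ (that is what being a $G$-martingale means), whereas $\mathbb{\hat{E}}_{t}[-(K_{t+\delta}-K_{t})]\geq 0$ and vanishes only in the degenerate case $K\equiv 0$, since a nonnegative random variable with zero sublinear expectation is zero quasi-surely; so the identity you propose to ``exploit'' for the supersolution direction is false, and your bookkeeping is also reversed: the direction that needs the full equality is the local-maximum (subsolution) one, where you must have $Y_{t}\leq\mathbb{\hat{E}}_{t}\bigl[Y_{t+\delta}+\int_{t}^{t+\delta}f\,dr+\int_{t}^{t+\delta}g\,d\langle B\rangle_{r}\bigr]$, i.e. $\mathbb{\hat{E}}_{t}\bigl[\int_{t}^{t+\delta}Z\,dB+\Delta K\bigr]\geq 0$. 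The correct facts are that $\int Z\,dB$ is a symmetric $G$-martingale (so it drops out of $\mathbb{\hat{E}}_{t}$) and $\mathbb{\hat{E}}_{t}[\Delta K]=0$; these are used identically in both directions, and the asymmetry of the problem is carried by the sublinear $G$ inside $F$, not by an asymmetric treatment of $K$. Second, and more substantively, after taking $\mathbb{\hat{E}}_{t}$ the generator is still evaluated at $(Y_{r},Z_{r})$: the $y$-argument is handled by continuity since $Y_{r}=u(r,X_{r}^{t,x})$, but nothing in your argument controls $Z_{r}$, and $f,g$ are Lipschitz in $z$, so you cannot simply replace $Z_{r}$ by $D_{x}\varphi\,\sigma$ and absorb the difference into $o(\delta)$. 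In \cite{HJPS20142} this is precisely where the work lies: one introduces an auxiliary $G$-BSDE on $[t,t+\delta]$ with terminal value $\varphi(t+\delta,X_{t+\delta}^{t,x})$, applies $G$-It\^{o}'s formula to $\varphi$ so that this auxiliary solution has explicit $Z$, and transfers the inequality by the comparison theorem for $G$-BSDEs together with an approximation lemma (the $G$-analogue of Peng's classical argument); the purely local maximum also needs a localization step, using moment estimates for $X^{t,x}$, before $u\leq\varphi$ can be used under $\mathbb{\hat{E}}_{t}$. As written, your passage from the BSDE to the pointwise viscosity inequality does not go through without these ingredients.
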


\begin{remark}
\label{F-K represent}By Theorem \ref{Nonlinear Feynman-Kac}, the solution of
the $G$-FBSDE (\ref{1.0}) can be represented as%
\[%
\begin{array}
[c]{l}%
\displaystyle Y_{t}=u(t,X_{t}),\text{ \ }Z_{t}=D_{x}u(t,X_{t})^{\top}%
\sigma(t,X_{t}),\\
\displaystyle K_{t}=\frac{1}{2}\int_{0}^{t}H_{ij}\left(  D_{x}^{2}%
u(s,X_{s}),D_{x}u(s,X_{s}),u(s,X_{s}),X_{s},s\right)  d\langle B^{i}%
,B^{j}\rangle_{s}\\
\displaystyle \text{ \  \  \  \  \ }-\int_{0}^{t}G\left(  H\left(  D_{x}%
^{2}u(s,X_{s}),D_{x}u(s,X_{s}),u(s,X_{s}),X_{s},s\right)  \right)  ds.
\end{array}
\]

\end{remark}

For readers' convenience, we list the main notations of this paper as follows.

\begin{itemize}
\item $\Delta B_{n+1}:=B_{t_{n+1}}-B_{t_{n}}$,\ $\  \Delta \langle
B\rangle_{n+1}:=\langle B\rangle_{t_{n+1}}-\langle B\rangle_{t_{n}}$;

\item $\mathbb{\hat{E}}_{t}^{x}[\eta]:=\mathbb{\hat{E}}[\eta|X_{t}=x]$, the
conditional $G$-expectation of the random variable $\eta$;

\item $\mathbb{E}_{t}^{\sigma,x}[\eta]:=\mathbb{E}_{P^{\sigma}}[\eta|X_{t}%
=x]$, the conditional expectation of the random variable $\eta$ under the
probability measure $P^{\sigma}\in \mathcal{P}$;

\item $\mathbb{\tilde{E}}_{t}^{x}[\eta]:=\sup \limits_{\sigma \in \{
\underline{\sigma},\overline{\sigma}\}}\mathbb{E}_{t}^{\sigma,x}[\eta]$, the
approximate conditional $G$-expectation of the random variable $\eta$;

\item $\mathbb{\tilde{E}}^{TR}[\varphi(B_{t},\langle B\rangle_{t}%
)]:=\sup \limits_{\sigma \in \{ \underline{\sigma},\overline{\sigma}\}}%
\sum \limits_{i=1}^{3}\omega_{i}^{\sigma}\varphi(\sqrt{t}q_{i},tq_{i}^{2})$,
the trinomial tree rule approximation for the distribution of $\varphi
(B_{t},\langle B\rangle_{t})$ with the weights $\{ \omega_{i}^{\sigma}%
\}_{i=1}^{3}$ and the nodes $\{q_{i}\}_{i=1}^{3}$.
\end{itemize}

\section{Numerical schemes for $G$-FBSDEs}

We first consider the one-dimensional $G$-Brownian motion case. The results
for the multi-dimensional $G$-Brownian motion case will be given in Section
\ref{multi case}. For the time interval $\left[  0,T\right]  $, we introduce a
uniform time partition $0=t_{0}<t_{1}<\cdots<t_{N}=T$ with $\Delta
t=t_{n+1}-t_{n}=T/N$. We use the following Euler scheme to approximate the
$G$-SDE (\ref{G-SDE}):
\begin{equation}
X^{n+1}=X^{n}+b(t_{n},X^{n})\Delta t+h(t_{n},X^{n})\Delta \langle
B\rangle_{n+1}+\sigma(t_{n},X^{n})\Delta B_{n+1}\text{,} \label{X^n}%
\end{equation}
and $X^{0}=x_{0}$, for $n=0,\ldots,N-1$, where $\Delta B_{n+1}\sim N(0,\Delta
t\Sigma)$ and $\Delta \langle B\rangle_{n+1}\sim N(\Delta t\Sigma,0)$ with
$\Sigma=[\underline{\sigma}^{2},\overline{\sigma}^{2}]$.

Let $(X_{t}^{t_{n},X^{n}},Y_{t}^{t_{n},X^{n}},Z_{t}^{t_{n},X^{n}},K_{t}
^{t_{n},X^{n}})$, for $t\in \left[  t_{n},T\right]  $, be the solution of
(\ref{G-SDE})-(\ref{G-BSDE}) with $(t,x)=(t_{n},X^{n})$, and denote
$f_{t}^{t_{n},X^{n}}=f(t,X_{t}^{t_{n},X^{n} },Y_{t}^{t_{n},X^{n}},Z_{t}%
^{t_{n},X^{n}})$ and $g_{t}^{t_{n},X^{n}} =g(t,X_{t}^{t_{n},X^{n}},$
$Y_{t}^{t_{n},X^{n}},Z_{t}^{t_{n},X^{n}})$. Then, for $n=0,1,\ldots,N-1,$
\begin{align}
X_{t_{n+1}}^{t_{n},X^{n}}  &  =X^{n}+\int_{t_{n}}^{t_{n+1}}b(t,X_{t}%
^{t_{n},X^{n}})dt+\int_{t_{n}}^{t_{n+1}}h(t,X_{t}^{t_{n},X^{n}})d\langle
B\rangle_{t}+\int_{t_{n}}^{t_{n+1}}\sigma(t,X_{t}^{t_{n},X^{n}})dB_{t}%
,\nonumber \\
Y_{t_{n}}^{t_{n},X^{n}}  &  =Y_{t_{n+1}}^{t_{n},X^{n}}+\int_{t_{n}}^{t_{n+1}%
}f_{t}^{t_{n},X^{n}}dt+\int_{t_{n}}^{t_{n+1}}g_{t}^{t_{n},X^{n}}d\langle
B\rangle_{t}-\int_{t_{n}}^{t_{n+1}}Z_{t}^{t_{n},X^{n}}dB_{t}-(K_{t_{n+1}%
}^{t_{n},X^{n}}-K_{t_{n}}^{t_{n},X^{n}})\text{.} \label{2.2}%
\end{align}

\subsection{Conditional $G$-expectation approximation}

Taking the conditional $G$-expectation$\  \mathbb{\hat{E}}_{t_{n}}^{X^{n}%
}\left[  \cdot \right]  :=\mathbb{\hat{E}}\left[  \cdot|X_{t_{n}}=X^{n}\right]
$ on (\ref{2.2}) and noting that $K_{t}$ is a $G$-martingale,\ we have
\begin{equation}
Y_{t_{n}}^{t_{n},X^{n}}=\mathbb{\hat{E}}_{t_{n}}^{X^{n}}\left[  Y_{t_{n+1}%
}^{t_{n},X^{n}}+\int_{t_{n}}^{t_{n+1}}f_{t}^{t_{n},X^{n}}dt+\int_{t_{n}%
}^{t_{n+1}}g_{t}^{t_{n},X^{n}}d\langle B\rangle_{t}\right]  . \label{2.3}%
\end{equation}
Using the right rectangle formula in (\ref{2.3}) and approximating the forward
process, we obtain
\begin{align}
Y_{t_{n}}^{t_{n},X^{n}}  &  =\mathbb{\hat{E}}_{t_{n}}^{X^{n}}\left[
Y_{t_{n+1}}^{t_{n},X^{n}}+f_{t_{n+1}}^{t_{n},X^{n}}\Delta t+g_{t_{n+1}}%
^{t_{n},X^{n}}\Delta \langle B\rangle_{n+1}\right]  +\hat{R}_{y_{1}}^{n}\\
&  =\mathbb{\hat{E}}_{t_{n}}^{X^{n}}\left[  Y_{t_{n+1}}^{t_{n+1},X^{n+1}%
}+f_{t_{n+1}}^{t_{n+1},X^{n+1}}\Delta t+g_{t_{n+1}}^{t_{n+1},X^{n+1}}%
\Delta \langle B\rangle_{n+1}\right]  +\hat{R}_{y_{1}}^{n}+\hat{R}_{y_{2}}%
^{n},\nonumber
\end{align}
where
\begin{align}
&  \hat{R}_{y_{1}}^{n}=\mathbb{\hat{E}}_{t_{n}}^{X^{n}}\left[  Y_{t_{n+1}%
}^{t_{n},X^{n}}+%
{\textstyle \int_{t_{n}}^{t_{n+1}}}
f_{t}^{t_{n},X^{n}}dt+%
{\textstyle \int_{t_{n}}^{t_{n+1}}}
g_{t}^{t_{n},X^{n}}d\langle B\rangle_{t}\right]  \quad \quad \quad \label{2.5}\\
&  \text{ \  \  \  \  \  \  \ }-\mathbb{\hat{E}}_{t_{n}}^{X^{n}}\left[  Y_{t_{n+1}%
}^{t_{n},X^{n}}+f_{t_{n+1}}^{t_{n},X^{n}}\Delta t+g_{t_{n+1}}^{t_{n},X^{n}%
}\Delta \langle B\rangle_{n+1}\right]  ,\nonumber
\end{align}
and%
\begin{align}
&  \hat{R}_{y_{2}}^{n}=\mathbb{\hat{E}}_{t_{n}}^{X^{n}}\left[  Y_{t_{n+1}%
}^{t_{n},X^{n}}+f_{t_{n+1}}^{t_{n},X^{n}}\Delta t+g_{t_{n+1}}^{t_{n},X^{n}%
}\Delta \langle B\rangle_{n+1}\right] \label{2.6}\\
&  \text{ \  \  \  \  \  \  \ }-\mathbb{\hat{E}}_{t_{n}}^{X^{n}}\left[  Y_{t_{n+1}%
}^{t_{n+1},X^{n+1}}+f_{t_{n+1}}^{t_{n+1},X^{n+1}}\Delta t+g_{t_{n+1}}%
^{t_{n+1},X^{n+1}}\Delta \langle B\rangle_{n+1}\right]  .\nonumber
\end{align}
For any $\eta \in L_{G}^{1}(\Omega_{T})$, define the approximate conditional
$G$-expectation $\mathbb{\tilde{E}}_{t_{n}}^{X^{n}}[\eta]$ by
\begin{equation}
\mathbb{\tilde{E}}_{t_{n}}^{X^{n}}[\eta]:=\sup \limits_{\sigma \in \left \{
\underline{\sigma},\overline{\sigma}\right \}  }\mathbb{E}_{t_{n}}%
^{\sigma,X^{n}}\left[  \eta \right]  :=\sup_{\sigma \in \left \{  \underline
{\sigma},\overline{\sigma}\right \}  }\mathbb{E}_{P^{\sigma}}\left[  \left.
\eta \right \vert X_{t_{n}}=X^{n}\right]  , \label{E^wan}%
\end{equation}
where $P^{\sigma}\in \mathcal{P}$ is a probability measure,
under which $(B_{t})_{t\geq0}$ is the classical Brownian motion with
$\mathbb{E}_{P^{\sigma}}[B_{t}]=0$\ and $\mathbb{E}_{P^{\sigma}}[B_{t}%
^{2}]=\sigma^{2}t$. Then we have
\begin{equation}
Y_{t_{n}}^{t_{n},X^{n}}=\mathbb{\tilde{E}}_{t_{n}}^{X^{n}}\left[  Y_{t_{n+1}%
}^{t_{n+1},X^{n+1}}+f_{t_{n+1}}^{t_{n+1},X^{n+1}}\Delta t+g_{t_{n+1}}%
^{t_{n+1},X^{n+1}}\Delta \langle B\rangle_{n+1}\right]  +\tilde{R}_{y}^{n},
\label{2.7}%
\end{equation}
where $\tilde{R}_{y}^{n}=\hat{R}_{y_{1}}^{n}+\hat{R}_{y_{2}}^{n}+\tilde
{R}_{y_{3}}^{n}$, with
\begin{align}
\tilde{R}_{y_{3}}^{n}  &  =\mathbb{\hat{E}}_{t_{n}}^{X^{n}}\left[  Y_{t_{n+1}%
}^{t_{n+1},X^{n+1}}+f_{t_{n+1}}^{t_{n+1},X^{n+1}}\Delta t+g_{t_{n+1}}%
^{t_{n+1},X^{n+1}}\Delta \langle B\rangle_{n+1}\right] \label{2.8}\\
&  \text{ \  \ }-\mathbb{\tilde{E}}_{t_{n}}^{X^{n}}\left[  Y_{t_{n+1}}%
^{t_{n+1},X^{n+1}}+f_{t_{n+1}}^{t_{n+1},X^{n+1}}\Delta t+g_{t_{n+1}}%
^{t_{n+1},X^{n+1}}\Delta \langle B\rangle_{n+1}\right]  .\nonumber
\end{align}

Now let us multiply both sides of (\ref{2.2}) by $\Delta B_{n+1}$ and take the
conditional expectation $\mathbb{E}_{t_{n}}^{\sigma,X^{n}}\left[
\cdot \right]  $, for any $\sigma \in \left \{  \underline{\sigma},\overline
{\sigma}\right \}  $,\ then we obtain
\begin{equation}
\mathbb{E}_{t_{n}}^{\sigma,X^{n}}\left[  \int_{t_{n}}^{t_{n+1}}Z_{t}%
^{t_{n},X^{n}}dB_{t}\Delta B_{n+1}\right]  =\mathbb{E}_{t_{n}}^{\sigma,X^{n}%
}\left[  Y_{t_{n+1}}^{t_{n+1},X^{n+1}}\Delta B_{n+1}\right]  +R_{z_{1}%
}^{n,\sigma}+R_{z_{2}}^{n,\sigma}, \label{2.9}%
\end{equation}
where%
\begin{align}
R_{z_{1}}^{n,\sigma}  &  =\mathbb{E}_{t_{n}}^{\sigma,X^{n}}\left[
{\textstyle \int_{t_{n}}^{t_{n+1}}}
f_{t}^{t_{n},X^{n}}dt\Delta B_{n+1}+%
{\textstyle \int \nolimits_{t_{n}}^{t_{n+1}}}
g_{t}^{t_{n},X^{n}}d\langle B\rangle_{t}\Delta B_{n+1}\right] \label{2.10}\\
&  \text{ \  \ }-\mathbb{E}_{t_{n}}^{\sigma,X^{n}}\left[  (K_{t_{n+1}}%
^{t_{n},X^{n}}-K_{t_{n}}^{t_{n},X^{n}})\Delta B_{n+1}\right]  ,\nonumber \\
R_{z_{2}}^{n,\sigma}  &  =\mathbb{E}_{t_{n}}^{\sigma,X^{n}}\left[  Y_{t_{n+1}%
}^{t_{n},X^{n}}\Delta B_{n+1}\right]  -\mathbb{E}_{t_{n}}^{\sigma,X^{n}%
}\left[  Y_{t_{n+1}}^{t_{n+1},X^{n+1}}\Delta B_{n+1}\right]  . \label{2.11}%
\end{align}
Noting that $B_{t}\sim N(0,\sigma^{2}t)$ under $\mathbb{E}^{\sigma}$, by the
It\^{o} isometry formula, we have%
\begin{equation}
\mathbb{E}_{t_{n}}^{\sigma,X^{n}}\left[  \int_{t_{n}}^{t_{n+1}}Z_{t}%
^{t_{n},X^{n}}dB_{t}\Delta B_{n+1}\right]  =\sigma^{2}\mathbb{E}_{t_{n}%
}^{\sigma,X^{n}}\left[  \int_{t_{n}}^{t_{n+1}}Z_{t}^{t_{n},X^{n}}dt\right]  .
\label{2.12}%
\end{equation}
Combining (\ref{2.9}) and (\ref{2.12}), for $\sigma \in \{ \underline{\sigma
},\overline{\sigma}\}$, we have
\begin{equation}
\sigma^{2}\Delta tZ_{t_{n}}^{t_{n},X^{n}}=\mathbb{E}_{t_{n}}^{\sigma,X^{n}%
}\left[  Y_{t_{n+1}}^{t_{n+1},X^{n+1}}\Delta B_{n+1}\right]  +\tilde{R}%
_{z}^{n,\sigma}\text{,} \label{2.13}%
\end{equation}
where $\tilde{R}_{z}^{n,\sigma}=R_{z_{1}}^{n,\sigma}+R_{z_{2}}^{n,\sigma
}-R_{z_{3}}^{n,\sigma}$ and
\begin{equation}
R_{z_{3}}^{n,\sigma}=\sigma^{2}\mathbb{E}_{t_{n}}^{\sigma,X^{n}}\left[
{\textstyle \int_{t_{n}}^{t_{n+1}}}
Z_{t}^{t_{n},X^{n}}dt\right]  -\sigma^{2}\Delta tZ_{t_{n}}^{t_{n},X^{n}}.
\label{2.14}%
\end{equation}
Thus we obtain
\begin{align}
&  Y_{t_{n}}^{t_{n},X^{n}}=\mathbb{\tilde{E}}_{t_{n}}^{X^{n}}\left[
Y_{t_{n+1}}^{t_{n+1},X^{n+1}}+f_{t_{n+1}}^{t_{n+1},X^{n+1}}\Delta
t+g_{t_{n+1}}^{t_{n+1},X^{n+1}}\Delta \langle B\rangle_{n+1}\right]  +\tilde
{R}_{y}^{n},\label{ref1}\\
&  \sigma^{2}\Delta tZ_{t_{n}}^{t_{n},X^{n}}=\mathbb{E}_{t_{n}}^{\sigma,X^{n}
}\left[  Y_{t_{n+1}}^{t_{n+1},X^{n+1}}\Delta B_{n+1}\right]  +\tilde{R}
_{z}^{n,\sigma}, \label{ref2}%
\end{align}
for any $\sigma \in \{  \underline{\sigma},\overline{\sigma}\}  $.

\subsection{Trinomial tree rule\label{Subection Appro G}}

Notice that $Y_{t_{n+1}}^{t_{n+1},X^{n+1}}$, $f_{t_{n+1}}^{t_{n+1},X^{n+1}}$,
$g_{t_{n+1}}^{t_{n+1},X^{n+1}}\Delta \langle B\rangle_{n+1}$, and $Y_{t_{n+1}
}^{t_{n+1},X^{n+1}}\Delta B_{n+1}$ in (\ref{ref1})-(\ref{ref2}) are the
functions of $X^{n+1}$, $\Delta B_{n+1}$, and $\Delta \langle B\rangle_{n+1}$.
For a function $\varphi:\mathbb{R}^{m}\times \mathbb{R}\times \mathbb{R}%
\rightarrow \mathbb{R}$, denote $\varphi_{t_{n+1}}=\varphi(X^{n+1},\Delta
B_{n+1},$ $\Delta \langle B\rangle_{n+1})$, we define the following trinomial
tree rule to approximate $\mathbb{\tilde{E}}_{t_{n}}^{X^{n}}[\varphi_{t_{n+1}%
}]$:
\begin{equation}
\mathbb{\tilde{E}}_{t_{n}}^{TR,X^{n}}\left[  \varphi_{t_{n+1}}\right]
:=\sup \limits_{\sigma \in \left \{  \underline{\sigma},\overline{\sigma}\right \}
}\mathbb{E}_{t_{n}}^{TR,\sigma,X^{n}}\left[  \varphi_{t_{n+1}}\right]
:=\sup \limits_{\sigma \in \left \{  \underline{\sigma},\overline{\sigma}\right \}
}\sum_{i=1}^{3}\omega_{i}^{\sigma}\tilde{\varphi}\left(  X^{n},\sqrt{\Delta
t}q_{i},\Delta tq_{i}^{2}\right)  , \label{E TR}%
\end{equation}
where $\tilde{\varphi}\left(  x,y,z\right)  =\varphi \left(  x+b(t_{n},x)\Delta
t+\sigma(t_{n},x)y+h(t_{n},x)z,y,z\right)  $ and
\begin{equation}
\left \{
\begin{array}
[c]{ll}%
q_{1}=-1, & \omega_{1}^{\sigma}=\sigma^{2}/2;\\
q_{2}=0, & \omega_{2}^{\sigma}=1-\sigma^{2};\\
q_{3}=1, & \omega_{3}^{\sigma}=\sigma^{2}/2.
\end{array}
\right.  \label{qi}%
\end{equation}
More precisely, we define the associated discrete sublinear expectation
\[
\mathbb{\tilde{E}}^{TR}\left[  \varphi_{t_{n+1}}\right]  =\mathbb{\tilde{E}%
}_{t_{0}}^{TR,x_{0}}\left[  \mathbb{\tilde{E}}_{t_{1}}^{TR,X^{1}}\left[
\cdots \mathbb{\tilde{E}}_{t_{n}}^{TR,X^{n}}[\varphi_{t_{n+1}}]\right]
\right]  .
\]

\begin{remark}
\label{remark2}From the definition of $\{ \omega_{i}^{\sigma}\}_{i=1}^{3}$ and
$\{q_{i}\}_{i=1}^{3}$, one can check that
\[
\sum \limits_{i=1}^{3}\omega_{i}^{\sigma}=1,\text{ \ }\sum \limits_{i=1}%
^{3}\omega_{i}^{\sigma}q_{i}=0,\text{ \ }\sum \limits_{i=1}^{3}\omega
_{i}^{\sigma}q_{i}^{2}=\sigma^{2}.
\]

\end{remark}

It is easy to verify the following properties.

\begin{proposition}
\label{PropE}Assume that $\varphi_{t_{n+1}}=\varphi \left(  X^{n+1},\Delta
B_{n+1},\Delta \langle B\rangle_{n+1}\right)  .$ Then

\begin{enumerate}
\item[$\left(  a\right)  $] $\mathbb{\tilde{E}}^{TR}\left[  \mathbb{\tilde{E}%
}_{t_{n}}^{TR,X^{n}}\left[  \varphi_{t_{n+1}}\right]  \right]  =\mathbb{\tilde
{E}}^{TR}\left[  \varphi_{t_{n+1}}\right]  ;$

\item[$\left(  b\right)  $] $\left \vert \mathbb{E}_{t_{n}}^{TR,\sigma,X^{n}%
}\left[  \varphi_{t_{n+1}}\right]  \right \vert ^{2}\leq \mathbb{E}_{t_{n}%
}^{TR,\sigma,X^{n}}\left[  |\varphi_{t_{n+1}}|^{2}\right]  ;$

\item[$\left(  c\right)  $] $\left \vert \mathbb{E}_{t_{n}}^{TR,\sigma,X^{n}%
}\left[  \varphi_{t_{n+1}}\Delta B_{n+1}\right]  \right \vert ^{2}\leq \left(
\mathbb{E}_{t_{n}}^{TR,\sigma,X^{n}}\left[  |\varphi_{t_{n+1}}|^{2}\right]
-\left \vert \mathbb{E}_{t_{n}}^{TR,\sigma,X^{n}}\left[  \varphi_{t_{n+1}%
}\right]  \right \vert ^{2}\right)  \sigma^{2}\Delta t.$
\end{enumerate}
\end{proposition}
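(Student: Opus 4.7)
The plan is to treat the three parts separately, since each reduces to an elementary statement about a three-point convex combination. The only thing common to all three is the observation from Remark~\ref{remark2}: for each fixed $\sigma\in\{\underline{\sigma},\overline{\sigma}\}$ the weights $\{\omega_i^\sigma\}_{i=1}^{3}$ are nonnegative (assuming $\bar\sigma^2\le 1$, which is implicit in the trinomial tree construction) and sum to $1$, so $\mathbb{E}_{t_n}^{TR,\sigma,X^n}[\,\cdot\,]$ is a genuine expectation under a discrete probability measure on three atoms, and moreover $\sum_i\omega_i^\sigma q_i=0$, $\sum_i\omega_i^\sigma q_i^2=\sigma^2$.

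For part $(a)$ I would simply unroll the definition. Because $\mathbb{\tilde{E}}_{t_n}^{TR,X^n}[\varphi_{t_{n+1}}]$ is obtained by summing $\tilde\varphi(X^n,\sqrt{\Delta t}q_i,\Delta t q_i^2)$ against $\omega_i^\sigma$ and taking $\sup_\sigma$, it is a deterministic function of $X^n$ alone. Substituting this function (call it $\psi(X^n)$) into the definition
\[
\mathbb{\tilde{E}}^{TR}[\psi(X^n)]
=\mathbb{\tilde{E}}_{t_0}^{TR,x_0}\bigl[\mathbb{\tilde{E}}_{t_1}^{TR,X^1}[\cdots \mathbb{\tilde{E}}_{t_{n-1}}^{TR,X^{n-1}}[\psi(X^n)]]\bigr]
\]
recovers exactly the nested expression $\mathbb{\tilde{E}}^{TR}[\varphi_{t_{n+1}}]$ in the definition.

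For part $(b)$, I would apply Jensen's inequality (equivalently, Cauchy-Schwarz with weights $\omega_i^\sigma$) to the convex function $x\mapsto x^2$. Writing $a_i:=\tilde\varphi(X^n,\sqrt{\Delta t}q_i,\Delta t q_i^2)$, we have $\bigl|\sum_i\omega_i^\sigma a_i\bigr|^2\le \sum_i\omega_i^\sigma a_i^2$, which is precisely $|\mathbb{E}_{t_n}^{TR,\sigma,X^n}[\varphi_{t_{n+1}}]|^2\le \mathbb{E}_{t_n}^{TR,\sigma,X^n}[|\varphi_{t_{n+1}}|^2]$.

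Part $(c)$ is the only estimate requiring more than a one-line invocation, but it is still a standard conditional-variance argument. The key step is to center: since $\mathbb{E}_{t_n}^{TR,\sigma,X^n}[\Delta B_{n+1}]=\sqrt{\Delta t}\sum_i\omega_i^\sigma q_i=0$, for any constant $c$ (i.e.\ any $\mathcal{F}_{t_n}$-measurable quantity) one has
\[
\mathbb{E}_{t_n}^{TR,\sigma,X^n}[\varphi_{t_{n+1}}\Delta B_{n+1}]
=\mathbb{E}_{t_n}^{TR,\sigma,X^n}[(\varphi_{t_{n+1}}-c)\Delta B_{n+1}].
\]
Taking $c=\mathbb{E}_{t_n}^{TR,\sigma,X^n}[\varphi_{t_{n+1}}]$ and applying Cauchy-Schwarz for the probability measure $\{\omega_i^\sigma\}$ yields
\[
\bigl|\mathbb{E}_{t_n}^{TR,\sigma,X^n}[\varphi_{t_{n+1}}\Delta B_{n+1}]\bigr|^2
\le \mathbb{E}_{t_n}^{TR,\sigma,X^n}\!\bigl[(\varphi_{t_{n+1}}-c)^2\bigr]\;\mathbb{E}_{t_n}^{TR,\sigma,X^n}\!\bigl[(\Delta B_{n+1})^2\bigr].
\]
Expanding the first factor gives $\mathbb{E}[\varphi^2]-c^2$, while the second equals $\Delta t\sum_i\omega_i^\sigma q_i^2=\sigma^2\Delta t$ by Remark~\ref{remark2}, which is exactly the required bound. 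No step presents a real obstacle; the main care needed is bookkeeping to recognize $\mathbb{E}_{t_n}^{TR,\sigma,X^n}$ as a weighted sum over three deterministic nodes, after which $(b)$ and $(c)$ are classical inequalities and $(a)$ is just the defining tower.
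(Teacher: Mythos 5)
Your proof is correct; the paper itself offers no argument for this proposition (it is stated as "easy to verify"), and your verification — (a) by unrolling the nested definition of $\mathbb{\tilde{E}}^{TR}$, (b) by Jensen's inequality for the three-point weights, and (c) by centering $\varphi_{t_{n+1}}$ at its discrete mean, using $\sum_i\omega_i^\sigma q_i=0$, and applying Cauchy--Schwarz together with $\Delta t\sum_i\omega_i^\sigma q_i^2=\sigma^2\Delta t$ — is exactly the standard argument the authors leave to the reader. Your remark that parts (b) and (c) genuinely require $\omega_2^\sigma=1-\sigma^2\geq 0$, i.e.\ $\bar{\sigma}^2\leq 1$ (an assumption the paper never states but satisfies in all its examples), is a worthwhile observation: without it the weights do not form a probability measure and both inequalities can fail.
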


From the definitions of $\mathbb{\tilde{E}}_{t_{n}}^{TR,X^{n}}\left[
\cdot \right]  $ and $\mathbb{E}_{t_{n}}^{TR,\sigma,X^{n}}\left[  \cdot \right]
$, we have the following approximations:%
\begin{align}
&  \mathbb{\tilde{E}}_{t_{n}}^{X^{n}}\left[  Y_{t_{n+1}}^{t_{n+1},X^{n+1}%
}+f_{t_{n+1}}^{t_{n+1},X^{n+1}}\Delta t+g_{t_{n+1}}^{t_{n+1},X^{n+1}}%
\Delta \langle B\rangle_{n+1}\right] \label{3.9}\\
&  =\mathbb{\tilde{E}}_{t_{n}}^{TR,X^{n}}\left[  Y_{t_{n+1}}^{t_{n+1},X^{n+1}%
}+f_{t_{n+1}}^{t_{n+1},X^{n+1}}\Delta t+g_{t_{n+1}}^{t_{n+1},X^{n+1}}%
\Delta \langle B\rangle_{n+1}\right]  +R_{y}^{T,n},\nonumber
\end{align}
and%
\begin{equation}
\mathbb{E}_{t_{n}}^{\sigma,X^{n}}\left[  Y_{t_{n+1}}^{t_{n+1},X^{n+1}}\Delta
B_{n+1}\right]  =\mathbb{E}_{t_{n}}^{TR,\sigma,X^{n}}\left[  Y_{t_{n+1}%
}^{t_{n+1},X^{n+1}}\Delta B_{n+1}\right]  +R_{z}^{T,n,\sigma}, \label{3.10}%
\end{equation}
where the errors%
\begin{align}
&  R_{y}^{T,n}=\mathbb{\tilde{E}}_{t_{n}}^{X^{n}}\left[  Y_{t_{n+1}}%
^{t_{n+1},X^{n+1}}+f_{t_{n+1}}^{t_{n+1},X^{n+1}}\Delta t+g_{t_{n+1}}%
^{t_{n+1},X^{n+1}}\Delta \langle B\rangle_{n+1}\right] \label{3.7}\\
&  \text{ \  \  \  \  \  \  \  \ }-\mathbb{\tilde{E}}_{t_{n}}^{TR,X^{n}}\left[
Y_{t_{n+1}}^{t_{n+1},X^{n+1}}+f_{t_{n+1}}^{t_{n+1},X^{n+1}}\Delta
t+g_{t_{n+1}}^{t_{n+1},X^{n+1}}\Delta \langle B\rangle_{n+1}\right]
,\nonumber \\
&  R_{z}^{T,n,\sigma}=\mathbb{E}_{t_{n}}^{\sigma,X^{n}}\left[  Y_{t_{n+1}%
}^{t_{n+1},X^{n+1}}\Delta B_{n+1}\right]  -\mathbb{E}_{t_{n}}^{TR,\sigma
,X^{n}}\left[  Y_{t_{n+1}}^{t_{n+1},X^{n+1}}\Delta B_{n+1}\right]  .
\label{3.8}%
\end{align}
Based on (\ref{ref1}), (\ref{ref2}), (\ref{3.9}),$\ $and (\ref{3.10}), for
$\sigma \in \left \{  \underline{\sigma},\overline{\sigma}\right \}  $, we obtain
the following reference equations
\begin{align}
&  Y_{t_{n}}^{t_{n},X^{n}}=\mathbb{\tilde{E}}_{t_{n}}^{TR,X^{n}}\left[
Y_{t_{n+1}}^{t_{n+1},X^{n+1}}+f_{t_{n+1}}^{t_{n+1},X^{n+1}}\Delta
t+g_{t_{n+1}}^{t_{n+1},X^{n+1}}\Delta \langle B\rangle_{n+1}\right]  +\tilde
{R}_{y}^{n}+R_{y}^{T,n},\label{3.5}\\
&  \sigma^{2}\Delta tZ_{t_{n}}^{t_{n},X^{n}}=\mathbb{E}_{t_{n}}^{TR,\sigma
,X^{n}}\left[  Y_{t_{n+1}}^{t_{n+1},X^{n+1}}\Delta B_{n+1}\right]  +\tilde
{R}_{z}^{n,\sigma}+R_{z}^{T,n,\sigma}. \label{3.6}%
\end{align}

\subsection{The discrete scheme}

Let $Y^{n}$ and $Z^{n}$ be the numerical approximations for the solutions
$Y_{t}$ and $Z_{t}$ of the $G$-FBSDE (\ref{1.0}) at time $t_{n}$,
respectively, and denote $f^{n+1}=f(t_{n+1},X^{n+1},Y^{n+1},Z^{n+1})$,
$g^{n+1}=g(t_{n+1},X^{n+1},Y^{n+1},Z^{n+1})$. Based on the reference equations
(\ref{3.5})-(\ref{3.6}), we propose the following numerical scheme for solving
the $G$-FBSDE (\ref{1.0}):

\begin{sch}
\label{scheme 1}Given random variables $Y^{N}$ and $Z^{N}$, for $n=N-1,\ldots
,0$ and $\sigma \in \{ \underline{\sigma},\overline{\sigma}\}$, solve random
variables $Y^{n}=Y^{n}(X^{n}) $ and $Z^{n}=Z^{n}(X^{n})$ from
\begin{align}
Y^{n}  &  =\mathbb{\tilde{E}}_{t_{n}}^{TR,X^{n}}\left[  Y^{n+1}+f^{n+1}\Delta
t+g^{n+1}\Delta \langle B\rangle_{n+1}\right]  ,\label{yn_tr}\\
Z^{n}  &  =\mathbb{E}_{t_{n}}^{TR,\sigma,X^{n}}\left[  Y^{n+1}\Delta
B_{n+1}\right]  /\sigma^{2}\Delta t, \label{zn_tr}%
\end{align}
with%
\begin{equation}
X^{n+1}=X^{n}+b(t_{n},X^{n})\Delta t+h(t_{n},X^{n})\Delta \langle
B\rangle_{n+1}+\sigma(t_{n},X^{n})\Delta B_{n+1}. \label{xn_Tri}%
\end{equation}

\end{sch}

Noticed that when solving for $Z^{n}$, $\mathbb{E}_{t_{n}}^{TR,\sigma,X^{n}%
}[\cdot]$ in (\ref{zn_tr}) can be selected arbitrarily among $\sigma \in \{
\underline{\sigma},\overline{\sigma}\}$. Besides, the parameter $\sigma$ that
$\mathbb{\tilde{E}}_{t_{n}}^{TR,X^{n}}[\cdot]$ in (\ref{yn_tr})\ reaches its
maximum can be obtained in the numerical test, denoted as $\sigma^{n}$. So one
natural choice for solving $Z^{n}$ is to let $\sigma=$ $\sigma^{n}$. In this
case, Scheme \ref{scheme 1} becomes

\begin{sch}
\label{trinomial tree scheme}Given random variables $Y^{N}$ and $Z^{N}$, for
$n=N-1,\ldots,0$, solve random variables $Y^{n}=Y^{n}(X^{n}) $ and
$Z^{n}=Z^{n}(X^{n}) $ from
\begin{align}
Y^{n}  &  =\mathbb{E}_{t_{n}}^{TR,\sigma^{n},X^{n}}\left[  Y^{n+1}%
+f^{n+1}\Delta t+g^{n+1}\Delta \langle B\rangle_{n+1}\right]  ,\label{Yn_Tri}\\
Z^{n}  &  =\mathbb{E}_{t_{n}}^{TR,\sigma^{n},X^{n}}\left[  Y^{n+1}\Delta
B_{n+1}\right]  /(\sigma^{n})^{2}\Delta t, \label{Zn_Tri}%
\end{align}
with%
\begin{equation}
X^{n+1}=X^{n}+b(t_{n},X^{n})\Delta t+h(t_{n},X^{n})\Delta \langle
B\rangle_{n+1}+\sigma(t_{n},X^{n})\Delta B_{n+1}. \label{Xn_Tri}%
\end{equation}

\end{sch}

In fact, Scheme \ref{scheme 1} is independent of the parameter $\sigma$ when
solving $Z^{n}$ (see Lemma \ref{lemma1}). Therefore, Scheme \ref{scheme 1} and
Scheme \ref{trinomial tree scheme} are essentially the same. In order to
simplify the presentation, in the following, we will only perform the
convergence analysis for Scheme \ref{trinomial tree scheme}.

\section{Convergence analysis}

In this section, we focus on the convergence analysis of our discrete schemes.
In the sequel, $C$ represents a generic constant which does not depend on the
time partition and may be different from line to line.

For $n=0,1,\ldots,N-1$, assume that $\mathbb{\tilde{E}}_{t_{n}}^{TR,X^{n}}$ in
(\ref{3.5}) reaches its maximum at $\mathbb{E}_{t_{n}}^{TR,\tilde{\sigma}
^{n},X^{n}}$ with the parameter $\tilde{\sigma}^{n}$ and let $\sigma
=\tilde{\sigma}^{n}$ in (\ref{3.6}), that is,%
\begin{align}
&  Y_{t_{n}}^{t_{n},X^{n}}=\mathbb{E}_{t_{n}}^{TR,\tilde{\sigma}^{n},X^{n}%
}\left[  Y_{t_{n+1}}^{t_{n+1},X^{n+1}}+f_{t_{n+1}}^{t_{n+1},X^{n+1}}\Delta
t+g_{t_{n+1}}^{t_{n+1},X^{n+1}}\Delta \langle B\rangle_{n+1}\right]  +\tilde
{R}_{y}^{n}+R_{y}^{G,n},\label{4.1}\\
&  (\tilde{\sigma}^{n})^{2}\Delta tZ_{t_{n}}^{t_{n},X^{n}}=\mathbb{E}_{t_{n}%
}^{TR,\tilde{\sigma}^{n},X^{n}}\left[  Y_{t_{n+1}}^{t_{n+1},X^{n+1}}\Delta
B_{n+1}\right]  +\tilde{R}_{z}^{n,\tilde{\sigma}^{n}}+R_{z}^{G,n,\tilde
{\sigma}^{n}}. \label{4.10}%
\end{align}
Define the related process $\tilde{Z}^{n}$ as follows
\begin{equation}
\tilde{Z}^{n}=\mathbb{E}_{t_{n}}^{TR,\tilde{\sigma}^{n},X^{n}}\left[
Y^{n+1}\Delta B_{n+1}\right]  /(\tilde{\sigma}^{n})^{2}\Delta t. \label{Z^wan}%
\end{equation}

\subsection{A useful theorem}

We now present an important theorem that will be useful in our convergence analysis.

\begin{theorem}
\label{theorem 1} Let $(X_{t}^{t_{n},X^{n}},Y_{t}^{t_{n},X^{n}},Z_{t}%
^{t_{n},X^{n}})_{t_{n}\leq t\leq T}$ and $(X^{n},Y^{n},Z^{n})(n=0,1,\ldots,N)$
be the solutions of \eqref{G-SDE}-\eqref{G-BSDE} and Scheme
\ref{trinomial tree scheme}, respectively. Assume that the functions $f$ and
$g$ are Lipschitz continuous with respect to $(x,y,z)$. Then, for sufficiently
small $\Delta t$ and $n=0,1,\ldots,N-1$,
\begin{align}
&  \mathbb{\tilde{E}}^{TR}\left[  |Y_{t_{n}}^{t_{n},X^{n}}-Y^{n}|^{2}+C\Delta
t|Z_{t_{n}}^{t_{n},X^{n}}-Z^{n}|^{2}\right] \label{theorem1}\\
&  \leq e^{CT}\mathbb{\tilde{E}}^{TR}\left[  |Y_{t_{N}}^{t_{N},X^{N}}%
-Y^{N}|^{2}+C\Delta t|Z_{t_{N}}^{t_{N},X^{N}}-Z^{N}|^{2}\right] \nonumber \\
&  \text{ \  \ }+C\Delta t\sum \limits_{i=n}^{N-1}\mathbb{\tilde{E}}^{TR}\left[
|\tilde{Z}^{i}-Z^{i}|^{2}\right]  +\frac{C}{\Delta t}\sum \limits_{i=n}%
^{N-1}\mathbb{\tilde{E}}^{TR}\mathbb{[}|\tilde{R}_{y}^{i}|^{2}+|R_{y}%
^{G,i}|^{2}+|\tilde{R}_{z}^{i}|^{2}+|R_{z}^{G,i}|^{2}],\  \nonumber
\end{align}
where $\tilde{Z}^{n}$ is defined in \eqref{Z^wan},
$\tilde{R}_{z}^{n}=\tilde{R}_{z}^{n,\sigma^{n}}\vee \tilde{R}_{z}
^{n,\tilde{\sigma}^{n}}$, $R_{z}^{T,n}=R_{z}^{T,n,\sigma^{n}}\vee
R_{z}^{T,n,\tilde{\sigma}^{n}}$, and $\tilde{R}_{y}^{n},R_{y}^{T,n},\tilde
{R}_{z}^{n,\sigma}$, $R_{z}^{T,n,\sigma}$ are defined in \eqref{2.7}, \eqref{3.7}, \eqref{2.13}
and \eqref{3.8}, respectively.
\end{theorem}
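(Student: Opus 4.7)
The plan is to derive a one-step recursive inequality
\[
\mathbb{\tilde E}^{TR}\!\left[|Y_{t_n}^{t_n,X^n}-Y^n|^2+C\Delta t|Z_{t_n}^{t_n,X^n}-Z^n|^2\right]
\le (1+C\Delta t)\,\mathbb{\tilde E}^{TR}\!\left[|Y_{t_{n+1}}^{t_{n+1},X^{n+1}}-Y^{n+1}|^2+C\Delta t|Z_{t_{n+1}}^{t_{n+1},X^{n+1}}-Z^{n+1}|^2\right]+\mathcal R_n,
\]
where $\mathcal R_n$ collects the local truncation, $G$-expectation discretization, and auxiliary $\tilde Z^n$ errors, and then iterate via a discrete Gr\"onwall argument using $(1+C\Delta t)^{N-n}\le e^{CT}$. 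The tower property, Proposition \ref{PropE}(a), lets us turn the conditional-level estimates into unconditional ones.

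For the $Y$-component I would subtract the scheme (\ref{Yn_Tri}) from the reference (\ref{4.1}). Since $\tilde\sigma^n$ is the arg-max in the reference and $\sigma^n$ is the arg-max for the scheme, the elementary bound $|\sup_\sigma a_\sigma-\sup_\sigma b_\sigma|\le \sup_\sigma|a_\sigma-b_\sigma|$ gives
\[
|Y_{t_n}^{t_n,X^n}-Y^n|\le \max_{\sigma\in\{\underline\sigma,\overline\sigma\}}\bigl|\mathbb E_{t_n}^{TR,\sigma,X^n}[\Delta A^{n+1}]\bigr|+|\tilde R_y^n|+|R_y^{G,n}|,
\]
with $\Delta A^{n+1}$ the difference of the bracket quantities in (\ref{Yn_Tri}) and (\ref{4.1}). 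Squaring with Young's inequality $(a+b)^2\le(1+\gamma\Delta t)a^2+(1+\tfrac1{\gamma\Delta t})b^2$, invoking the Jensen-type estimate Proposition \ref{PropE}(b), and using the Lipschitz continuity of $f,g$ together with the elementary bound $|\Delta\langle B\rangle_{n+1}|\le\bar\sigma^2\Delta t$ produces
\[
|Y_{t_n}^{t_n,X^n}-Y^n|^2\le(1+C\Delta t)\,\mathbb{\tilde E}_{t_n}^{TR,X^n}\!\left[|Y_{t_{n+1}}-Y^{n+1}|^2+\Delta t\,|Z_{t_{n+1}}-Z^{n+1}|^2\right]+\tfrac{C}{\Delta t}\bigl(|\tilde R_y^n|^2+|R_y^{G,n}|^2\bigr).
\]

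For the $Z$-component I would exploit the auxiliary random variable $\tilde Z^n$ defined in (\ref{Z^wan}): by the triangle inequality $|Z_{t_n}^{t_n,X^n}-Z^n|^2\le 2|Z_{t_n}^{t_n,X^n}-\tilde Z^n|^2+2|\tilde Z^n-Z^n|^2$. The first summand is the main one; writing its integrand as $(Y_{t_{n+1}}^{t_{n+1},X^{n+1}}-Y^{n+1})\Delta B_{n+1}$ and applying the sharp variance bound Proposition \ref{PropE}(c) yields the crucial factor $(\tilde\sigma^n)^2\Delta t$, which cancels against the $1/((\tilde\sigma^n)^2\Delta t)^2$ coming from the prefactor to give
\[
|Z_{t_n}^{t_n,X^n}-\tilde Z^n|^2\le \tfrac{C}{\Delta t}\,\mathbb{\tilde E}_{t_n}^{TR,X^n}\!\left[|Y_{t_{n+1}}-Y^{n+1}|^2\right]+\tfrac{C}{\Delta t^2}\bigl(|\tilde R_z^n|^2+|R_z^{G,n}|^2\bigr),
\]
where non-degeneracy $\underline\sigma>0$ is used to bound $(\tilde\sigma^n)^{-2}$. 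Multiplying by $\Delta t$ shows that the $Z$-error-squared contributes with an $O(1)$ (rather than $O(1/\Delta t)$) coefficient to the future $Y$-error-squared.

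Combining the two estimates, choosing the constant $C$ multiplying $\Delta t|Z|^2$ on the left large enough to absorb the cross-coupling, taking $\mathbb{\tilde E}^{TR}$ on both sides (using sublinearity and Proposition \ref{PropE}(a) to commute with the inner conditional expectations), and iterating backward from $n=N-1$ to $n$ produces the claimed bound. The main technical obstacle is the careful bookkeeping of the two distinct arg-max volatilities $\sigma^n$ and $\tilde\sigma^n$: neither bracket in (\ref{4.1}) and (\ref{yn_tr}) is maximized by the same $\sigma$, so one must use the max-inequality trick above and, in parallel, control the $Z$-equation with $\tilde\sigma^n$ (via $\tilde Z^n$) while the scheme uses $\sigma^n$, which is precisely why the term $C\Delta t\,\mathbb{\tilde E}^{TR}[|\tilde Z^n-Z^n|^2]$ must appear as a residual in (\ref{theorem1}). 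The secondary obstacle is the $\Delta t$-accounting: the Lipschitz estimate on $g$ multiplied by $\Delta\langle B\rangle_{n+1}\le\bar\sigma^2\Delta t$ yields $O(\Delta t)$ contributions to $Z$, and these must be absorbed by Young's inequality with parameter proportional to $\Delta t$ to obtain the $(1+C\Delta t)$ Gr\"onwall factor.
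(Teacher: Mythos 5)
Your overall architecture — one-step recursion plus discrete Gr\"onwall, the tower property of Proposition \ref{PropE}(a), the auxiliary $\tilde Z^n$ to handle the mismatch between $\sigma^n$ and $\tilde\sigma^n$, and the residual $C\Delta t\,\mathbb{\tilde{E}}^{TR}[|\tilde Z^n-Z^n|^2]$ — matches the paper's. The gap is in your $Z$-step. Writing $\delta Y_{n+1}=Y_{t_{n+1}}^{t_{n+1},X^{n+1}}-Y^{n+1}$, when you pass from Proposition \ref{PropE}(c) to your displayed bound for $|Z_{t_n}^{t_n,X^n}-\tilde Z^n|^2$ you discard the subtracted term $-|\mathbb{E}_{t_n}^{TR,\tilde\sigma^n,X^n}[\delta Y_{n+1}]|^2$ and keep only $\frac{C}{\Delta t}\mathbb{E}_{t_n}^{TR,\tilde\sigma^n,X^n}[|\delta Y_{n+1}|^2]$. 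After multiplying by $\Delta t$, this puts an $O(1)$ — not $O(\Delta t)$ — multiple of $\mathbb{E}[|\delta Y_{n+1}|^2]$ on the right of the combined recursion, so the per-step factor is $1+c_0$ with $c_0$ independent of $\Delta t$, and iterating over $N\sim T/\Delta t$ steps gives $(1+c_0)^N$, not $e^{CT}$. This cannot be fixed by "choosing the constant $C$ multiplying $\Delta t|Z|^2$ on the left large enough": enlarging that constant scales the offending $O(1)$ term proportionally, while shrinking it to $O(\Delta t)$ only proves the weaker statement with $(\Delta t)^2|\delta Z_n|^2$ on the left. The paper's proof works precisely because the subtracted square from Proposition \ref{PropE}(c) is retained in (\ref{4.6}) and cancels, upon adding the $Y$- and $Z$-estimates, against the leading term $(1+\gamma\Delta t)|\mathbb{E}_{t_n}^{TR,\sigma^n,X^n}[\delta Y_{n+1}]|^2$ of (\ref{4.4}) \emph{before} Jensen is applied; only $\gamma\Delta t|\mathbb{E}[\delta Y_{n+1}]|^2$ survives, which is what produces the $(1+C\Delta t)$ factor. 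Your plan applies Jensen (Proposition \ref{PropE}(b)) to the $Y$-estimate immediately, which forecloses this cancellation.

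A second, related issue: even if you restore the subtraction, the cancellation only works if the $Y$- and $Z$-estimates at step $n$ are taken under the \emph{same} parameter. Your inequality $|\sup_\sigma a_\sigma-\sup_\sigma b_\sigma|\le\sup_\sigma|a_\sigma-b_\sigma|$ leaves the $Y$-bound at an unspecified maximizing $\sigma$, while your $Z$-decomposition is always taken at $\tilde\sigma^n$; the two conditional means then need not coincide and do not cancel. This is exactly what the paper's two-case analysis resolves: when $Y^n>Y_{t_n}^{t_n,X^n}$ both estimates are run under $\sigma^n$ (and $\tilde Z^n$ is not needed), and when $Y_{t_n}^{t_n,X^n}\ge Y^n$ both are run under $\tilde\sigma^n$, which is where the $\tilde Z^n-Z^n$ term enters. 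Without this alignment, or some substitute for it, your one-step inequality does not follow from the estimates you state.
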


\begin{proof}
For simplicity, we set
\begin{align*}
&  \delta Y_{n}=Y_{t_{n}}^{t_{n},X^{n}}-Y^{n}\text{, \  \  \  \ }\delta
Z_{n}=Z_{t_{n}}^{t_{n},X^{n}}-Z^{n},\\
&  \delta f_{n}=f(t_{n},X^{n},Y_{t_{n}}^{t_{n},X^{n}},Z_{t_{n}}^{t_{n},X^{n}%
})-f(t_{n},X^{n},Y^{n},Z^{n}),\\
&  \delta g_{n}=g(t_{n},X^{n},Y_{t_{n}}^{t_{n},X^{n}},Z_{t_{n}}^{t_{n},X^{n}%
})-g(t_{n},X^{n},Y^{n},Z^{n}).
\end{align*}
Step 1. We consider the case of $Y^{n}>Y_{t_{n}}^{t_{n},X^{n}}$. Subtracting
(\ref{4.1}) from (\ref{Yn_Tri}), we have%
\begin{align}
Y^{n}-Y_{t_{n}}^{t_{n},X^{n}}  &  =\mathbb{E}_{t_{n}}^{TR,\sigma^{n},X^{n}%
}\left[  Y^{n+1}+f^{n+1}\Delta t+g^{n+1}\Delta \langle B\rangle_{n+1}\right]
-\tilde{R}_{y}^{n}-R_{y}^{T,n}\label{4.2}\\
&  \text{ \  \ }-\mathbb{E}_{t_{n}}^{TR,\tilde{\sigma}^{n},X^{n}}\left[
Y_{t_{n+1}}^{t_{n+1},X^{n+1}}+f_{t_{n+1}}^{t_{n+1},X^{n+1}}\Delta
t+g_{t_{n+1}}^{t_{n+1},X^{n+1}}\Delta \langle B\rangle_{n+1}\right]  .\nonumber
\end{align}
Noting that%
\begin{align}
&  \mathbb{E}_{t_{n}}^{TR,\sigma^{n},X^{n}}\left[  Y_{t_{n+1}}^{t_{n+1}%
,X^{n+1}}+f_{t_{n+1}}^{t_{n+1},X^{n+1}}\Delta t+g_{t_{n+1}}^{t_{n+1},X^{n+1}%
}\Delta \langle B\rangle_{n+1}\right] \label{comp}\\
&  \leq \mathbb{E}_{t_{n}}^{TR,\tilde{\sigma}^{n},X^{n}}\left[  Y_{t_{n+1}%
}^{t_{n+1},X^{n+1}}+f_{t_{n+1}}^{t_{n+1},X^{n+1}}\Delta t+g_{t_{n+1}}%
^{t_{n+1},X^{n+1}}\Delta \langle B\rangle_{n+1}\right]  ,\nonumber
\end{align}
which implies%
\begin{equation}
0<-\delta Y_{n}\leq-\mathbb{E}_{t_{n}}^{TR,\sigma^{n},X^{n}}\left[  \delta
Y_{n+1}+\delta f_{n+1}\Delta t+\delta g_{n+1}\Delta \langle B\rangle
_{n+1}\right]  -\tilde{R}_{y}^{n}-R_{y}^{T,n}.
\end{equation}
By the Lipschitz continuity of $f$, $g$ and the definition of $\mathbb{E}%
_{t_{n}}^{TR,\sigma^{n},X^{n}}$, we have%
\begin{equation}
\left \vert \delta Y_{n}\right \vert \leq \left \vert \mathbb{E}_{t_{n}%
}^{TR,\sigma^{n},X^{n}}\left[  \delta Y_{n+1}\right]  \right \vert +C\Delta
t\mathbb{E}_{t_{n}}^{TR,\sigma^{n},X^{n}}\left[  \left \vert \delta
Y_{n+1}\right \vert +\left \vert \delta Z_{n+1}\right \vert \right]  +|\tilde
{R}_{y}^{n}|+|R_{y}^{T,n}|. \label{4.3}%
\end{equation}
Taking square on both side of (\ref{4.3}) and using the inequality
$(a+b)^{2}\leq(1+\gamma \Delta t)a^{2}+(1+\frac{1}{\gamma \Delta t})b^{2}$,
$\gamma>0$, we deduce%
\begin{align}
\left \vert \delta Y_{n}\right \vert ^{2}  &  \leq(1+\gamma \Delta t)\left \vert
\mathbb{E}_{t_{n}}^{TR,\sigma^{n},X^{n}}\left[  \delta Y_{n+1}\right]
\right \vert ^{2}\label{4.4}\\
&  \text{ \  \  \ }+C(1+\frac{1}{\gamma \Delta t})\left(  \Delta t\right)
^{2}\mathbb{E}_{t_{n}}^{TR,\sigma^{n},X^{n}}\left[  \left \vert \delta
Y_{n+1}\right \vert ^{2}+\left \vert \delta Z_{n+1}\right \vert ^{2}\right]
\nonumber \\
&  \text{ \  \  \ }+C(1+\frac{1}{\gamma \Delta t})\left(  |\tilde{R}_{y}^{n}%
|^{2}+|R_{y}^{T,n}|^{2}\right)  .\nonumber
\end{align}
Let $\sigma=\sigma^{n}$ in (\ref{3.6}). Combining this with (\ref{Zn_Tri}), we
obtain%
\begin{equation}
(\sigma^{n})^{2}\delta Z_{n}\Delta t=\mathbb{E}_{t_{n}}^{TR,\sigma^{n},X^{n}%
}\left[  \delta Y_{n+1}\Delta B_{n+1}\right]  +\tilde{R}_{z}^{n,\sigma^{n}%
}+R_{z}^{T,n,\sigma^{n}}. \label{4.5}%
\end{equation}
Taking the square of both sides of (\ref{4.5}), by Proposition \ref{PropE}, we
can deduce%
\begin{align}
\frac{(\sigma^{n})^{2}}{C}\Delta t\left \vert \delta Z_{n}\right \vert ^{2}  &
\leq \mathbb{E}_{t_{n}}^{TR,\sigma^{n},X^{n}}\left[  |\delta Y_{n+1}%
|^{2}\right]  -\left \vert \mathbb{E}_{t_{n}}^{TR,\sigma^{n},X^{n}}\left[
\delta Y_{n+1}\right]  \right \vert ^{2}\label{4.6}\\
&  \text{ \  \  \ }+\frac{C}{\Delta t}\left(  |\tilde{R}_{z}^{n,\sigma^{n}}%
|^{2}+|R_{z}^{T,n,\sigma^{n}}|^{2}\right)  .\nonumber
\end{align}
Putting together (\ref{4.4}) and (\ref{4.6}), we get
\begin{align*}
\left \vert \delta Y_{n}\right \vert ^{2}+\frac{\underline{\sigma}^{2}}{C}\Delta
t\left \vert \delta Z_{n}\right \vert ^{2}  &  \leq(1+\gamma \Delta
t)\mathbb{E}_{t_{n}}^{TR,\sigma^{n},X^{n}}\left[  |\delta Y_{n+1}|^{2}\right]
\\
&  \text{ \  \  \ }+(1+\gamma \Delta t)\frac{C\Delta t}{\gamma}\mathbb{E}_{t_{n}%
}^{TR,\sigma^{n},X^{n}}\left[  \left \vert \delta Y_{n+1}\right \vert
^{2}+\left \vert \delta Z_{n+1}\right \vert ^{2}\right] \\
&  \text{ \  \  \ }+\frac{C}{\Delta t}\left(  |\tilde{R}_{y}^{n}|^{2}%
+|R_{y}^{T,n}|^{2}+|\tilde{R}_{z}^{n,\sigma^{n}}|^{2}+|R_{z}^{T,n,\sigma^{n}%
}|^{2}\right) \\
&  \leq(1+C\Delta t)\mathbb{E}_{t_{n}}^{TR,\sigma^{n},X^{n}}\left[  |\delta
Y_{n+1}|^{2}+\frac{\underline{\sigma}^{2}}{C}\Delta t\left \vert \delta
Z_{n+1}\right \vert ^{2}\right] \\
\text{ \  \  \  \  \ }  &  \text{\  \  \  \ }+\frac{C}{\Delta t}\left(  |\tilde
{R}_{y}^{n}|^{2}+|R_{y}^{T,n}|^{2}+|\tilde{R}_{z}^{n,\sigma^{n}}|^{2}%
+|R_{z}^{T,n,\sigma^{n}}|^{2}\right)  ,
\end{align*}
where we have chosen $\gamma=C^{2}/\underline{\sigma}^{2}$. It yields that
\begin{align}
\left \vert \delta Y_{n}\right \vert ^{2}+C\Delta t\left \vert \delta
Z_{n}\right \vert ^{2}  &  \leq(1+C\Delta t)\mathbb{E}_{t_{n}}^{TR,\sigma
^{n},X^{n}}\left[  |\delta Y_{n+1}|^{2}+C\Delta t\left \vert \delta
Z_{n+1}\right \vert ^{2}\right] \label{4.7}\\
&  \text{ \  \  \ }+\frac{C}{\Delta t}\left(  |\tilde{R}_{y}^{n}|^{2}%
+|R_{y}^{T,n}|^{2}+|\tilde{R}_{z}^{n,\sigma^{n}}|^{2}+|R_{z}^{T,n,\sigma^{n}%
}|^{2}\right)  .\quad \nonumber
\end{align}
Step 2. We consider the case of $Y_{t_{n}}^{t_{n},X^{n}}\geq Y^{n}$.
Subtracting (\ref{Yn_Tri}) from (\ref{4.1}) and noting that%
\begin{align}
&  \mathbb{E}_{t_{n}}^{TR,\tilde{\sigma}^{n},X^{n}}\left[  Y_{n+1}%
+f_{n+1}\Delta t+g_{n+1}\Delta \langle B\rangle_{n+1}\right] \\
&  \leq \mathbb{E}_{t_{n}}^{TR,\sigma^{n},X^{n}}\left[  Y_{n+1}+f_{n+1}\Delta
t+g_{n+1}\Delta \langle B\rangle_{n+1}\right]  ,\nonumber
\end{align}
it follows that
\begin{equation}
\delta Y_{n}\leq \mathbb{E}_{t_{n}}^{TR,\tilde{\sigma}^{n},X^{n}}\left[  \delta
Y_{n+1}+\delta f_{n+1}\Delta t+\delta g_{n+1}\Delta \langle B\rangle
_{n+1}\right]  +\tilde{R}_{y}^{n}+R_{y}^{T,n}.\text{ \  \  \ } \label{4.8}%
\end{equation}
In addition, by (\ref{Zn_Tri}) and (\ref{4.10}), we can see
\begin{align}
(\tilde{\sigma}^{n})^{2}\delta Z_{n}\Delta t  &  =(\tilde{\sigma}^{n}%
)^{2}\Delta t(Z_{t_{n}}^{t_{n},X^{n}}-\tilde{Z}^{n}+\tilde{Z}^{n}%
-Z^{n})\label{4.11}\\
&  =\mathbb{E}_{t_{n}}^{TR,\tilde{\sigma}^{n},X^{n}}\left[  \delta
Y_{n+1}\Delta B_{n+1}\right]  +(\tilde{\sigma}^{n})^{2}\Delta t(\tilde{Z}%
^{n}-Z^{n})+\tilde{R}_{z}^{n,\tilde{\sigma}^{n}}+R_{z}^{T,n,\tilde{\sigma}%
^{n}},\nonumber
\end{align}
where $\tilde{Z}^{n}$ is given in (\ref{Z^wan}). Based on (\ref{4.8}%
)-(\ref{4.11}), similar to the step 1, we can deduce
\begin{align}
\left \vert \delta Y_{n}\right \vert ^{2}+C\Delta t\left \vert \delta
Z_{n}\right \vert ^{2}  &  \leq(1+C\Delta t)\mathbb{E}_{t_{n}}^{TR,\tilde
{\sigma}^{n},X^{n}}\left[  |\delta Y_{n+1}|^{2}+C\Delta t\left \vert \delta
Z_{n+1}\right \vert ^{2}\right] \label{4.12}\\
&  \text{ \  \  \ }+\frac{C}{\Delta t}\left(  |\tilde{R}_{y}^{n}|^{2}%
+|R_{y}^{T,n}|^{2}+|\tilde{R}_{z}^{n,\tilde{\sigma}^{n}}|^{2}+|R_{z}%
^{T,n,\tilde{\sigma}^{n}}|^{2}\right) \nonumber \\
&  \text{ \  \  \ }+C\Delta t|\tilde{Z}^{n}-Z^{n}|^{2}.\nonumber
\end{align}
Step 3. Together with (\ref{4.7}) and (\ref{4.12}), we have
\begin{align}
\left \vert \delta Y_{n}\right \vert ^{2}+C\Delta t\left \vert \delta
Z_{n}\right \vert ^{2}  &  \leq(1+C\Delta t)\mathbb{\tilde{E}}_{t_{n}%
}^{TR,X^{n}}\left[  |\delta Y_{n+1}|^{2}+C\Delta t\left \vert \delta
Z_{n+1}\right \vert ^{2}\right]  \quad \label{4.13}\\
&  \text{ \  \  \ }+\frac{C}{\Delta t}\left(  |\tilde{R}_{y}^{n}|^{2}%
+|R_{y}^{T,n}|^{2}+|\tilde{R}_{z}^{n}|^{2}+|R_{z}^{T,n}|^{2}\right)
\quad \quad \nonumber \\
&  \text{ \  \  \ }+C\Delta t|\tilde{Z}^{n}-Z^{n}|^{2},\nonumber
\end{align}
where $\tilde{R}_{z}^{n}=\tilde{R}_{z}^{n,\sigma^{n}}\vee \tilde{R}%
_{z}^{n,\tilde{\sigma}^{n}}$\ and $R_{z}^{T,n}=R_{z}^{T,n,\sigma^{n}}\vee
R_{z}^{T,n,\tilde{\sigma}^{n}}$. By Proposition \ref{PropE} $\left(  a\right)
$, we have%
\begin{align*}
\mathbb{\tilde{E}}^{TR}\left[  \left \vert \delta Y_{n}\right \vert ^{2}+C\Delta
t\left \vert \delta Z_{n}\right \vert ^{2}\right]   &  \leq(1+C\Delta
t)\mathbb{\tilde{E}}^{TR}\left[  |\delta Y_{n+1}|^{2}+C\Delta t\left \vert
\delta Z_{n+1}\right \vert ^{2}\right] \\
&  \text{ \  \  \ }+\frac{C}{\Delta t}\mathbb{\tilde{E}}^{TR}\left[  |\tilde
{R}_{y}^{n}|^{2}+|R_{y}^{T,n}|^{2}+|\tilde{R}_{z}^{n}|^{2}+|R_{z}^{T,n}%
|^{2}\right] \\
&  \text{ \  \  \ }+C\Delta t\mathbb{\tilde{E}}^{TR}\left[  |\tilde{Z}^{n}%
-Z^{n}|^{2}\right]  .
\end{align*}
Using the induction method, one obtains
\begin{align*}
\mathbb{\tilde{E}}^{TR}\left[  \left \vert \delta Y_{n}\right \vert ^{2}+C\Delta
t\left \vert \delta Z_{n}\right \vert ^{2}\right]   &  \leq(1+C\Delta
t)^{N-n}\mathbb{\tilde{E}}^{TR}\left[  |\delta Y_{N}|^{2}+C\Delta t\left \vert
\delta Z_{N}\right \vert ^{2}\right] \\
&  \text{ \  \  \ }+\frac{C}{\Delta t}\sum \limits_{i=n}^{N-1}(1+C\Delta
t)^{i-n}\mathbb{\tilde{E}}^{TR}\left[  |\tilde{R}_{y}^{i}|^{2}+|R_{y}%
^{T,i}|^{2}+|\tilde{R}_{z}^{i}|^{2}+|R_{z}^{T,i}|^{2}\right] \\
&  \text{ \  \  \ }+C\Delta t\sum \limits_{i=n}^{N-1}(1+C\Delta t)^{i-n}%
\mathbb{\tilde{E}}^{TR}\left[  |\tilde{Z}^{i}-Z^{i}|^{2}\right]  ,
\end{align*}
which yields the result.
\end{proof}

\subsection{Error estimates}

To provide the error estimates for Scheme \ref{trinomial tree scheme}, we introduce the
following notations. For given constants $\alpha,\beta \in(0,1]$ and $Q=[0,T]
\times \mathbb{R}^{m}$, denote
\[
\left \Vert u\right \Vert _{C^{\alpha,\beta}(Q)}=\sup
_{\substack{x,y\in \mathbb{R}^{m},x\neq y\\s,t\in [0,T]  ,s\neq
t}}\frac{\left \vert u(s,x)-u(t,y) \right \vert
}{\left \vert s-t\right \vert ^{a}+\left \vert x-y\right \vert ^{\beta}},
\]
and%
\[%
\begin{array}
[c]{r}%
C_{b}^{1+\alpha,2+\beta}(Q)  =\left \{  u:\left \Vert \partial
_{t}u\right \Vert _{C^{\alpha,\beta}(Q)}+
{\textstyle \sum_{i=1}^{m}}
\left \Vert \partial_{x_{i}}u\right \Vert _{C^{\alpha,\beta}(Q)}+
{\textstyle \sum_{i,j=1}^{m}}
\left \Vert \partial_{x_{i}x_{j}}^{2}u\right \Vert _{C^{\alpha,\beta}(Q)}<\infty \right \}  .
\end{array}
\]
Similarly, we can define $C_{b}^{1+\alpha,2+\beta,2+\beta,2+\beta}(Q)$. We need the following assumptions:

\begin{enumerate}
\item[(A1)] The functions $f(t,x,y,z), g(t,x,y,z), b(t,x), \sigma(t,x)$, and
$h(t,x)$ are uniformly Lipschitz continuous with respect to $(x,y,z)$ and
H\"{o}lder continuous of parameter $\frac{1}{2}$ with respect to $t$;
$b,\sigma$, and $h$ are bounded;

\item[(A2)] The function $u(t,x) \in C_{b}^{1+1/2,2+1}([0,T] \times
\mathbb{R}^{m})$.
\end{enumerate}

\begin{remark}
If $f,g\in C_{b}^{1+1/2,2+1,2+1,2+1}([0,T] \times \mathbb{R}^{m}\times
\mathbb{R}\times \mathbb{R}) $ and $\phi \in C_{b}^{2+1}( \mathbb{R}^{m})$, by
Theorem 6.4.3 in Krylov \cite{Krylov1987} (see also Theorem 4.4 in Appendix C
in Peng \cite{P2010}), then there exists a constant $\alpha \in(0,1)$ such that
for each $\kappa>0$, $u\in C_{b}^{1+\alpha/2,2+\alpha}([0,T-\kappa
]\times \mathbb{R}^{m})$.
\end{remark}

We state our convergence theorem here.

\begin{theorem}
\label{theorem 2}Suppose \emph{(A1)}-\emph{(A2)} hold. Let $(X_{t}
^{t_{n},X^{n}},Y_{t}^{t_{n},X^{n}},Z_{t}^{t_{n},X^{n}})_{t_{n}\leq t\leq T}$
be the solution of \eqref{G-SDE}-\eqref{G-BSDE}, and $(X^{n},Y^{n}
,Z^{n})(n=0,1,\ldots,N)$ be the solution to Scheme \ref{trinomial tree scheme}
with $Y^{N}=u(t_{N},X^{N})$ and $Z^{N}=D_{x}u(t_{N},X^{N}) \sigma(t_{N}%
,X^{N})$. Then, for sufficiently small time step $\Delta t$,
\[
\mathbb{\tilde{E}}^{TR}\left[  |Y_{t_{n}}^{t_{n},X^{n}}-Y^{n}|^{2}+\Delta
t|Z_{t_{n}}^{t_{n},X^{n}}-Z^{n}|^{2}\right]  \leq C\Delta t.
\]

\end{theorem}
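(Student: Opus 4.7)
The approach is to apply Theorem~\ref{theorem 1} with the given terminal data and then bound each term on its right-hand side. By Remark~\ref{F-K represent} (a consequence of the nonlinear Feynman--Kac formula, Theorem~\ref{Nonlinear Feynman-Kac}), the terminal values satisfy $Y_{t_{N}}^{t_{N},X^{N}} = u(t_{N},X^{N}) = Y^{N}$ and $Z_{t_{N}}^{t_{N},X^{N}} = D_{x}u(t_{N},X^{N})\sigma(t_{N},X^{N}) = Z^{N}$, so the terminal error in \eqref{theorem1} vanishes. A direct calculation using the weights in \eqref{qi} also yields
\begin{equation*}
\frac{\mathbb{E}_{t_{n}}^{TR,\sigma,X^{n}}\!\left[Y^{n+1}\Delta B_{n+1}\right]}{\sigma^{2}\Delta t} \;=\; \frac{1}{2\sqrt{\Delta t}}\Bigl[\tilde Y^{n+1}(X^{n},\sqrt{\Delta t},\Delta t) - \tilde Y^{n+1}(X^{n},-\sqrt{\Delta t},\Delta t)\Bigr],
\end{equation*}
which is independent of $\sigma\in\{\underline\sigma,\overline\sigma\}$. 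Hence $\tilde Z^{n}=Z^{n}$, so the $\sum|\tilde Z^{i}-Z^{i}|^{2}$ contribution in \eqref{theorem1} is also zero.

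It therefore suffices to show that for each $n$,
\begin{equation*}
\mathbb{\tilde E}^{TR}\!\bigl[|\tilde R_{y}^{n}|^{2} + |R_{y}^{T,n}|^{2} + |\tilde R_{z}^{n}|^{2} + |R_{z}^{T,n}|^{2}\bigr] = O\!\left((\Delta t)^{3}\right),
\end{equation*}
which gives $\tfrac{C}{\Delta t}\sum_{i=n}^{N-1}(\cdot) = \tfrac{C}{\Delta t}\,N\,O(\Delta t^{3}) = O(\Delta t)$. I would decompose $\tilde R_{y}^{n} = \hat R_{y_{1}}^{n} + \hat R_{y_{2}}^{n} + \tilde R_{y_{3}}^{n}$ from \eqref{2.5}--\eqref{2.8}. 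The right-rectangle error $\hat R_{y_{1}}^{n}$ is $O(\Delta t^{3/2})$ in $L^{2}$, using the $\tfrac12$-Hölder continuity in $t$ of $f,g$ from (A1) together with the $\tfrac12$-Hölder regularity of $(X_{t},Y_{t},Z_{t})$ inherited from (A2). The Euler-step error $\hat R_{y_{2}}^{n}$ is likewise $O(\Delta t^{3/2})$ in $L^{2}$ by combining the Lipschitz continuity of $u(t_{n+1},\cdot)$ with the one-step strong order $3/2$ of the Euler scheme for the $G$-SDE. For the trinomial quadrature error $R_{y}^{T,n}$ (and, analogously, $R_{z}^{T,n}$), a Taylor expansion of the integrand in $(\Delta B_{n+1},\Delta\langle B\rangle_{n+1})$ around the origin, combined with the moment matching of Remark~\ref{remark2} (the weights $\{\omega_{i}^{\sigma},q_{i}\}$ reproduce the first three moments of $\Delta B_{n+1}$ and the first moment of $\Delta\langle B\rangle_{n+1}$ under $P^{\sigma}$), shows that the leading non-vanishing term is of order $\Delta t^{2}$, giving squared error $O(\Delta t^{4})$. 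The $Z$-scheme pieces $R_{z_{1}}^{n,\sigma}, R_{z_{2}}^{n,\sigma}, R_{z_{3}}^{n,\sigma}$ inside $\tilde R_{z}^{n}$ are controlled by the same ingredients, together with Proposition~\ref{PropE}(c), which supplies the factor $\sigma^{2}\Delta t$ needed to absorb the division by $\sigma^{2}\Delta t$ in \eqref{zn_tr}.

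The hardest estimate is the bound on $\tilde R_{y_{3}}^{n}$, which measures the gap between the full $G$-expectation $\mathbb{\hat E}_{t_{n}}^{X^{n}}[\cdot]$ (a supremum over every volatility path valued in $[\underline\sigma,\overline\sigma]$) and $\mathbb{\tilde E}_{t_{n}}^{X^{n}}[\cdot]$ (a supremum only over the two constant paths $\sigma\equiv\underline\sigma$ and $\sigma\equiv\overline\sigma$). The idea is that, under (A2), the integrand is to leading order in $\Delta t$ a smooth function of $(\Delta B_{n+1},\Delta\langle B\rangle_{n+1})$ whose conditional $G$-expectation is governed by the $G$-heat equation; for such leading terms the optimal volatility path is in fact constant and equal to $\underline\sigma$ or $\overline\sigma$, determined by the signs of the relevant second derivatives of $u$, so the discrepancy between $\mathbb{\hat E}$ and $\mathbb{\tilde E}$ is pushed to a higher-order remainder controlled by the $C^{1+1/2,2+1}$ modulus of $u$, yielding $\mathbb{\hat E}[|\tilde R_{y_{3}}^{n}|^{2}] = O(\Delta t^{3})$. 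Inserting all of these bounds into \eqref{theorem1}, together with the vanishing terminal and $\tilde Z^{n}-Z^{n}$ terms, gives $\mathbb{\tilde E}^{TR}\bigl[|Y_{t_{n}}^{t_{n},X^{n}}-Y^{n}|^{2} + \Delta t\,|Z_{t_{n}}^{t_{n},X^{n}}-Z^{n}|^{2}\bigr] \leq C\Delta t$, as claimed.
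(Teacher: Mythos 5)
Your overall architecture is exactly the paper's: invoke Theorem \ref{theorem 1}, observe that the terminal terms vanish because $Y^{N}=u(t_{N},X^{N})=Y_{t_{N}}^{t_{N},X^{N}}$ and $Z^{N}=D_{x}u(t_{N},X^{N})\sigma(t_{N},X^{N})=Z_{t_{N}}^{t_{N},X^{N}}$, use the $\sigma$-independence of the $Z$-formula (Lemma \ref{lemma1}) to get $\tilde{Z}^{n}=Z^{n}$, and then show each per-step truncation error is $O((\Delta t)^{3/2})$ so that its square summed over $N$ steps and divided by $\Delta t$ is $O(\Delta t)$. Your treatment of $\hat{R}_{y_{1}}^{n}$, of $\tilde{R}_{y_{3}}^{n}$ (constant-volatility optimality of the frozen leading term), and of the quadrature errors is in line with Lemmas \ref{lemma2}, \ref{lemma4}, \ref{lemma5}; note only that your claim of an $O((\Delta t)^{2})$ leading quadrature error is more than (A2) supports --- $u$ is only $C^{2+1}$ in space, so the Taylor remainder is $O((\Delta t)^{3/2})$ --- but this is harmless since $O((\Delta t)^{3/2})$ is all that is needed.

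The genuine gap is your bound for $\hat{R}_{y_{2}}^{n}$. You justify it by ``the one-step strong order $3/2$ of the Euler scheme,'' but this is false: the local error of the Euler step contains $\int_{t_{n}}^{t_{n+1}}\left[\sigma(t,X_{t}^{t_{n},X^{n}})-\sigma(t_{n},X^{n})\right]dB_{t}$, whose $L^{2}$-norm is of order $\Delta t$ (by the It\^{o} isometry and $\mathbb{\hat{E}}_{t_{n}}^{X^{n}}[|X_{t}^{t_{n},X^{n}}-X^{n}|^{2}]\leq C\Delta t$), not $(\Delta t)^{3/2}$. Consequently, Lipschitz continuity of $u(t_{n+1},\cdot)$ combined with the strong local error only yields $|\hat{R}_{y_{2}}^{n}|\leq C\Delta t$, and then $\frac{C}{\Delta t}\sum_{i}|\hat{R}_{y_{2}}^{i}|^{2}$ is $O(1)$ rather than $O(\Delta t)$, so your argument for this term collapses. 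What is needed --- and what the paper's Lemma \ref{lemma2} (step 2) actually does --- is a weak-type one-step estimate: introduce the frozen-coefficient interpolation $\tilde{X}_{t}^{t_{n},X^{n}}$, apply the $G$-It\^{o} formula to $u(t,X_{t}^{t_{n},X^{n}})$ and $u(t,\tilde{X}_{t}^{t_{n},X^{n}})$ on $[t_{n},t_{n+1}]$ so that the $dB$-integrals vanish under $\mathbb{\hat{E}}_{t_{n}}^{X^{n}}$, and bound the remaining $dt$ and $d\langle B\rangle$ integrands using the H\"{o}lder/Lipschitz continuity of $\partial_{t}u$, $D_{x}u$, $D_{x}^{2}u$ from (A2); this gives $|\hat{R}_{y_{2}}^{n}|\leq C(\Delta t)^{3/2}$. (The analogous $R_{z_{2}}^{n,\sigma}$ term is less delicate, since the extra factor $\Delta B_{n+1}$ supplies $\sqrt{\Delta t}$ via Cauchy--Schwarz, so there the $O(\Delta t)$ strong error does suffice.) With this replacement your proof coincides with the paper's.
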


To prove the above theorem, we need the following Lemmas \ref{lemma1}%
-\ref{lemma5}.

\begin{lemma}
\label{lemma1}The numerical scheme \eqref{zn_tr} for $Z^{n}\ $is independent
of the parameter\ $\sigma$.
\end{lemma}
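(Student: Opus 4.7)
The plan is to show this by direct computation using the specific form of the trinomial tree weights and nodes given in \eqref{qi}. Recall that the nodes are $q_1 = -1$, $q_2 = 0$, $q_3 = 1$, and the weights are $\omega_1^\sigma = \omega_3^\sigma = \sigma^2/2$, $\omega_2^\sigma = 1 - \sigma^2$. Write $\tilde{\varphi}(x, y, z) = Y^{n+1}(x + b(t_n,x)\Delta t + \sigma(t_n,x)y + h(t_n,x)z) \cdot y$, so that $Y^{n+1} \Delta B_{n+1}$, viewed as a function of $(X^n, \Delta B_{n+1}, \Delta \langle B\rangle_{n+1})$, corresponds to this $\tilde{\varphi}$ in the definition \eqref{E TR}.

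Next I would expand the numerator of \eqref{zn_tr} explicitly:
\begin{equation*}
\mathbb{E}_{t_n}^{TR,\sigma,X^n}\bigl[Y^{n+1} \Delta B_{n+1}\bigr] = \sum_{i=1}^{3} \omega_i^\sigma \, \tilde{\varphi}\bigl(X^n, \sqrt{\Delta t}\,q_i, \Delta t\, q_i^2\bigr).
\end{equation*}
The key observation is that the factor $\Delta B_{n+1}$ contributes $\sqrt{\Delta t}\, q_i$ into each summand, and since $q_2 = 0$ the middle term vanishes regardless of the weight $\omega_2^\sigma$. Thus only the $i = 1$ and $i = 3$ terms survive, and both of them carry the common weight $\sigma^2 / 2$. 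Pulling $\sigma^2/2$ outside yields
\begin{equation*}
\mathbb{E}_{t_n}^{TR,\sigma,X^n}\bigl[Y^{n+1} \Delta B_{n+1}\bigr] = \frac{\sigma^2 \sqrt{\Delta t}}{2} \bigl[\tilde{\varphi}_3 - \tilde{\varphi}_1\bigr],
\end{equation*}
where $\tilde{\varphi}_i$ abbreviates the evaluation at the $i$-th node (stripped of the $q_i$ factor in front).

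Finally, dividing by $\sigma^2 \Delta t$ cancels the $\sigma^2$, leaving an expression that depends only on $X^n$, $Y^{n+1}$, the coefficients $b, h, \sigma$ at $t_n$, and $\Delta t$, but not on the choice $\sigma \in \{\underline{\sigma}, \overline{\sigma}\}$. This gives the claim. There is no real obstacle here: the result is essentially an algebraic consequence of the fact that the middle node is zero and the two outer weights are symmetric and proportional to $\sigma^2$. The only thing to be careful about is writing out $\tilde{\varphi}$ correctly so that the $q_i = 0$ case indeed kills the whole summand, which it does because the $\Delta B_{n+1}$ factor is multiplicative.
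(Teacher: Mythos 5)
Your proof is correct and is essentially the same computation as the paper's: the middle node $q_2=0$ annihilates its term, the two outer terms carry the common weight $\sigma^{2}/2$, and the $\sigma^{2}$ cancels upon dividing by $\sigma^{2}\Delta t$, leaving $\frac{1}{2\sqrt{\Delta t}}\bigl[Y^{n+1}(X^{n+1}(q_{3}))-Y^{n+1}(X^{n+1}(q_{1}))\bigr]$, which involves the coefficient function $\sigma(t_{n},X^{n})$ but not the parameter $\sigma\in\{\underline{\sigma},\overline{\sigma}\}$. No gaps.
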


\begin{proof}
Notice that
\[
\frac{\mathbb{E}_{t_{n}}^{TR,\sigma,X^{n}}\left[  Y^{n+1}\Delta B_{n+1}%
\right]  }{\sigma^{2}\Delta t}=\frac{1}{2\sqrt{\Delta t}}\left[
Y^{n+1}\left(  X^{n+1}(q_{1})\right)  q_{1}+Y^{n+1}\left(  X^{n+1}%
(q_{3})\right)  q_{3}\right]  ,
\]
where
\[
X^{n+1}\left(  q_{i}\right)  =X^{n}+b(t_{n},X^{n})\Delta t+h(t_{n}%
,X^{n})\Delta tq_{i}^{2}+\sigma(t_{n},X^{n})\sqrt{\Delta t}q_{i},
\]
and $\left \{  q_{i}\right \}  _{i=1}^{3}$ is defined in (\ref{qi}),\ which
implies the result.
\end{proof}

\begin{lemma}
\label{lemma2}Suppose \emph{(A1)}-\emph{(A2)} hold. Let $\hat{R}_{y_{1}}^{n}$
and $\hat{R}_{y_{2}}^{n}$ be the truncation errors defined in \eqref{2.5} and
\eqref{2.6}, respectively. Then, for $n=0,1,\ldots,N-1,$
\[%
\begin{array}
[c]{ll}%
|\hat{R}_{y_{1}}^{n}|\leq C(\Delta t)^{\frac{3}{2}},\text{ \ } & |\hat
{R}_{y_{2}}^{n}|\leq C(\Delta t)^{\frac{3}{2}}.
\end{array}
\]

\end{lemma}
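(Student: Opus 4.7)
The plan is to apply the sublinearity inequality $|\hat{\mathbb E}[A] - \hat{\mathbb E}[B]| \le \hat{\mathbb E}[|A-B|]$ to the conditional $G$-expectation in both estimates, and then bound $\hat{\mathbb E}_{t_n}^{X^n}[|A-B|]$ pointwise using (A1)--(A2), the Feynman--Kac representation $Y_t = u(t, X_t)$, $Z_t = D_x u(t, X_t)\sigma(t, X_t)$ from Remark~\ref{F-K represent}, and the standard one-step $G$-SDE moment bound $\hat{\mathbb E}_{t_n}^{X^n}[|X_t - X^n|^p] \le C(t - t_n)^{p/2}$, which follows from the $G$-BDG inequality.

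For $\hat R_{y_{1}}^{n}$ (the right-rectangle quadrature error), I would bound the integrand differences by Lipschitz-in-$(x,y,z)$ plus H\"older-$\tfrac{1}{2}$-in-$t$ of $f$, giving $|f_{t}^{t_{n},X^{n}} - f_{t_{n+1}}^{t_{n},X^{n}}| \le C[(t_{n+1}-t)^{1/2} + |X_{t} - X_{t_{n+1}}| + |Y_{t} - Y_{t_{n+1}}| + |Z_{t} - Z_{t_{n+1}}|]$, and similarly for $g$. By (A2) the $Y$- and $Z$-differences inherit the same H\"older-$1/2$-in-time modulus as $X$ (since $D_x u$ and $D_x u\,\sigma$ are Lipschitz), so $\hat{\mathbb E}_{t_n}^{X^n}[|f_{t}^{t_n,X^n} - f_{t_{n+1}}^{t_n,X^n}|] \le C(t_{n+1}-t)^{1/2}$. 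Integrating in $t$, and bounding $d\langle B\rangle_{t} \le \bar{\sigma}^{2}\,dt$ for the $g$ integral, yields $|\hat R_{y_{1}}^{n}| \le C(\Delta t)^{3/2}$.

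For $\hat R_{y_{2}}^{n}$ the sublinearity reduction gives $|\hat R_{y_{2}}^{n}| \le \hat{\mathbb E}_{t_{n}}^{X^{n}}[|A - B|]$ where $A - B$ equals $[u(t_{n+1}, X_{t_{n+1}}^{t_{n},X^{n}}) - u(t_{n+1}, X^{n+1})] + \Delta t\,[f_{t_{n+1}}^{t_{n},X^{n}} - f_{t_{n+1}}^{t_{n+1},X^{n+1}}] + \Delta\langle B\rangle_{n+1}\,[g_{t_{n+1}}^{t_{n},X^{n}} - g_{t_{n+1}}^{t_{n+1},X^{n+1}}]$. The two weighted brackets contribute at most $C\Delta t\cdot \hat{\mathbb E}[|X_{t_{n+1}}^{t_{n},X^{n}} - X^{n+1}|]$, which is $O((\Delta t)^{3/2})$ via the routine one-step Euler bound $\hat{\mathbb E}[|X_{t_{n+1}}^{t_{n},X^{n}} - X^{n+1}|] \le C(\Delta t)^{1/2}$. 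The $u$-bracket is the hard part: a naive Lipschitz-of-$u$ estimate combined with the $L^{1}_{G}$-size $O(\Delta t)$ of the diffusion integral $\int_{t_{n}}^{t_{n+1}}(\sigma(s,X_{s}) - \sigma(t_{n},X^{n}))\,dB_{s}$ only delivers $O(\Delta t)$, short by a factor $(\Delta t)^{1/2}$.

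The main obstacle is therefore the $u$-bracket. To reach the sharper order, my plan is to Taylor expand $u(t_{n+1},\cdot)$ to second order about $X^{n+1}$: the quadratic remainder is $O((\Delta t)^{2})$ in $\hat{\mathbb E}$ by (A2), and the first-order term $D_{x}u(t_{n+1},X^{n+1})\bigl(X_{t_{n+1}}^{t_{n},X^{n}} - X^{n+1}\bigr)$ is handled by pairing $G$-It\^o's formula applied to $u(s, X_{s}^{t_{n},X^{n}})$ on $[t_{n},t_{n+1}]$ with a Taylor-in-$(t,x)$ expansion of $u(t_{n+1},X^{n+1})$ about $(t_{n},X^{n})$ and the Euler representation \eqref{xn_Tri}. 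After cancellation of the leading drift, $d\langle B\rangle$, and $dB$ contributions, only residuals whose integrands are pointwise $O((s-t_{n})^{1/2})$ (by H\"older-$1/2$-in-$t$ and Lipschitz-in-$x$ of the coefficients together with the smoothness of $u$) remain, and these integrate to $O((\Delta t)^{3/2})$ once more via the $G$-BDG inequality. Combining all pieces gives $|\hat R_{y_{2}}^{n}| \le C(\Delta t)^{3/2}$.
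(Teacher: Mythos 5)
Your treatment of $\hat{R}_{y_{1}}^{n}$ is fine and is essentially the paper's argument (bound $d\langle B\rangle_{t}$ by $\bar{\sigma}^{2}dt$, use Lipschitz/H\"older regularity of $f,g,u,D_{x}u,\sigma$ together with the one-step moment bound for $X$, and integrate).

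For $\hat{R}_{y_{2}}^{n}$, however, there is a genuine gap, and it sits exactly at the step you flag as "the hard part". Your very first reduction, $|\hat{R}_{y_{2}}^{n}|\leq\mathbb{\hat{E}}_{t_{n}}^{X^{n}}[|A-B|]$, throws away the only mechanism that can produce the extra half power of $\Delta t$: once the absolute value is inside the expectation, the symmetric $G$-martingale parts no longer cancel, and they are of size $O(\Delta t)$ in $L^{1}$, not $O((\Delta t)^{3/2})$. Concretely, after your Taylor/It\^{o} pairing the surviving stochastic-integral residual is $\int_{t_{n}}^{t_{n+1}}\bigl(D_{x}u(s,X_{s})\sigma(s,X_{s})-D_{x}u(t_{n},X^{n})\sigma(t_{n},X^{n})\bigr)dB_{s}$; its integrand is indeed $O((s-t_{n})^{1/2})$ in $L^{2}$, but BDG/It\^{o} isometry then gives $\mathbb{\hat{E}}[|\int\eta\,dB|]\leq C\bigl(\int_{t_{n}}^{t_{n+1}}\mathbb{\hat{E}}[|\eta_{s}|^{2}]ds\bigr)^{1/2}\leq C\Delta t$, not $C(\Delta t)^{3/2}$ as you claim. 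The same defect appears in matching the second-order Taylor term against the quadratic-variation integral: the discrepancy $\,(\Delta B_{n+1})^{2}-\Delta\langle B\rangle_{n+1}=2\int_{t_{n}}^{t_{n+1}}(B_{s}-B_{t_{n}})dB_{s}$ is only $O(\Delta t)$ in $L^{1}$. So, as written, your estimate for the $u$-bracket stalls at $O(\Delta t)$, the same obstruction you identified for the naive Lipschitz bound.

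The repair is to use the sharper consequence of sublinearity, $|\hat{R}_{y_{2}}^{n}|\leq\mathbb{\hat{E}}_{t_{n}}^{X^{n}}[M_{t_{n+1}}]\vee\mathbb{\hat{E}}_{t_{n}}^{X^{n}}[-M_{t_{n+1}}]$ (no absolute value inside), which is what the paper does: it applies $G$-It\^{o}'s formula to $u(t,X_{t}^{t_{n},X^{n}})$ and to $u(t,\tilde{X}_{t}^{t_{n},X^{n}})$ (the frozen-coefficient Euler interpolation), so that after using subadditivity the $dB$-integrals, being symmetric $G$-martingale increments (likewise $\xi\,[(\Delta B)^{2}-\Delta\langle B\rangle]$ for $\mathcal{F}_{t_{n}}$-measurable $\xi$, since it is again a $G$-It\^{o} integral), have vanishing conditional $G$-expectation and drop out entirely; what remains are only $ds$- and $d\langle B\rangle$-integrals whose integrands are $O((\Delta t)^{1/2})$ in expectation, and these do integrate to $O((\Delta t)^{3/2})$. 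With that one-sided bound your It\^{o}-plus-Taylor pairing (or the paper's two-It\^{o}-expansions variant) goes through; without it, the claimed rate cannot be reached.
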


\begin{proof}
1. Recalling the property of $\langle B\rangle_{.}$ (see
\cite{P2010}, Corollary 3.5.5)
\begin{equation}
\underline{\sigma}^{2}(t-s)\leq \langle B\rangle_{t}-\langle B\rangle_{s}%
\leq \overline{\sigma}^{2}(t-s),\text{ \ for }0\leq s\leq t<\infty, \label{5.2}%
\end{equation}
one has%
\begin{align}
|\hat{R}_{y_{1}}^{n}|  &  \leq \mathbb{\hat{E}}_{t_{n}}^{X^{n}}\left[
\left \vert \int_{t_{n}}^{t_{n+1}}(f_{t_{n+1}}^{t_{n},X^{n}}-f_{t}^{t_{n}%
,X^{n}})dt+\int_{t_{n}}^{t_{n+1}}(g_{t_{n+1}}^{t_{n},X^{n}}-g_{t}^{t_{n}%
,X^{n}})d\langle B\rangle_{t}\right \vert \right] \label{5.1}\\
&  \leq \mathbb{\hat{E}}_{t_{n}}^{X^{n}}\left[  \int_{t_{n}}^{t_{n+1}%
}|f_{t_{n+1}}^{t_{n},X^{n}}-f_{t}^{t_{n},X^{n}}|dt\right]  +C\mathbb{\hat{E}%
}_{t_{n}}^{X^{n}}\left[  \int_{t_{n}}^{t_{n+1}}|g_{t_{n+1}}^{t_{n},X^{n}%
}-g_{t}^{t_{n},X^{n}}|dt\right]  .\nonumber
\end{align}
Under (A1), it is easy to check that
\begin{equation}
\mathbb{\hat{E}}_{t_{n}}^{X^{n}}\left[  |X_{t}^{t_{n},X^{n}}-X_{s}%
^{t_{n},X^{n}}|^{2}\right]  \leq C\Delta t,\text{ \ for }t_{n}\leq s\leq t\leq
t_{n+1}. \label{5.0}%
\end{equation}
Then, from the Lipschitz continuity of $f,u,D_{x}u$, and $\sigma$, we can
derive that
\begin{equation}
\mathbb{\hat{E}}_{t_{n}}^{X^{n}}\left[  |f_{t_{n+1}}^{t_{n},X^{n}}%
-f_{t}^{t_{n},X^{n}}|^{2}\right]  \leq C\left(  \left \vert t_{n+1}%
-t\right \vert +\mathbb{\hat{E}}_{t_{n}}^{X^{n}}\left[  |X_{t_{n+1}}%
^{t_{n},X^{n}}-X_{t}^{t_{n},X^{n}}|^{2}\right]  \right)  \leq C\Delta t,
\label{5.3.1}%
\end{equation}
and similarly,
\begin{equation}
\mathbb{\hat{E}}_{t_{n}}^{X^{n}}\left[  |g_{t_{n+1}}^{t_{n},X^{n}}%
-g_{t}^{t_{n},X^{n}}|^{2}\right]  \leq C\Delta t. \label{5.3.2}%
\end{equation}
Substituting (\ref{5.3.1})-(\ref{5.3.2}) into (\ref{5.1}), by the
Cauchy-Schwarz inequality, it is easy to check that $|\hat{R}_{y_{1}}^{n}%
|\leq$ $C(\Delta t)^{\frac{3}{2}}$.

2. From the sublinear property of $\mathbb{\hat{E}}_{t_{n}}^{X^{n}}[\cdot]$, we have
\begin{equation}
|\hat{R}_{y_{2}}^{n}|\leq \mathbb{\hat{E}}_{t_{n}}^{X^{n}}\left[  M_{t_{n+1}%
}\right]  \vee \mathbb{\hat{E}}_{t_{n}}^{X^{n}}\left[  -M_{t_{n+1}}\right]  ,
\label{5.4}%
\end{equation}
where%
\[
M_{t_{n+1}}=Y_{t_{n+1}}^{t_{n},X^{n}}-Y_{t_{n+1}}^{t_{n+1},X^{n+1}%
}+(f_{t_{n+1}}^{t_{n},X^{n}}-f_{t_{n+1}}^{t_{n+1},X^{n+1}})\Delta
t+(g_{t_{n+1}}^{t_{n},X^{n}}-g_{t_{n+1}}^{t_{n+1},X^{n+1}})\Delta \langle
B\rangle_{n+1}.
\]
Next, we only bound the first term on the right-hand side of (\ref{5.4}), and
the second term can be similarly obtained.\ Define the continuous-time
approximation
\[
\tilde{X}_{t}^{t_{n},X^{n}}=X^{n}+\int_{t_{n}}^{t}b(t_{n},X^{n})dt+\int
_{t_{n}}^{t}h(t_{n},X^{n})d\langle B\rangle_{t}+\int_{t_{n}}^{t}\sigma
(t_{n},X^{n})dB_{t},
\]
for $t\in \left[  t_{n},t_{n+1}\right]  $. Similar to (\ref{5.0}), we have for
$t_{n}\leq s\leq t\leq t_{n+1},$
\begin{equation}
\mathbb{\hat{E}}_{t_{n}}^{X^{n}}\left[  |\tilde{X}_{t}^{t_{n},X^{n}}-\tilde
{X}_{s}^{t_{n},X^{n}}|^{2}\right]  \leq C\Delta t. \label{5.6}%
\end{equation}
Seeing that $Y_{t_{n+1}}^{t_{n},X^{n}}=u(t_{n+1},X_{t_{n+1}}^{t_{n},X^{n}})$
and $Y_{t_{n+1}}^{t_{n+1},X^{n+1}}=u(t_{n+1},\tilde{X}_{t_{n+1}}^{t_{n},X^{n}%
})$, by $G$-It\^{o}'s formula and (\ref{5.2}), we get
\begin{align}
&  \mathbb{\hat{E}}_{t_{n}}^{X^{n}}\left[  Y_{t_{n+1}}^{t_{n},X^{n}%
}-Y_{t_{n+1}}^{t_{n+1},X^{n+1}}\right] \label{5.5}\\
&  \leq \mathbb{\hat{E}}_{t_{n}}^{X^{n}}\left[  \int_{t_{n}}^{t_{n+1}}%
\{L^{0}u(t,X_{t}^{t_{n},X^{n}})-\tilde{L}^{0}u(t,\tilde{X}_{t}^{t_{n},X^{n}%
})\}dt\right]  \text{ }\nonumber \\
&  \text{ \  \ }+\mathbb{\hat{E}}_{t_{n}}^{X^{n}}\left[  \int_{t_{n}}^{t_{n+1}%
}\{L^{1}u(t,X_{t}^{t_{n},X^{n}})-\tilde{L}^{1}u(t,\tilde{X}_{t}^{t_{n},X^{n}%
})\}d\langle B\rangle_{t}\right]  ,\text{ \  \  \  \  \  \ }\nonumber
\end{align}
where%
\begin{align*}
L^{0}  &  =\partial_{t}+\sum_{i=1}^{m}b_{i}\left(  t,X_{t}\right)
\partial_{x_{i}}\text{, \ }L^{1}=\sum_{i=1}^{m}h_{i}\left(  t,X_{t}\right)
\partial_{x_{i}}+\frac{1}{2}\sum_{i,j=1}^{m}[\sigma \sigma^{\top}]_{i,j}\left(
t,X_{t}\right)  \partial_{x_{i}x_{j}}^{2}\text{,}\\
\tilde{L}^{0}  &  =\partial_{t}+\sum_{i=1}^{m}b_{i}\left(  t_{n},X^{n}\right)
\partial_{x_{i}}\text{, \ }\tilde{L}^{1}=\sum_{i=1}^{m}h_{i}\left(
t_{n},X^{n}\right)  \partial_{x_{i}}+\frac{1}{2}\sum_{i,j=1}^{m}[\sigma
\sigma^{\top}]_{i,j}\left(  t_{n},X^{n}\right)  \partial_{x_{i}x_{j}}^{2}\text{.}
\end{align*}
From the definition of $L^{0}$ and $\tilde{L}^{0}$, we can obtain%
\begin{align}
&  \mathbb{\hat{E}}_{t_{n}}^{X^{n}}\left[  \int_{t_{n}}^{t_{n+1}}%
\{L^{0}u(t,X_{t}^{t_{n},X^{n}})-\tilde{L}^{0}u(t,\tilde{X}_{t}^{t_{n},X^{n}%
})\}dt\right] \label{5.7}\\
&  \leq \mathbb{\hat{E}}_{t_{n}}^{X^{n}}\left[  \int_{t_{n}}^{t_{n+1}}\{
\partial_{t}u(t,X_{t}^{t_{n},X^{n}})-\partial_{t}u(t,\tilde{X}_{t}%
^{t_{n},X^{n}})\}dt\right] \nonumber \\
&  \text{ \  \ }+\mathbb{\hat{E}}_{t_{n}}^{X^{n}}\left[  \int_{t_{n}}^{t_{n+1}%
}D_{x}u(t,X_{t}^{t_{n},X^{n}})\{b(t,X_{t}^{t_{n},X^{n}})-b(t_{n}%
,X^{n})\}dt\right] \nonumber \\
&  \text{ \  \ }+\mathbb{\hat{E}}_{t_{n}}^{X^{n}}\left[  \int_{t_{n}}^{t_{n+1}%
}\{D_{x}u(t,X_{t}^{t_{n},X^{n}})-D_{x}u(t,\tilde{X}_{t}^{t_{n},X^{n}%
})\}b(t_{n},X^{n})dt\right]  .\nonumber
\end{align}
Under (A1)-(A2), by (\ref{5.0}), (\ref{5.6}) and the
Cauchy-Schwarz inequality, we have
\begin{align}
&  \mathbb{\hat{E}}_{t_{n}}^{X^{n}}\left[  \int_{t_{n}}^{t_{n+1}}%
\{L^{0}u(t,X_{t}^{t_{n},X^{n}})-\tilde{L}^{0}u(t,\tilde{X}_{t}^{t_{n},X^{n}%
})\}dt\right] \label{5.9.1}\\
&  \leq C\int_{t_{n}}^{t_{n+1}}\left \{  \mathbb{\hat{E}}_{t_{n}}^{X^{n}%
}\left[  |X_{t}^{t_{n},X^{n}}-X^{n}|\right]  +\mathbb{\hat{E}}_{t_{n}}^{X^{n}%
}\left[  |\tilde{X}_{t}^{t_{n},X^{n}}-X^{n}|\right]  \right \}  dt\text{
\  \  \ }\nonumber \\
&  \leq C(\Delta t)^{\frac{3}{2}}.\nonumber
\end{align}
In the same way as above, we can obtain
\begin{equation}
\mathbb{\hat{E}}_{t_{n}}^{X^{n}}\left[  \int_{t_{n}}^{t_{n+1}}\{L^{1}%
u(t,X_{t}^{t_{n},X^{n}})-\tilde{L}^{1}u(t,\tilde{X}_{t}^{t_{n},X^{n}%
})\}d\langle B\rangle_{t}\right]  \leq C(\Delta t)^{\frac{3}{2}}.\text{ }
\label{5.9.2}%
\end{equation}
Together with (\ref{5.5}), (\ref{5.9.1}), and (\ref{5.9.2}), we have
\[
\mathbb{\hat{E}}_{t_{n}}^{X^{n}}\left[  Y_{t_{n+1}}^{t_{n},X^{n}}-Y_{t_{n+1}%
}^{t_{n+1},X^{n+1}}\right]  \leq C(\Delta t)^{\frac{3}{2}}.
\]
Using the Lipschitz condition of $f,g$, and $u$, we can also estimate%
\[%
\begin{array}
[c]{rr}%
\mathbb{\hat{E}}_{t_{n}}^{X^{n}}\left[  (f_{t_{n+1}}^{t_{n},X^{n}}-f_{t_{n+1}
}^{t_{n+1},X^{n+1}})\Delta t\right]  \leq C(\Delta t)^{\frac{3}{2}}, &
\mathbb{\hat{E}}_{t_{n}}^{X^{n}}\left[  (g_{t_{n+1}}^{t_{n},X^{n}}-g_{t_{n+1}
}^{t_{n+1},X^{n+1}})\Delta \langle B\rangle_{n+1}\right]  \leq C(\Delta
t)^{\frac{3}{2}}.
\end{array}
\]
Thus our conclusion follows.
\end{proof}

\begin{lemma}
\label{lemma3}Suppose \emph{(A1)}-\emph{(A2)} hold. Let $R_{z_{1}}^{n,\sigma
},R_{z_{2}}^{n,\sigma}$, and $R_{z_{3}}^{n,\sigma}$, $\sigma \in \left \{
\underline{\sigma},\bar{\sigma}\right \}  $, be the truncation errors defined
in \eqref{2.10}, \eqref{2.11}, and \eqref{2.14}, respectively. Then, for
$n=0,1,\ldots,N-1,$%
\[%
\begin{array}
[c]{lll}%
|R_{z_{1}}^{n,\sigma}|\leq C(\Delta t)^{\frac{3}{2}},\text{\ } & |R_{z_{2}%
}^{n,\sigma}|\leq C(\Delta t)^{\frac{3}{2}},\text{ } & |R_{z3}^{n,\sigma}|\leq
C(\Delta t)^{\frac{3}{2}}.
\end{array}
\]

\end{lemma}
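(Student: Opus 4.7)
My strategy is to treat the three truncation errors separately, exploiting in each case that under the probability measure $P^{\sigma}\in\mathcal{P}$ the canonical process $B$ is a classical Brownian motion with $\mathbb{E}_{P^{\sigma}}[B_{t}^{2}]=\sigma^{2}t$, so that $\langle B\rangle_{t}=\sigma^{2}t$ holds $P^{\sigma}$-a.s., and any random variable measurable at time $t_{n}$ contributes zero when multiplied by $\Delta B_{n+1}$ inside $\mathbb{E}_{t_{n}}^{\sigma,X^{n}}[\cdot]$.

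The easiest piece is $R_{z_{3}}^{n,\sigma}$. By the Feynman--Kac representation in Remark \ref{F-K represent}, $Z_{t}^{t_{n},X^{n}}=D_{x}u(t,X_{t}^{t_{n},X^{n}})^{\top}\sigma(t,X_{t}^{t_{n},X^{n}})$, and assumption (A2) makes $(t,x)\mapsto D_{x}u(t,x)^{\top}\sigma(t,x)$ H\"{o}lder-$1/2$ in $t$ and Lipschitz in $x$. Combined with the one-step moment bound $\mathbb{E}_{t_{n}}^{\sigma,X^{n}}[|X_{t}^{t_{n},X^{n}}-X^{n}|^{2}]\leq C\Delta t$ already used in Lemma \ref{lemma2}, this gives $\mathbb{E}_{t_{n}}^{\sigma,X^{n}}[|Z_{t}^{t_{n},X^{n}}-Z_{t_{n}}^{t_{n},X^{n}}|]\leq C\sqrt{\Delta t}$, and integrating over $[t_{n},t_{n+1}]$ yields $|R_{z_{3}}^{n,\sigma}|\leq C(\Delta t)^{3/2}$.

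For $R_{z_{2}}^{n,\sigma}$, I rewrite $Y_{t_{n+1}}^{t_{n},X^{n}}-Y_{t_{n+1}}^{t_{n+1},X^{n+1}}=u(t_{n+1},X_{t_{n+1}}^{t_{n},X^{n}})-u(t_{n+1},X^{n+1})$ using the Feynman--Kac representation. The crucial intermediate bound is the one-step strong Euler estimate
\begin{equation*}
\mathbb{E}_{t_{n}}^{\sigma,X^{n}}[|X_{t_{n+1}}^{t_{n},X^{n}}-X^{n+1}|^{2}]\leq C(\Delta t)^{2},
\end{equation*}
which is one order better than the usual global Euler rate because both processes start from $X^{n}$ at time $t_{n}$; it follows by freezing the coefficients $b,h,\sigma$ at $(t_{n},X^{n})$ and applying It\^{o}'s isometry together with the Lipschitz-plus-H\"{o}lder regularity in (A1). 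Cauchy--Schwarz against $\Delta B_{n+1}$, whose $L^{2}$-norm under $P^{\sigma}$ is $O(\sqrt{\Delta t})$, then yields $|R_{z_{2}}^{n,\sigma}|\leq C\Delta t\cdot \sqrt{\Delta t}=C(\Delta t)^{3/2}$.

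The delicate piece is $R_{z_{1}}^{n,\sigma}$; the only real obstruction is the $K$ term, a decreasing $G$-martingale with no a priori density. I will circumvent this by invoking Remark \ref{F-K represent}, which supplies the explicit form $K_{t_{n+1}}^{t_{n},X^{n}}-K_{t_{n}}^{t_{n},X^{n}}=\tfrac{1}{2}\int_{t_{n}}^{t_{n+1}}H_{s}\,d\langle B\rangle_{s}-\int_{t_{n}}^{t_{n+1}}G(H_{s})\,ds$ with $H_{s}:=H(D_{x}^{2}u,D_{x}u,u,X_{s}^{t_{n},X^{n}},s)$. Under $P^{\sigma}$ this collapses to $\int_{t_{n}}^{t_{n+1}}(\tfrac{\sigma^{2}}{2}H_{s}-G(H_{s}))\,ds$, and the same trick applies simultaneously to the $f\,dt$ and $g\,d\langle B\rangle$ pieces: replacing each integrand by its value frozen at $t_{n}$ produces a factor that is measurable at $t_{n}$, and whose product with $\Delta B_{n+1}$ therefore vanishes under $\mathbb{E}_{t_{n}}^{\sigma,X^{n}}$. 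The remainder is a time integral of an $L^{2}$-increment of size $O(\sqrt{\Delta t})$, coming from the regularity of $u,D_{x}u,D_{x}^{2}u$ provided by (A2), the Lipschitz continuity of $G,f,g$, and the moment bound of $X_{s}^{t_{n},X^{n}}-X^{n}$. Cauchy--Schwarz then bounds each of the three pieces by $\Delta t\cdot \sqrt{\Delta t}\cdot \sqrt{\Delta t}=(\Delta t)^{2}$, which is comfortably within the required $C(\Delta t)^{3/2}$. The main technical hurdle is the bookkeeping around $K$; once the explicit representation from Remark \ref{F-K represent} is substituted, the analysis closely parallels Lemma \ref{lemma2}.
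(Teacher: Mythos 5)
Your proof is correct and follows essentially the same route the paper indicates (the paper's own proof is a one-line reference to Remark \ref{F-K represent} and the techniques of Lemma \ref{lemma2}): you use the Feynman--Kac representation of $Z$ and $K$, the regularity supplied by (A1)--(A2), the one-step moment bounds, and the vanishing of $\mathbb{E}_{t_{n}}^{\sigma,X^{n}}[\xi\,\Delta B_{n+1}]$ for $\xi$ known at time $t_{n}$. In fact you supply the details the paper omits, and your observation that the extra $\Delta B_{n+1}$ factor of size $O(\sqrt{\Delta t})$ lets a crude Cauchy--Schwarz with the one-step Euler $L^{2}$ bound replace the finer It\^{o}-formula cancellation argument of Lemma \ref{lemma2} is a small but genuine simplification.
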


\begin{proof}
Under (A1)-(A2) and Remark \ref{F-K represent}, the estimates of
$R_{z_{1}}^{n,\sigma},R_{z_{2}}^{n,\sigma}$, and $R_{z3}^{n,\sigma}$ can be
obtained similarly to Lemma \ref{lemma2}.
\end{proof}

For given $(t_{n},X^{n})$, we define the following functions
\begin{equation}%
\begin{array}
[c]{l}%
\xi_{n}\left(  x,y;X^{n}\right)  =X^{n}+b\left(  t_{n},X^{n}\right)  \Delta
t+\sigma \left(  t_{n},X^{n}\right)  x+h\left(  t_{n},X^{n}\right)  y,\\
u_{n}\left(  x,y;X^{n}\right)  =u\left(  t_{n+1},\xi_{n}\left(  x,y;X^{n}%
\right)  \right)  ,\\
v_{n}\left(  x,y;X^{n}\right)  =D_{x}u\left(  t_{n+1},\xi_{n}\left(
x,y;X^{n}\right)  \right)  \sigma \left(  t_{n+1},\xi_{n}\left(  x,y;X^{n}%
\right)  \right)  ,\\
f_{n}\left(  x,y;X^{n}\right)  =f\left(  t_{n+1},\xi_{n}\left(  x,y;X^{n}%
\right)  ,u_{n}\left(  x,y;X^{n}\right)  ,v_{n}\left(  x,y;X^{n}\right)
\right)  ,\\
g_{n}\left(  x,y;X^{n}\right)  =g\left(  t_{n+1},\xi_{n}\left(  x,y;X^{n}%
\right)  ,u_{n}\left(  x,y;X^{n}\right)  ,v_{n}\left(  x,y;X^{n}\right)
\right)  ,\\
\varphi_{n}\left(  x,y;X^{n}\right)  =u_{n}\left(  x,y;X^{n}\right)
+f_{n}\left(  x,y;X^{n}\right)  \Delta t+g_{n}\left(  x,y;X^{n}\right)  y.
\end{array}
\label{7.0}%
\end{equation}
For convenience, we will omit $X^{n}$ in the following proof, if no ambiguity arises.

\begin{lemma}
\label{lemma4}Suppose \emph{(A1)}-\emph{(A2)} hold. Let $\hat{R}_{y_{3}}^{n}$
be the error defined in \eqref{2.8}. Then, for $n=0,1,\ldots,N-1,$
\[
|\hat{R}_{y_{3}}^{n}|\leq C(\Delta t)^{\frac{3}{2}}.
\]

\end{lemma}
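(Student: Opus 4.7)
The plan is to Taylor expand $\varphi_n$ defined in \eqref{7.0} around $(x, y) = (0, 0)$ and to exploit the following structural identity: for polynomials of total degree at most two in $(\Delta B_{n+1}, \Delta \langle B\rangle_{n+1})$, the sublinear expectation $\mathbb{\hat{E}}_{t_n}^{X^n}$ and its approximation $\mathbb{\tilde{E}}_{t_n}^{X^n}$ coincide. Consequently only the Taylor remainder will contribute to $\hat{R}_{y_3}^n$.

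By (A2) the function $u$ lies in $C_b^{1+1/2, 2+1}$, so $D_x^2 u$ is Lipschitz in $x$. A second-order Taylor expansion of $u_n(x, y) = u(t_{n+1}, X^n + b\Delta t + \sigma x + hy)$ yields
\[
u_n(x, y) = u_n(0,0) + u_{n,x}(0,0) x + u_{n,y}(0,0) y + \tfrac{1}{2} u_{n,xx}(0,0) x^2 + R_u(x, y),
\]
with $|R_u(x, y)| \le C(|x|^3 + |y|^3)$. For the other two components of $\varphi_n$, the Lipschitz continuity of $f, g$ in (A1) (together with the smoothness of $u_n, v_n$) gives $f_n(x, y)\Delta t = f_n(0, 0)\Delta t + O((|x| + |y|)\Delta t)$ and $g_n(x, y) y = g_n(0, 0) y + O((|x|+|y|) y)$. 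Collecting the dominant contributions,
\[
\varphi_n(x, y) = D + A x + B y + C x^2 + R(x, y),
\]
where $A, B, C, D$ depend only on $X^n$ and are explicit in terms of $u, D_x u, D_x^2 u$ evaluated at $(t_{n+1}, X^n + b\Delta t)$ together with $f_n(0,0)$ and $g_n(0,0)$, while $|R(x, y)| \le C(|x|^3 + |y|^3 + |x| y + y^2 + (|x| + |y|)\Delta t)$.

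Next, I would establish that $\mathbb{\hat{E}}_{t_n}^{X^n}[p(\Delta B_{n+1}, \Delta\langle B\rangle_{n+1})] = \mathbb{\tilde{E}}_{t_n}^{X^n}[p(\Delta B_{n+1}, \Delta\langle B\rangle_{n+1})]$ for the polynomial $p(x, y) := D + Ax + By + Cx^2$. By Theorem \ref{E repesent}, the left-hand side equals $\sup_{P \in \mathcal{P}} \mathbb{E}_P[p \mid \mathcal{F}_{t_n}]$. Under every $P \in \mathcal{P}$, $B$ is a continuous martingale, so $\mathbb{E}_P[\Delta B_{n+1} \mid \mathcal{F}_{t_n}] = 0$; and since $B_t^2 - \langle B\rangle_t$ is a $P$-martingale, the It\^o isometry gives $\mathbb{E}_P[(\Delta B_{n+1})^2 \mid \mathcal{F}_{t_n}] = \mathbb{E}_P[\Delta \langle B\rangle_{n+1} \mid \mathcal{F}_{t_n}]$. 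Therefore $\mathbb{E}_P[p \mid \mathcal{F}_{t_n}] = D + (B + C)\, \mathbb{E}_P[\Delta \langle B\rangle_{n+1} \mid \mathcal{F}_{t_n}]$, with $\mathbb{E}_P[\Delta \langle B\rangle_{n+1} \mid \mathcal{F}_{t_n}] \in [\underline{\sigma}^2 \Delta t, \overline{\sigma}^2 \Delta t]$ by non-degeneracy of $G$. The supremum over $\mathcal{P}$ is therefore attained at the appropriate endpoint, which is precisely the value produced by either $P^{\underline{\sigma}}$ or $P^{\overline{\sigma}}$, and so equals $\mathbb{\tilde{E}}_{t_n}^{X^n}[p]$.

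Finally, the sublinearity $|\mathbb{\hat{E}}_{t_n}^{X^n}[\varphi_n] - \mathbb{\hat{E}}_{t_n}^{X^n}[p]| \le \mathbb{\hat{E}}_{t_n}^{X^n}[|R|]$ (and likewise for $\mathbb{\tilde{E}}_{t_n}^{X^n}$), together with the moment bounds $\mathbb{\hat{E}}_{t_n}^{X^n}[|\Delta B_{n+1}|^q] \le C(\Delta t)^{q/2}$ and $0 \le \Delta \langle B\rangle_{n+1} \le \overline{\sigma}^2 \Delta t$, give $\mathbb{\hat{E}}_{t_n}^{X^n}[|R|] \le C(\Delta t)^{3/2}$ by the Cauchy-Schwarz inequality; the analogous bound under $\mathbb{\tilde{E}}_{t_n}^{X^n}$ follows from the standard Gaussian moments under each $P^\sigma$. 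Combining these estimates yields $|\hat{R}_{y_3}^n| \le C(\Delta t)^{3/2}$. The main obstacle is the polynomial-agreement step: it hinges on the representation theorem and the identity $\mathbb{E}_P[(\Delta B)^2 \mid \mathcal{F}_{t_n}] = \mathbb{E}_P[\Delta \langle B\rangle \mid \mathcal{F}_{t_n}]$ under every $P \in \mathcal{P}$, which collapses the apparent two-dimensional dependence on $(\Delta B, \Delta \langle B\rangle)$ onto the single parameter $\mathbb{E}_P[\Delta \langle B\rangle] \in [\underline{\sigma}^2 \Delta t, \overline{\sigma}^2 \Delta t]$, whose endpoints are precisely attained by $P^{\underline{\sigma}}$ and $P^{\overline{\sigma}}$.
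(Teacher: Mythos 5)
Your argument is correct in substance, but it takes a different route from the paper's. The paper applies the $G$-It\^{o} formula to $u_{n}(B_{\Delta t},\langle B\rangle_{\Delta t})$ and shows that \emph{both} $\mathbb{\hat{E}}_{t_{n}}^{X^{n}}[\varphi_{n}]$ and $\mathbb{\tilde{E}}_{t_{n}}^{X^{n}}[\varphi_{n}]$ equal $\varphi_{n}(0,0)+G\bigl(2g_{n}(0,0)+2\partial_{y}u_{n}(0,0)+\partial_{xx}^{2}u_{n}(0,0)\bigr)\Delta t$ up to $\mathcal{O}((\Delta t)^{3/2})$, using $\mathbb{\hat{E}}[\langle B\rangle_{\Delta t}]=\bar{\sigma}^{2}\Delta t$, $-\mathbb{\hat{E}}[-\langle B\rangle_{\Delta t}]=\underline{\sigma}^{2}\Delta t$, and $\langle B\rangle_{\Delta t}=\sigma^{2}\Delta t$ under $P^{\sigma}$. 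You instead perform a deterministic Taylor expansion of $\varphi_{n}$ and prove \emph{exact} agreement of the two expectations on the quadratic $D+Ax+By+Cx^{2}$ via the representation theorem together with $\mathbb{E}_{P}[\Delta B_{n+1}]=0$ and $\mathbb{E}_{P}[(\Delta B_{n+1})^{2}]=\mathbb{E}_{P}[\Delta\langle B\rangle_{n+1}]\in[\underline{\sigma}^{2}\Delta t,\bar{\sigma}^{2}\Delta t]$ under every $P\in\mathcal{P}$, so that the supremum of a linear function of this single parameter is attained at an endpoint, which is realized by $P^{\underline{\sigma}}$ or $P^{\overline{\sigma}}$. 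Both proofs rest on the same mechanism (the leading term is linear in the variance, so the two-point supremum reproduces $G$); yours avoids the $G$-It\^{o} formula, while the paper's avoids any appeal to a representation of the \emph{conditional} expectation. Three points you should tighten, none fatal: (i) Theorem \ref{E repesent} is stated unconditionally, so either first reduce to an unconditional expectation of a function of the increment, using independence and stationarity of $G$-Brownian increments as the paper does via $\varphi_{n}(\Delta B_{n+1},\Delta\langle B\rangle_{n+1})\overset{d}{=}\varphi_{n}(B_{\Delta t},\langle B\rangle_{\Delta t})$ with $X^{n}$ frozen, or invoke the known conditional (ess-sup) form of the representation; (ii) the blanket claim that the two expectations coincide on all polynomials of total degree two is false (e.g.\ the monomial $xy$ can have nonzero $G$-mean), but your argument only uses the specific form $D+Ax+By+Cx^{2}$, for which it is true; (iii) with only the $x^{2}$ term retained, the intermediate bound $|R_{u}(x,y)|\leq C(|x|^{3}+|y|^{3})$ is not literally correct since it omits the $xy$ and $y^{2}$ contributions, but your final remainder bound does include $|x|y$ and $y^{2}$, and these give $\mathcal{O}((\Delta t)^{3/2})$ and $\mathcal{O}((\Delta t)^{2})$ because $0\leq\Delta\langle B\rangle_{n+1}\leq\bar{\sigma}^{2}\Delta t$, so the final estimate $|\hat{R}_{y_{3}}^{n}|\leq C(\Delta t)^{3/2}$ stands.
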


\begin{proof}
Noting that
\[
\varphi_{n}\left(  \Delta B_{n+1},\Delta \langle B\rangle_{n+1}\right)
\overset{d}{=}\varphi_{n}\left(  B_{\Delta t},\langle B\rangle_{\Delta
t}\right)  ,
\]
in view of (\ref{7.0}), we can rewrite $\hat{R}_{y_{3}}^{n}$ as%
\begin{equation}
\hat{R}_{y_{3}}^{n}=\mathbb{\hat{E}}_{t_{n}}^{X^{n}}\left[  \varphi_{n}\left(
B_{\Delta t},\langle B\rangle_{\Delta t}\right)  \right]  -\mathbb{\tilde{E}%
}_{t_{n}}^{X^{n}}\left[  \varphi_{n}\left(  B_{\Delta t},\langle
B\rangle_{\Delta t}\right)  \right]  . \label{5.8}%
\end{equation}
Applying $G$-It\^{o}'s formula to $u_{n}\left(  B_{\Delta t},\langle
B\rangle_{\Delta t}\right)  $, we have%
\begin{align*}
u_{n}\left(  B_{\Delta t},\langle B\rangle_{\Delta t}\right)   &
=u_{n}\left(  0,0\right)  +\int_{0}^{\Delta t}\partial_{x}u_{n}\left(
B_{s},\langle B\rangle_{s}\right)  dB_{s}\\
&  \text{ \  \ }+\int_{0}^{\Delta t}[\partial_{y}u_{n}\left(  B_{s},\langle
B\rangle_{s}\right)  +\frac{1}{2}\partial_{xx}^{2}u_{n}\left(  B_{s},\langle
B\rangle_{s}\right)  ]d\langle B\rangle_{s}.\text{ \ }%
\end{align*}
Noting that $\mathbb{\hat{E}}\left[  \langle B\rangle_{\Delta t}\right]
=\bar{\sigma}^{2}\Delta t$ and $-\mathbb{\hat{E}}\left[  -\langle
B\rangle_{\Delta t}\right]  =\underline{\sigma}^{2}\Delta t$, we obtain
\begin{align}
&  \mathbb{\hat{E}}_{t_{n}}^{X^{n}}\left[  \varphi_{n}\left(  B_{\Delta
t},\langle B\rangle_{\Delta t}\right)  \right] \label{5.9}\\
&  =\varphi_{n}\left(  0,0\right)  +\mathbb{\hat{E}}_{t_{n}}^{X^{n}}\left[
\{g_{n}\left(  0,0\right)  +\partial_{y}u_{n}\left(  0,0\right)  +\frac{1}%
{2}\partial_{xx}^{2}u_{n}\left(  0,0\right)  \} \langle B\rangle_{\Delta
t}\right]  +\mathcal{E}_{n}\nonumber \\
&  =\varphi_{n}\left(  0,0\right)  +G\left(  2g_{n}\left(  0,0\right)
+2\partial_{y}u_{n}\left(  0,0\right)  +\partial_{xx}^{2}u_{n}\left(
0,0\right)  \right)  \Delta t+\mathcal{E}_{n},\nonumber
\end{align}
where%
\begin{align}
\mathcal{E}_{n}  &  =\mathbb{\hat{E}}_{t_{n}}^{X^{n}}\bigg[\int_{0}^{\Delta
t}\{ \partial_{y}u_{n}\left(  B_{s},\langle B\rangle_{s}\right)  +\frac{1}%
{2}\partial_{xx}^{2}u_{n}\left(  B_{s},\langle B\rangle_{s}\right)  \}d\langle
B\rangle_{s}\label{5.10}\\
&  \text{ \  \ }+g_{n}\left(  B_{\Delta t},\langle B\rangle_{\Delta t}\right)
\langle B\rangle_{\Delta t}+f_{n}\left(  B_{\Delta t},\langle B\rangle_{\Delta
t}\right)  \Delta t\bigg]-f_{n}\left(  0,0\right)  \Delta t\text{
\  \  \  \  \  \ }\nonumber \\
&  \text{ \  \ }-\mathbb{\hat{E}}_{t_{n}}^{X^{n}}\left[  \{g_{n}\left(
0,0\right)  +\partial_{y}u_{n}\left(  0,0\right)  +\frac{1}{2}\partial
_{xx}^{2}u_{n}\left(  0,0\right)  \} \langle B\rangle_{\Delta t}\right]
.\nonumber
\end{align}
With the help of (\ref{5.2}), one can check that%
\begin{align*}
\left \vert \mathcal{E}_{n}\right \vert  &  \leq C\mathbb{\hat{E}}_{t_{n}%
}^{X^{n}}\left[  \int_{0}^{\Delta t}\left \vert f_{n}(B_{\Delta t},\langle
B\rangle_{\Delta t})-f_{n}\left(  0,0\right)  \right \vert +|g_{n}(B_{\Delta
t},\langle B\rangle_{\Delta t})-g_{n}\left(  0,0\right)  |ds\right] \\
&  \text{ \  \ }+C\mathbb{\hat{E}}_{t_{n}}^{X^{n}}\left[  \int_{0}^{\Delta
t}\left \vert \partial_{y}u_{n}(B_{s},\langle B\rangle_{s})-\partial_{y}%
u_{n}\left(  0,0\right)  \right \vert ds\right] \\
&  \text{ \  \ }+C\mathbb{\hat{E}}_{t_{n}}^{X^{n}}\left[  \int_{0}^{\Delta
t}\left \vert \partial_{xx}^{2}u_{n}(B_{s},\langle B\rangle_{s})-\partial
_{xx}^{2}u_{n}\left(  0,0\right)  \right \vert ds\right]  .
\end{align*}
In view of the assumptions (A1)-(A2), we get $\vert \mathcal{E}_{n}\vert \leq
C(\Delta t)^{3/2}$.\ Thus
\begin{align}
&  \mathbb{\hat{E}}_{t_{n}}^{X^{n}}\left[  \varphi_{n}\left(  B_{\Delta
t},\langle B\rangle_{\Delta t}\right)  \right] \label{5.12}\\
&  =\varphi_{n}\left(  0,0\right)  +G\left(  2g_{n}\left(  0,0\right)
+2\partial_{y}u_{n}\left(  0,0\right)  +\partial_{xx}^{2}u_{n}\left(
0,0\right)  \right)  \Delta t+\mathcal{O}(\Delta t)^{\frac{3}{2}}.\quad
\quad \quad \quad \nonumber
\end{align}
Similarly, we have
\begin{align}
&  \mathbb{\tilde{E}}_{t_{n}}^{X^{n}}\left[  \varphi_{n}\left(  B_{\Delta
t},\langle B\rangle_{\Delta t}\right)  \right] \label{5.13}\\
&  =\varphi_{n}\left(  0,0\right)  +\sup_{\sigma \in \left \{  \underline{\sigma
},\bar{\sigma}\right \}  }\mathbb{E}_{t_{n}}^{\sigma,X^{n}}\left[
\{g_{n}\left(  0,0\right)  +\partial_{y}u_{n}\left(  0,0\right)  +\frac{1}%
{2}\partial_{xx}^{2}u_{n}\left(  0,0\right)  \} \langle B\rangle_{\Delta
t}\right]  +\mathcal{\tilde{E}}_{n}\nonumber \\
&  =\varphi_{n}\left(  0,0\right)  +G\left(  2g_{n}\left(  0,0\right)
+2\partial_{y}u_{n}\left(  0,0\right)  +\partial_{xx}^{2}u_{n}\left(
0,0\right)  \right)  \Delta t+\mathcal{\tilde{E}}_{n},\nonumber
\end{align}
where%
\begin{align*}
\mathcal{\tilde{E}}_{n}  &  =\mathbb{\tilde{E}}_{t_{n}}^{X^{n}}\bigg[\int
_{0}^{\Delta t}\{ \partial_{y}u_{n}\left(  B_{s},\langle B\rangle_{s}\right)
+\frac{1}{2}\partial_{xx}^{2}u_{n}\left(  B_{s},\langle B\rangle_{s}\right)
\}d\langle B\rangle_{s}\\
&  \text{ \  \ }+g_{n}\left(  B_{\Delta t},\langle B\rangle_{\Delta t}\right)
\langle B\rangle_{\Delta t}+f_{n}\left(  B_{\Delta t},\langle B\rangle_{\Delta
t}\right)  \Delta t\bigg]-f_{n}\left(  0,0\right)  \Delta t\\
&  \text{ \  \ }-\mathbb{\tilde{E}}_{t_{n}}^{X^{n}}\left[  \{g_{n}\left(
0,0\right)  +\partial_{y}u_{n}\left(  0,0\right)  +\frac{1}{2}\partial
_{xx}^{2}u_{n}\left(  0,0\right)  \} \langle B\rangle_{\Delta t}\right]  .
\end{align*}
By a simple calculation similar to $\mathcal{E}_{n}$, we have
$|\mathcal{\tilde{E}}_{n}|\leq C(\Delta t)^{3/2}$.
Together with (\ref{5.8}), (\ref{5.12}) and (\ref{5.13}), the desired result follows.
\end{proof}

\begin{lemma}
\label{lemma5}Suppose \emph{(A1)}-\emph{(A2)} hold. Let $R_{y}^{T,n}$ and
$R_{z}^{T,n,\sigma},\sigma \in \{ \underline{\sigma},\bar{\sigma}\}$ be the
errors defined in \eqref{3.7} and \eqref{3.8}, respectively. Then for
$n=0,1,\ldots,N-1,$
\[%
\begin{array}
[c]{rr}%
|R_{y}^{T,n}|\leq C(\Delta t)^{\frac{3}{2}},\text{ } & |R_{z}^{T,n,\sigma
}|\leq C(\Delta t)^{\frac{3}{2}}.
\end{array}
\]

\end{lemma}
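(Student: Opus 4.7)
The plan is to reduce each error to a pointwise comparison in $\sigma$ and then Taylor-expand. Using the contraction inequality $|\sup_\sigma a_\sigma-\sup_\sigma b_\sigma|\le\sup_\sigma|a_\sigma-b_\sigma|$, I would bound
\[
|R_y^{T,n}|\le\sup_{\sigma\in\{\underline{\sigma},\bar{\sigma}\}}\bigl|\mathbb{E}_{t_n}^{\sigma,X^n}[\varphi_n(B_{\Delta t},\langle B\rangle_{\Delta t})]-\mathbb{E}_{t_n}^{TR,\sigma,X^n}[\varphi_n(\Delta B_{n+1},\Delta\langle B\rangle_{n+1})]\bigr|,
\]
with $\varphi_n$ as in (\ref{7.0}). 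Under $P^\sigma$ the bracket process is deterministic, $\langle B\rangle_t=\sigma^2 t$, so both quantities are classical expectations of the same function: one evaluated at $(B_{\Delta t},\sigma^2\Delta t)$, the other at the three nodes $(\sqrt{\Delta t}\,q_i,\Delta t\,q_i^2)$ of (\ref{qi}).

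Next I would Taylor-expand $\varphi_n(x,y)=u_n(x,y)+f_n(x,y)\Delta t+g_n(x,y)\,y$ around $(0,0)$. Since $u\in C_b^{1+1/2,2+1}$ and $\xi_n$ is affine in $(x,y)$, the chain rule yields a second-order Taylor expansion of $u_n$ with integral remainder bounded by $C(|x|^3+|y|^3)$, while the Lipschitz continuity in (A1) together with the Lipschitz dependence of $v_n$ on $(x,y)$ gives $f_n(x,y)=f_n(0,0)+O(|x|+|y|)$ and similarly for $g_n$. Invoking the moment identities of Remark~\ref{remark2} together with the facts $\mathbb{E}^\sigma[B_{\Delta t}^{2k+1}]=0$ and $\mathbb{E}^\sigma[B_{\Delta t}^2]=\sigma^2\Delta t$, one verifies that both the classical and the trinomial expectation of $\varphi_n$ share the same $\mathcal{O}(\Delta t)$ expansion
\[
\varphi_n(0,0)+\sigma^2\Delta t\,\bigl(g_n(0,0)+\partial_y u_n(0,0)+\tfrac12\partial_{xx}^2 u_n(0,0)\bigr).
\]
Because $|q_i|\le 1$, the cubic-and-higher $u_n$-remainder contributes $O(\Delta t^{3/2})$, while the $f_n\Delta t$ and $g_n y$ fluctuations also contribute $O(\Delta t^{3/2})$ through the extra $\Delta t$ (resp.\ $y$) prefactor combined with one power of $\sqrt{\Delta t}$ from their Lipschitz expansion. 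Subtracting and taking the supremum over $\sigma$ then gives $|R_y^{T,n}|\le C\Delta t^{3/2}$.

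For $R_z^{T,n,\sigma}$ I would apply the same Taylor machinery to the product $\psi_n(x,y):=u_n(x,y)\,x$. Under $\mathbb{E}^\sigma$ the leading contribution is $\partial_x u_n(0,0)\cdot\mathbb{E}^\sigma[B_{\Delta t}^2]=\partial_x u_n(0,0)\,\sigma^2\Delta t$; under the trinomial rule, $\sum_i\omega_i^\sigma\sqrt{\Delta t}\,q_i\cdot u_n(\sqrt{\Delta t}\,q_i,\Delta t\,q_i^2)$ yields the same leading term, since $\sum_i\omega_i^\sigma q_i^2=\sigma^2$ while $\sum_i\omega_i^\sigma q_i^3=0$ by the symmetry $q_1=-q_3$ and $\omega_1^\sigma=\omega_3^\sigma$. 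The extra factor $|x|$ (equivalently $|B_{\Delta t}|$ or $\sqrt{\Delta t}\,|q_i|$) multiplies every subleading term by an additional $\sqrt{\Delta t}$, so the discrepancy is again $O(\Delta t^{3/2})$.

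The main obstacle I anticipate is the careful bookkeeping of remainders under the limited regularity: (A2) gives only $u\in C^{2,1}$ in $x$ and (A1) only Lipschitz $f,g$, so $\varphi_n$ cannot be Taylor-expanded to third order in $(x,y)$. The argument must therefore separate the smooth second-order expansion of $u_n$, which produces the shared leading $G(\cdot)\Delta t$ term, from the Lipschitz-level expansion of $f_n$ and $g_n$, and exploit the built-in $\Delta t$ and $y$ prefactors of those two pieces to absorb their crude estimates into the $O(\Delta t^{3/2})$ error, in the same spirit as the bounds on $\mathcal{E}_n$ and $\tilde{\mathcal{E}}_n$ in the proof of Lemma~\ref{lemma4}.
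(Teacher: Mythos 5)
Your proposal is correct, and its core mechanism coincides with the paper's: Taylor-expand $u_n$ to second order, use the Lipschitz property of $f_n,g_n$ together with their built-in $\Delta t$ and $y$ prefactors, and match moments through Remark \ref{remark2}, exactly as in the paper's expansion \eqref{5.14} of the trinomial side. Where you deviate is on the ``exact'' side and in the handling of the supremum: the paper keeps the sup over $\sigma\in\{\underline{\sigma},\bar{\sigma}\}$, reuses the expansion \eqref{5.13} obtained in Lemma \ref{lemma4} via the $G$-It\^{o} formula, and identifies both sides with $\varphi_{n}(0,0)+G\bigl(2g_{n}(0,0)+2\partial_{y}u_{n}(0,0)+\partial_{xx}^{2}u_{n}(0,0)\bigr)\Delta t+\mathcal{O}(\Delta t)^{3/2}$ before subtracting as in \eqref{5.16}; you instead reduce to a fixed $\sigma$ by the contraction inequality for suprema and expand the classical expectation directly, exploiting that $\langle B\rangle_{t}=\sigma^{2}t$ is deterministic under $P^{\sigma}$ and using Gaussian moment bounds ($\mathbb{E}^{\sigma}[B_{\Delta t}]=\mathbb{E}^{\sigma}[B_{\Delta t}^{3}]=0$, $\mathbb{E}^{\sigma}[|B_{\Delta t}|^{3}]\leq C(\Delta t)^{3/2}$). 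Your route buys a proof free of stochastic calculus on the continuous side and, usefully, makes the $R_{z}^{T,n,\sigma}$ estimate explicit (the paper only says ``similarly''): applying the same expansion to $u_n(x,y)\,x$ and observing that the odd moments vanish on both sides ($\mathbb{E}^{\sigma}[B_{\Delta t}^{2k+1}]=0$ and $\sum_i\omega_i^{\sigma}q_i=\sum_i\omega_i^{\sigma}q_i^{3}=0$ by the symmetry $q_1=-q_3$, $\omega_1^{\sigma}=\omega_3^{\sigma}$) is precisely what is needed. The paper's route, by contrast, recycles the work already done for $\hat{R}_{y_3}^{n}$ in Lemma \ref{lemma4} and stays entirely within the $G$-expectation formalism. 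Your bookkeeping of remainders under the limited regularity (second-order Taylor of $u_n$ with Lipschitz second derivatives, Lipschitz-only expansion of $f_n,g_n$ absorbed by the $\Delta t$ and $y$ factors) matches the paper's treatment of $\mathcal{E}_n$ and $\tilde{\mathcal{E}}_n$, so there is no gap.
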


\begin{proof}
From (\ref{7.0}), it is easy to know
\begin{equation}
R_{y}^{T,n}=\mathbb{\tilde{E}}_{t_{n}}^{X^{n}}\left[  \varphi_{n}\left(
B_{\Delta t},\langle B\rangle_{\Delta t}\right)  \right]  -\mathbb{\tilde{E}%
}_{t_{n}}^{TR,X^{n}}\left[  \varphi_{n}\left(  B_{\Delta t},\langle
B\rangle_{\Delta t}\right)  \right]  . \label{5.16}%
\end{equation}
Under the assumptions (A1)-(A2), applying the Taylor expansion to $u_{n}$ and
using the Lipschitz condition on $f_{n}$ and $g_{n}$, we get
\begin{align}
\varphi_{n}(\sqrt{\Delta t}q_{i},\Delta tq_{i}^{2})  &  =u_{n}\left(
0,0\right)  +\partial_{x}u_{n}\left(  0,0\right)  \sqrt{\Delta t}%
q_{i}+\partial_{y}u_{n}\left(  0,0\right)  \Delta tq_{i}^{2}\\
&  \text{ \  \ }+\frac{1}{2}\partial_{xx}^{2}u_{n}\left(  0,0\right)
(\sqrt{\Delta t}q_{i})^{2}+f_{n}\left(  0,0\right)  \Delta t\nonumber \\
&  \text{ \  \ }+g_{n}\left(  0,0\right)  \Delta tq_{i}^{2}+\mathcal{O}(\Delta
t)^{\frac{3}{2}}.\nonumber
\end{align}
This and Remark \ref{remark2} yield
\begin{align}
&  \mathbb{\tilde{E}}_{t_{n}}^{TR,X^{n}}\left[  \varphi_{n}\left(  B_{\Delta
t},\langle B\rangle_{\Delta t}\right)  \right] \label{5.14}\\
&  =\varphi_{n}\left(  0,0\right)  +\sup \limits_{\sigma \in \left \{
\underline{\sigma},\overline{\sigma}\right \}  }\left[  \{g_{n}\left(
0,0\right)  +\partial_{y}u_{n}\left(  0,0\right)  +\frac{1}{2}\partial
_{xx}^{2}u_{n}\left(  0,0\right)  \} \sigma^{2}\Delta t\right]  +\mathcal{O}%
(\Delta t)^{\frac{3}{2}}\nonumber \\
&  =\varphi_{n}\left(  0,0\right)  +G\left(  2g_{n}\left(  0,0\right)
+2\partial_{y}u_{n}\left(  0,0\right)  +\partial_{xx}^{2}u_{n}\left(
0,0\right)  \right)  \Delta t+\mathcal{O}(\Delta t)^{\frac{3}{2}}.\nonumber
\end{align}
Together with (\ref{5.13})-(\ref{5.16}) and (\ref{5.14}), we obtain
$|R_{y}^{T,n}|\leq C(\Delta t)^{\frac{3}{2}}$. Similarly, according to
(\ref{E^wan}) and (\ref{E TR}), the estimate for $R_{z}^{T,n,\sigma}$ follows.
\end{proof}

\begin{proof}
[Proof of Theorem \ref{theorem 2}]From Lemmas \ref{lemma2}-\ref{lemma5}, it
holds that%
\[%
\begin{array}
[c]{rr}%
|\tilde{R}_{y}^{n}|^{2}\leq C(\Delta t)^{3}, & |R_{y}^{T,n}|^{2}\leq C(\Delta
t)^{3};\\
|\tilde{R}_{z}^{n}|^{2}\leq C(\Delta t)^{3}, & |R_{z}^{T,n}|^{2}\leq C(\Delta
t)^{3}.
\end{array}
\]
Then by Theorem \ref{theorem 1} and Lemma \ref{lemma1}, the conclusion can be obtained.
\end{proof}

\begin{remark}
Suppose that $f,g\in C_{b}^{1+1/2,2+1,2+1,2+1}([0,T]\times \mathbb{R}^{m}%
\times \mathbb{R}\times \mathbb{R})$ and $\phi \in C_{b}^{2+1}(\mathbb{R}^{m})$.
Without \emph{(A2)}, we conclude that
\[
\mathbb{\tilde{E}}^{TR}\left[  |Y_{t_{n}}^{t_{n},X^{n}}-Y^{n}|^{2}\right]
\leq C(\Delta t)^{\alpha}, \;\;\text{for some }\alpha \in(0,1),
\]
which yields Scheme \ref{trinomial tree scheme} is of order $\alpha/2$ for $Y$.
\end{remark}

\section{Multi-dimensional $G$-Brownian motion case\label{multi case}}

In this section, we extend our results to the multi-dimensional $G$-Brownian
motion case. Let $G:$ $\mathbb{S}(d)\rightarrow \mathbb{R}$ be a given
sublinear function such that%
\begin{equation}
G\left(  A\right)  =\frac{1}{2}\sup_{Q\in \Sigma}tr[QA], \label{G}
\end{equation}
with the closed set $\Sigma \subset$ $\mathbb{S}^{+}(d)$, and $B_{t}=(B_{t}%
^{1},\ldots,B_{t}^{d})^{\top}$ be the corresponding $d$-dimensional
$G$-Brownian motion. Denote $h(t,x):=(h_{ij}(t,x))_{i,j=1}^{d}$ and
$g(t,x,y,z):=(g_{ij}(t,x,y,z))_{i,j=1}^{d}$. For $\eta \in L_{G}^{1}(\Omega
_{T})$, define the approximate conditional $G$-expectation $\mathbb{\tilde{E}%
}_{t_{n}}^{X^{n}}[\eta]$ by
\begin{equation}
\mathbb{\tilde{E}}_{t_{n}}^{X^{n}}[\eta]=\sup \limits_{Q\in \Sigma}%
\mathbb{E}_{t_{n}}^{Q,X^{n}}\left[  \eta \right]  =\sup_{Q\in \Sigma}%
\mathbb{E}_{P^{Q}}\left[  \left.  \eta \right \vert X_{t_{n}}=X^{n}\right]  ,
\end{equation}
where $P^{Q}\in \mathcal{P}$ is a probability measure, under which
$(B_{t})_{t\geq0}$ is the classical Brownian motion with $\mathbb{E}_{P^{Q}%
}[B_{t}]=0$ and $\mathbb{E}_{P^{Q}}[B_{t}B_{t}^{\top}]=Qt$. Then the equations
(\ref{ref1})-(\ref{ref2}) can be extended to
\begin{align}
&  Y_{t_{n}}^{t_{n},X^{n}}=\mathbb{\tilde{E}}_{t_{n}}^{X^{n}}\left[
Y_{t_{n+1}}^{t_{n+1},X^{n+1}}+f_{t_{n+1}}^{t_{n+1},X^{n+1}}\Delta t+\langle
g_{t_{n+1}}^{t_{n+1},X^{n+1}},\Delta \langle B\rangle_{n+1}\rangle \right]
+\tilde{R}_{y}^{n},\label{7.1}\\
&  Z_{t_{n}}^{t_{n},X^{n}}Q\Delta t=\mathbb{E}_{t_{n}}^{Q,X^{n}}\left[
Y_{t_{n+1}}^{t_{n+1},X^{n+1}}\Delta B_{n+1}^{\top}\right]  +\tilde{R}%
_{z}^{n,Q},\text{ for}\ Q\in \Sigma, \label{7.2}%
\end{align}
where $Z_{t}=(Z_{t}^{1},\ldots,Z_{t}^{d})$, $\tilde{R}_{y}^{n}$ is given in
(\ref{2.7}) and $\tilde{R}_{z}^{n,Q}=R_{z_{1}}^{n,Q}+R_{z_{2}}^{n,Q}-R_{z_{3}%
}^{n,Q}$ with%
\begin{align}
R_{z_{1}}^{n,Q}  &  =\mathbb{E}_{t_{n}}^{Q,X^{n}}\left[
{\textstyle \int_{t_{n}}^{t_{n+1}}}
f_{t}^{t_{n},X^{n}}dt\Delta B_{n+1}^{\top}+%
{\textstyle \int \nolimits_{t_{n}}^{t_{n+1}}}
g_{t}^{t_{n},X^{n}}d\langle B\rangle_{t}\Delta B_{n+1}^{\top}\right] \\
&  \text{ \  \ }-\mathbb{E}_{t_{n}}^{Q,X^{n}}\left[  (K_{t_{n+1}}^{t_{n},X^{n}%
}-K_{t_{n}}^{t_{n},X^{n}})\Delta B_{n+1}^{\top}\right]  ,\nonumber \\
R_{z_{2}}^{n,Q}  &  =\mathbb{E}_{t_{n}}^{Q,X^{n}}\left[  Y_{t_{n+1}}%
^{t_{n},X^{n}}\Delta B_{n+1}^{\top}\right]  -\mathbb{E}_{t_{n}}^{Q,X^{n}%
}\left[  Y_{t_{n+1}}^{t_{n+1},X^{n+1}}\Delta B_{n+1}^{\top}\right]  ,\\
R_{z_{3}}^{n,Q}  &  =\mathbb{E}_{t_{n}}^{Q,X^{n}}\left[
{\textstyle \int_{t_{n}}^{t_{n+1}}}
Z_{t}^{t_{n},X^{n}}Qdt\right]  -Z_{t_{n}}^{t_{n},X^{n}}Q\Delta t.
\end{align}

\begin{example}
\label{example1}Assume that $d=2$ and $Q=diag\left(  \sigma_{1}^{2},\sigma
_{2}^{2}\right)  $. In this case, we have
\[
\mathbb{E}_{t_{n}}^{Q,X^{n}}\left[  \varphi \left(  \Delta B_{n+1}^{1},\Delta
B_{n+1}^{2}\right)  \Delta B_{n+1}^{1}\right]  =\mathbb{E}_{t_{n}}^{\sigma
_{1},X^{n}}\left[  \mathbb{E}_{t_{n}}^{\sigma_{2},X^{n}}\left[  \varphi \left(
x,\Delta B_{n+1}^{2}\right)  x\right]  _{x=\Delta B_{n+1}^{1}}\right]
\text{.}%
\]
From the trinomial tree rule $\left(  \ref{E TR}\right)  $, we can deduce
that
\begin{align*}
\frac{\mathbb{E}_{t_{n}}^{TR,Q,X^{n}}\left[  \varphi \left(  \Delta
B_{n+1}\right)  \Delta B_{n+1}^{1}\right]  }{\sigma_{1}^{2}\Delta t}  &
=\frac{1}{\sigma_{1}^{2}\sqrt{\Delta t}}\sum_{i=1}^{3}\sum_{j=1}^{3}\omega
_{i}^{\sigma_{1}}\omega_{j}^{\sigma_{2}}\varphi(\sqrt{\Delta t}q_{i}%
,\sqrt{\Delta t}q_{j})q_{i}\\
&  =\frac{1}{2\sqrt{\Delta t}}\sum_{j=1}^{3}\omega_{j}^{\sigma_{2}}\left[
\varphi(\sqrt{\Delta t},\sqrt{\Delta t}q_{j})-\varphi(-\sqrt{\Delta t}%
,\sqrt{\Delta t}q_{j})\right]  ,
\end{align*}
which is independent of $\sigma_{1}$ but dependent on $\sigma_{2}$.
Similarly,
\[
\frac{\mathbb{E}_{t_{n}}^{TR,Q,X^{n}}\left[  \varphi \left(  \Delta
B_{n+1}\right)  \Delta B_{n+1}^{2}\right]  }{\sigma_{2}^{2}\Delta t}=\frac
{1}{2\sqrt{\Delta t}}\sum_{i=1}^{3}\omega_{i}^{\sigma_{1}}\left[
\varphi(\sqrt{\Delta t}q_{i},\sqrt{\Delta t})-\varphi(\sqrt{\Delta t}%
q_{i},-\sqrt{\Delta t})\right]  ,
\]
which is independent of $\sigma_{2}$ but dependent on $\sigma_{1}$.
\end{example}

\begin{remark}
Example \ref{example1} implies that the trinomial tree rule \eqref{zn_tr} for
$Z^{n}$ depends on $Q$ in the case of multi-dimensional $G$-Brownian motion,
and Lemma \ref{lemma1} is no longer true. In addition, in this case the
weights of the trinomial tree rule are determined by the matrix $Q\in \Sigma$,
which makes it more difficult to derive.
\end{remark}

\subsection{Gauss-Hermite quadrature rule}

Inspired by the approximation of classical $d$-dimensional Brownian motion
distribution, for a function $\varphi:\mathbb{R}^{m}\times \mathbb{R}^{d}%
\times \mathbb{R}^{d\times d}\rightarrow \mathbb{R}$, denote $\varphi_{t_{n+1}%
}=\varphi \left(  X^{n+1},\Delta B_{n+1},\Delta \langle B\rangle_{n+1}\right)
$, we introduce a more effective Gauss-Hermite quadrature rule to approximate
$\mathbb{\tilde{E}}_{t_{n}}^{GH,X^{n}}[\varphi_{t_{n+1}}]$ as follows:
\begin{align}
\mathbb{\tilde{E}}_{t_{n}}^{GH,X^{n}}[\varphi_{t_{n+1}}]  &  :=\sup
\limits_{Q\in \Sigma}\mathbb{E}_{t_{n}}^{GH,Q,X^{n}}\left[  \varphi \left(
X^{n+1},\Delta B_{n+1},\Delta \langle B\rangle_{n+1}\right)  \right] \\
&  :=\sup \limits_{Q\in \Sigma}\sum_{i_{1},\ldots,i_{d}=1}^{L}\omega_{i_{1}%
}\cdots \omega_{i_{d}}\tilde{\varphi}\left(  X^{n},\sqrt{\Delta t}P_{Q},\Delta
tQ\right)  ,\nonumber
\end{align}
where\ $\tilde{\varphi}\left(  x,y,z\right)  =\varphi \left(  x+b(t_{n}%
,x)\Delta t+\sigma(t_{n},x)y+\langle h(t_{n},x),z\rangle,y,z\right)  $, the
weights $\omega_{i}=\frac{2^{L+1}L!}{\left[  H_{L}^{\prime}\left(
x_{i}\right)  \right]  ^{2}}$ for $i=1,\ldots L$, $P_{Q}\allowbreak=\sqrt
{Q}(\sqrt{2}p_{i_{1}},\ldots,\sqrt{2}p_{i_{d}})^{\top}$, and the nodes
$\{p_{i}\}_{i=1}^{L}\ $are the roots of the $L$ degree Hermite polynomial
$H_{L}\left(  x\right)  =(-1)^{L}e^{x^{2}}\frac{d^{L}}{dx^{L}}e^{-x^{2}}$.
Similarly, we define the associated discrete sublinear expectation
\begin{equation}
\mathbb{\tilde{E}}^{GH}\left[  \varphi_{t_{n+1}}\right]  =\mathbb{\tilde{E}%
}_{t_{0}}^{GH,x_{0}}\left[  \mathbb{\tilde{E}}_{t_{1}}^{GH,X^{1}}\left[
\cdots \mathbb{\tilde{E}}_{t_{n}}^{GH,X^{n}}[\varphi_{t_{n+1}}]\right]
\right]  .
\end{equation}

\begin{remark}
\label{remark1}For any fixed $L\geq2$, noting that the Gauss-Hermite
quadrature rule is exact for any polynomial of degree $\leq2L-1$, one can
check that
\[
\sum \limits_{i=1}^{L}\omega_{i}=1,\text{ \ }\sum \limits_{i=1}^{L}\omega
_{i}p_{i}=0,\text{ \ }\sum \limits_{i=1}^{L}\omega_{i}p_{i}^{2}=\frac{1}{2}.
\]

\end{remark}

Similar to the one-dimensional case, based on (\ref{7.1}) and (\ref{7.2}),
we get the following reference equations:
\begin{align}
&  Y_{t_{n}}^{t_{n},X^{n}}=\mathbb{\tilde{E}}_{t_{n}}^{GH,X^{n}}\left[
Y_{t_{n+1}}^{t_{n+1},X^{n+1}}+f_{t_{n+1}}^{t_{n+1},X^{n+1}}\Delta t+\langle
g_{t_{n+1}}^{t_{n+1},X^{n+1}},\Delta \langle B\rangle_{n+1}\rangle \right]
+\tilde{R}_{y}^{n}+R_{y}^{G,n},\label{Y}\\
&  Z_{t_{n}}^{t_{n},X^{n}}Q\Delta t=\mathbb{E}_{t_{n}}^{GH,Q,X^{n}}\left[
Y_{t_{n+1}}^{t_{n+1},X^{n+1}}\Delta B_{n+1}^{\top}\right]  +\tilde{R}%
_{z}^{n,Q}+R_{z}^{G,n,Q},\text{ for }Q\in \Sigma, \label{Z}%
\end{align}
where
\begin{align*}
&  R_{y}^{G,n}=\mathbb{\tilde{E}}_{t_{n}}^{X^{n}}\left[  Y_{t_{n+1}}%
^{t_{n+1},X^{n+1}}+f_{t_{n+1}}^{t_{n+1},X^{n+1}}\Delta t+\langle g_{t_{n+1}%
}^{t_{n+1},X^{n+1}},\Delta \langle B\rangle_{n+1}\rangle \right]\\
&  \text{ \  \  \  \  \  \  \  \ }-\mathbb{\tilde{E}}_{t_{n}}^{GH,X^{n}}\left[
Y_{t_{n+1}}^{t_{n+1},X^{n+1}}+f_{t_{n+1}}^{t_{n+1},X^{n+1}}\Delta t+\langle
g_{t_{n+1}}^{t_{n+1},X^{n+1}},\Delta \langle B\rangle_{n+1}\rangle \right]
, \\
&  R_{z}^{G,n,Q}=\mathbb{E}_{t_{n}}^{Q,X^{n}}\left[  Y_{t_{n+1}}%
^{t_{n+1},X^{n+1}}\Delta B_{n+1}^{\top}\right]  -\mathbb{E}_{t_{n}%
}^{GH,Q,X^{n}}\left[  Y_{t_{n+1}}^{t_{n+1},X^{n+1}}\Delta B_{n+1}^{\top
}\right]  .
\end{align*}

\subsection{Numerical schemes and convergence results}

Let $Y^{n}$ and $Z^{n}$ be the approximate values of the solutions $Y_{t}$ and
$Z_{t}$ of the $G$-FBSDE (\ref{1.0}) at time $t_{n}$, respectively, and denote
$f^{n+1}=f(t_{n+1},X^{n+1},Y^{n+1},Z^{n+1})$, $g^{n+1}=g(t_{n+1},X^{n+1}$,
$Y^{n+1},Z^{n+1})$. By removing the error terms $\tilde{R}_{y} ^{n}%
,R_{y}^{G,n},\tilde{R}_{z}^{n,Q}$, and $R_{z}^{G,n,Q}$ from (\ref{Y}%
)-(\ref{Z}), we obtain our discrete scheme for solving the $G$-FBSDE
(\ref{1.0}) as follows.

\begin{sch}
\label{GH scheme}Given random variables $Y^{N}$ and $Z^{N}$, for
$n=N-1,\ldots,0$ and $Q$ $\in \Sigma$, solve random variables $Y^{n}%
=Y^{n}(X^{n})$ and $Z^{n}=Z^{n}(X^{n})$ from%
\begin{align}
Y^{n}  &  =\mathbb{\tilde{E}}_{t_{n}}^{GH,X^{n}}\left[  Y^{n+1}+f^{n+1}\Delta
t+\langle g^{n+1},\Delta \langle B\rangle_{n+1}\rangle \right]  ,\label{yn_GH}\\
Z^{n}  &  =\mathbb{E}_{t_{n}}^{GH,Q,X^{n}}\left[  Y^{n+1}\Delta B_{n+1}^{\top
}\right]  Q^{-1}/\Delta t, \label{zn_GH}%
\end{align}
with%
\begin{equation}
X^{n+1}=X^{n}+b(t_{n},X^{n})\Delta t+\langle h(t_{n},X^{n}),\Delta \langle
B\rangle_{n+1}\rangle+\sigma(t_{n},X^{n})\Delta B_{n+1}.
\end{equation}
\
\end{sch}

Compared with the trinomial tree scheme in the one-dimensional $G$-Brownian motion case,
(\ref{zn_GH}) in Scheme \ref{GH scheme} for $Z^{n}$ depends on the parameter
$Q$ in the multi-dimensional case. To obtain the convergence results for
Scheme \ref{GH scheme}, the following assumption need to be imposed.

\begin{enumerate}
\item[(A3)] There exists a constant $C>0$, such that, for any $\tilde{Q}%
\in \Sigma$,
\begin{equation}
\sup \limits_{0\leq n\leq N-1}|Z^{n}-\tilde{Z}^{n}|\leq C\left(  \Delta
t\right)  ^{\frac{1}{2}}, \label{A3}%
\end{equation}
where
\[
\tilde{Z}^{n}=\mathbb{E}_{t_{n}}^{GH,\tilde{Q},X^{n}}\left[  Y^{n+1}\Delta
B_{n+1}\right]  \tilde{Q}^{-1}/\Delta t.
\]

\end{enumerate}

Let us give a sufficient condition for the assumption (A3) to illustrate its rationality.

\begin{proposition}
For each fixed $(t_{n},X^{n})$, assume that there exists a function $\psi
_{n}\in C^{1}(\mathbb{R}^{d}\times \mathbb{R}^{d\times d};\mathbb{R)}\ $such
that $Y^{n+1}=Y^{n+1}(X^{n+1}) =\psi_{n}(\Delta B_{n+1},\Delta \langle
B\rangle_{n+1})$, and for any $x,x^{\prime}\in \mathbb{R}^{d}$, $y,y^{\prime
}\in \mathbb{R}^{d\times d}$,
\begin{equation}%
\begin{array}
[c]{r}%
\sup \limits_{0\leq n\leq N-1}\left \vert D_{x}\psi_{n}\left(  x,y\right)
-D_{x}\psi_{n}\left(  x^{\prime},y^{\prime}\right)  \right \vert \leq C\left(
\left \vert x-x^{\prime}\right \vert +\left \vert y-y^{\prime}\right \vert
\right)  ,\\
\sup \limits_{0\leq n\leq N-1}\left \vert D_{y}\psi_{n}\left(  x,y\right)
-D_{y}\psi_{n}\left(  x^{\prime},y^{\prime}\right)  \right \vert \leq C\left(
\left \vert x-x^{\prime}\right \vert +\left \vert y-y^{\prime}\right \vert
\right)  .
\end{array}
\label{4.9.0}%
\end{equation}
Then, the assumption \emph{(A3)} holds.
\end{proposition}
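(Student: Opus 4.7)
The plan is to express both $Z^n$ and $\tilde Z^n$ in terms of $\psi_n$, perform a first-order Taylor expansion in the first argument, and read off the leading term. Under the Gauss-Hermite rule the random pair $(\Delta B_{n+1},\Delta\langle B\rangle_{n+1})$ is replaced by the deterministic point $(\sqrt{\Delta t}\,P_Q,\Delta tQ)$, so the hypothesis $Y^{n+1}=\psi_n(\Delta B_{n+1},\Delta\langle B\rangle_{n+1})$ combined with \eqref{zn_GH} gives
\[
Z^n \;=\; \frac{1}{\Delta t}\sum_{i_1,\ldots,i_d=1}^{L}\omega_{i_1}\cdots\omega_{i_d}\,\psi_n(\sqrt{\Delta t}\,P_Q,\Delta tQ)\,(\sqrt{\Delta t}\,P_Q)^{\top}\,Q^{-1},
\]
and the analogous formula for $\tilde Z^n$ with $\tilde Q$ in place of $Q$.

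Next, I would Taylor-expand $\psi_n$ in its first slot about $0$, keeping $y=\Delta tQ$ fixed,
\[
\psi_n(\sqrt{\Delta t}\,P_Q,\Delta tQ)\;=\;\psi_n(0,\Delta tQ)+D_x\psi_n(0,\Delta tQ)\sqrt{\Delta t}\,P_Q+R_{n,Q},
\]
where the hypothesis that $D_x\psi_n$ is Lipschitz in $x$, together with the boundedness of the Hermite nodes $p_i$ and of $\sqrt{Q}$ on the closed set $\Sigma$, gives $|R_{n,Q}|\le C\Delta t|P_Q|^2\le C\Delta t$. The key combinatorial step is to evaluate the quadrature moments. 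Using Remark~\ref{remark1} together with a product expansion over the multi-index $(i_1,\ldots,i_d)$ one obtains
\[
\sum_{i_1,\ldots,i_d}\omega_{i_1}\cdots\omega_{i_d}P_Q=0,\qquad \sum_{i_1,\ldots,i_d}\omega_{i_1}\cdots\omega_{i_d}P_Q P_Q^{\top}=\sqrt{Q}\,I\,\sqrt{Q}=Q.
\]
Plugging the expansion back: the constant term contributes nothing by the first identity, the linear term gives $\Delta t\,D_x\psi_n(0,\Delta tQ)\,Q\,Q^{-1}=\Delta t\,D_x\psi_n(0,\Delta tQ)$ by the second identity, and the remainder is of order $(\Delta t)^{3/2}\,|Q^{-1}|$. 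Since $G$ is non-degenerate, $\Sigma$ consists of positive-definite matrices with uniformly bounded inverse, so after dividing by $\Delta t$ one gets
\[
Z^n=D_x\psi_n(0,\Delta tQ)+O\bigl((\Delta t)^{1/2}\bigr),\qquad \tilde Z^n=D_x\psi_n(0,\Delta t\tilde Q)+O\bigl((\Delta t)^{1/2}\bigr),
\]
with the $O$-constants independent of $n$ thanks to the $n$-uniform Lipschitz bound in the hypothesis.

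Finally, the Lipschitz hypothesis on $D_x\psi_n$ in the $y$-variable bounds $|D_x\psi_n(0,\Delta tQ)-D_x\psi_n(0,\Delta t\tilde Q)|\le C\Delta t|Q-\tilde Q|=O(\Delta t)$, and the three estimates combine to give $\sup_n|Z^n-\tilde Z^n|\le C(\Delta t)^{1/2}$, which is assumption (A3). The delicate step I expect is the verification of the second-moment identity $\sum\omega_{\vec i}P_Q P_Q^{\top}=Q$, because $P_Q$ carries the matrix factor $\sqrt{Q}$ and one must show that the off-diagonal cross-terms $\sum\omega_{\vec i}p_{i_a}p_{i_b}$ with $a\ne b$ vanish while the diagonal ones equal $1/2$; once this tensor-product moment computation is cleanly in hand, everything else reduces to Taylor bookkeeping. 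The Lipschitz hypothesis on $D_y\psi_n$ does not enter this outline explicitly but is a natural regularity condition to keep alongside the one on $D_x\psi_n$.
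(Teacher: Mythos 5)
Your proposal is correct and follows essentially the same route as the paper's proof: represent $Z^{n}$ and $\tilde{Z}^{n}$ by the Gauss--Hermite sums, Taylor-expand $\psi_{n}$ using the Lipschitz bounds \eqref{4.9.0}, and invoke the moment identities of Remark \ref{remark1} to identify the leading term and an $\mathcal{O}((\Delta t)^{1/2})$ remainder. The only cosmetic difference is that you expand about $(0,\Delta t Q)$ and then compare $D_{x}\psi_{n}(0,\Delta tQ)$ with $D_{x}\psi_{n}(0,\Delta t\tilde{Q})$ via the Lipschitz-in-$y$ bound, whereas the paper expands about $(0,0)$ so that both quantities share the common leading term $D_{x}\psi_{n}(0,0)$.
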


\begin{proof}
Notice that
\begin{equation}
\mathbb{E}_{t_{n}}^{GH,Q,X^{n}}\left[  Y^{n+1}\Delta B_{n+1}^{\top}\right]
=\sum_{i_{1},\ldots,i_{d}=1}^{L}\omega_{i_{1}}\cdots \omega_{i_{d}}\psi
_{n}(\sqrt{\Delta t}P_{Q},\Delta tQ)\sqrt{\Delta t}P_{Q}^{\top}. \label{4.9.1}%
\end{equation}
Using the Taylor expansion and (\ref{4.9.0}), for $\theta \in(0,1)$, we have
\begin{align}
\psi_{n}(\sqrt{\Delta t}P_{Q},\Delta tQ)  &  =\psi_{n}(0,0)+D_{x}\psi
_{n}(0,0)P_{Q}\sqrt{\Delta t}+\langle D_{y}\psi_{n}(0,0),Q\rangle \Delta
t\text{ \  \  \ }\label{4.9.2}\\
&  \text{ \  \  \ }+\int_{0}^{1}[D_{x}\psi_{n}(\theta \sqrt{\Delta t}P_{Q}%
,\theta \Delta tQ)-D_{x}\psi_{n}(0,0)]d\theta P_{Q}\sqrt{\Delta t}\nonumber \\
&  \text{ \  \  \ }+\int_{0}^{1}\langle D_{y}\psi_{n}(\theta \sqrt{\Delta t}%
P_{Q},\theta \Delta tQ)-D_{y}\psi_{n}(0,0),Q\rangle d\theta \Delta t\nonumber \\
&  =\psi_{n}(0,0)+D_{x}\psi_{n}(0,0)P_{Q}\sqrt{\Delta t}+\mathcal{O}(\Delta
t).\nonumber
\end{align}
From (\ref{4.9.1})-(\ref{4.9.2}), one can check that%
\begin{equation}
Z^{n}=D_{x}\psi_{n}(0,0)+\mathcal{O}(\Delta t)^{1/2}.
\end{equation}
Analogously,
\begin{equation}
\tilde{Z}^{n}=D_{x}\psi_{n}(0,0)+\mathcal{O}(\Delta t)^{1/2}.
\end{equation}
The desired result follows.
\end{proof}

Similar to the analysis in Theorems \ref{theorem 1} and \ref{theorem 2}, under
the assumptions (A1)-(A3), we derive the following convergence theorem.

\begin{theorem}
\label{theorem 3}Suppose \emph{(A1)}-\emph{(A3)} hold. Let $(X_{t}
^{t_{n},X^{n}},Y_{t}^{t_{n},X^{n}},Z_{t}^{t_{n},X^{n}})_{t_{n}\leq t\leq T}$
be the solution of \eqref{G-SDE}-\eqref{G-BSDE}, and $(X^{n},Y^{n}%
,Z^{n})(n=0,1,\ldots,N)$ be the solution to Scheme \ref{GH scheme} with
$Y^{N}=u(t_{N},X^{N})$ and $Z^{N} =D_{x}u(t_{N},X^{N}) \sigma(t_{N},X^{N})$.
Then, for sufficiently small time step $\Delta t$,
\[
\mathbb{\tilde{E}}^{GH}\left[  |Y_{t_{n}}^{t_{n},X^{n}}-Y^{n}|^{2}+\Delta
t|Z_{t_{n}}^{t_{n},X^{n}}-Z^{n}|^{2}\right]  \leq C\Delta t.
\]

\end{theorem}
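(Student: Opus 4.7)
The plan is to mirror the proof of Theorem \ref{theorem 2}, replacing the two one-dimensional ingredients (the stability estimate of Theorem \ref{theorem 1} and the truncation-error Lemmas \ref{lemma2}--\ref{lemma5}) by their multi-dimensional Gauss-Hermite counterparts, and using assumption (A3) to absorb the $|\tilde Z^n - Z^n|$ terms that previously vanished thanks to Lemma \ref{lemma1}.

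First I would establish a multi-dimensional stability theorem analogous to Theorem \ref{theorem 1}, using the reference equations \eqref{Y}--\eqref{Z}. Picking a matrix $\tilde Q^n \in \Sigma$ at which $\mathbb{\tilde E}_{t_n}^{GH,X^n}$ attains its supremum and splitting into the cases $Y^n > Y_{t_n}^{t_n,X^n}$ and $Y_{t_n}^{t_n,X^n} \geq Y^n$, the same Young-type inequality $(a+b)^2 \leq (1+\gamma \Delta t)a^2 + (1 + 1/(\gamma\Delta t))b^2$ combined with Lipschitz continuity of $f,g$ yields the $Y$-part of the bound. The $Z$-part follows from a discrete conditional variance inequality
\[
\bigl|\mathbb{E}_{t_n}^{GH,Q,X^n}[\varphi_{t_{n+1}} \Delta B_{n+1}^\top]\bigr|^2
\leq C\bigl(\mathbb{E}_{t_n}^{GH,Q,X^n}[|\varphi_{t_{n+1}}|^2] - |\mathbb{E}_{t_n}^{GH,Q,X^n}[\varphi_{t_{n+1}}]|^2\bigr)\Delta t,
\]
obtained by Cauchy--Schwarz against the tensor-product Gauss-Hermite weights and the non-degeneracy $Q \geq \underline{\sigma}^2 I$. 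As in the proof of Theorem \ref{theorem 1}, this produces a recursion whose inhomogeneous part contains the truncation errors together with an extra term $C\Delta t\,\mathbb{\tilde E}^{GH}[|\tilde Z^n - Z^n|^2]$ which can no longer be eliminated.

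Second I would bound every truncation error by $C(\Delta t)^{3/2}$. The estimate $|\tilde R_y^n| \leq C(\Delta t)^{3/2}$ follows exactly as in Lemmas \ref{lemma2} and \ref{lemma4}, because the component-by-component use of the $G$-It\^o formula, the moment bound \eqref{5.0}, the Feynman-Kac formula in Theorem \ref{Nonlinear Feynman-Kac}, and assumption (A2) extend verbatim to the multi-dimensional setting. The same applies to $\tilde R_z^{n,Q}$ via the multi-dimensional analog of Lemma \ref{lemma3}. For the new quadrature errors $R_y^{G,n}$ and $R_z^{G,n,Q}$, I would Taylor expand $\varphi_n$ of \eqref{7.0} up to order $2L-1$ in the $\Delta B_{n+1}$-variable around $0$, and use Remark \ref{remark1} — exactness of Gauss-Hermite on polynomials of degree $\leq 2L - 1$ — so that all leading-order moments $\sum_i \omega_i = 1$, $\sum_i \omega_i p_i = 0$, $\sum_i \omega_i p_i^2 = 1/2$ reproduce exactly the moments of $B_{\Delta t}$ under $\mathbb{E}_{t_n}^{Q,X^n}$. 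Only the Taylor remainder, of order $(\Delta t)^{3/2}$, survives, exactly as in the proof of Lemma \ref{lemma5}.

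Third I would close the argument. With the four truncation errors squared and summed over $N = T/\Delta t$ steps contributing a total of $O(\Delta t)$, and with assumption (A3) giving $\Delta t \sum_{i=n}^{N-1} \mathbb{\tilde E}^{GH}[|\tilde Z^i - Z^i|^2] \leq CT\Delta t$, the telescoped Gr\"onwall-type recursion yields the stated bound; the terminal contribution vanishes because $Y^N = u(t_N,X^N)$ and $Z^N = D_x u(t_N,X^N)\sigma(t_N,X^N)$ coincide with $Y_{t_N}^{t_N,X^N}$ and $Z_{t_N}^{t_N,X^N}$ under (A2) and Remark \ref{F-K represent}. The principal obstacle I expect is the stability step: establishing the Gauss-Hermite analog of Proposition \ref{PropE}(c) uniformly in $Q \in \Sigma$ requires care with the tensor-product structure of the weights and with the matrix-valued normalization $Q^{-1}$ appearing in \eqref{zn_GH}, whereas the trinomial-tree case of Theorem \ref{theorem 1} only used a scalar $\sigma^{-2}$.
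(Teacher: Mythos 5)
Your proposal is correct and follows essentially the same route as the paper, which itself gives no detailed argument for Theorem \ref{theorem 3} beyond asserting it is ``similar to the analysis in Theorems \ref{theorem 1} and \ref{theorem 2}'': you correctly identify the three required adaptations, namely a multi-dimensional stability estimate with a Gauss--Hermite analog of Proposition \ref{PropE}(c) (using $Q\geq\underline{\sigma}^{2}I$ from non-degeneracy), the quadrature truncation errors handled via Remark \ref{remark1}, and assumption (A3) replacing Lemma \ref{lemma1} to absorb the $|\tilde{Z}^{n}-Z^{n}|$ term. The only cosmetic quibble is that a second-order Taylor expansion suffices (and is all that (A2) permits), since exactness of the quadrature for moments up to degree two is all that is needed for the $O((\Delta t)^{3/2})$ local error.
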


\begin{remark}
In the case when $f=g=0$, without the assumption $(A3)$, we have
\[
\mathbb{\tilde{E}}^{GH}\left[  |Y_{t_{n}}^{t_{n},X^{n}}-Y^{n}|^{2}\right]
\leq C\Delta t,
\]
which indicates that Scheme \ref{GH scheme} is of order $1/2$ for solving the
$G$-heat equation introduced by Peng \cite{P2010}
\begin{equation}
\left \{
\begin{array}
[c]{l}%
\partial_{t}u+G\left(  D_{xx}^{2}u\right)  =0,\text{\  \ }\forall \left(  t,x\right)
\in(0,T]\times \mathbb{R}^{m},\\
u\left(  T,x\right)  =\phi \left(  x\right)  .
\end{array}
\right.  \label{G-heat}%
\end{equation}

\end{remark}

\section{Numerical experiments}

In this section, some numerical experiments will be carried out to illustrate
the high accuracy of our numerical schemes. We take a uniform partition with
the time step $\Delta t=\frac{T}{N}$ and introduce the uniform space
partition $\mathcal{D}_{h}=\mathcal{D}_{1,h}\times \mathcal{D}_{2,h}%
\times \cdots \times \mathcal{D}_{m,h}$, where $\mathcal{D}_{j,h}$ is the
partition of the one-dimensional real axis $\mathbb{R}$
\[
\mathcal{D}_{j,h}=\left \{  \left.  x_{k}^{j}\right \vert x_{k}^{j}=k\Delta
x,\text{ }k=0,\pm1,\pm2,\ldots \right \}  ,
\]
for $j=1,2,\ldots,m$, and $\Delta x$ is a suitable spatial step. Let
$|Y_{0}-Y^{0}|$ and $|Z_{0}-Z^{0}|$ represent the errors between the exact and
numerical solutions for $Y$ and $Z$ at $(t_{0},x_{0})$, and denote
$\varphi(x)=x\wedge1\vee(-1)$. In our tables, we also denote by CR the
convergence rate, TR the discrete Scheme \ref{trinomial tree scheme}, and GH
the discrete Scheme \ref{GH scheme}, respectively. When it comes to Scheme
\ref{GH scheme}, in order to avoid confusion, we assume that $\mathbb{\tilde
{E}}_{t_{n}}^{GH,X^{n}}[\cdot]$ reaches its maximum at $\mathbb{E}_{t_{n}%
}^{GH,Q_{y}^{n},X^{n}}[\cdot]$ with the parameter $Q_{y}^{n}$, and use the
notation $Q_{z}^{n}$ to further distinguish the selected parameter for solving
$Z$.

\subsection{$G$-heat equation}

We first apply our schemes to the $G$-heat equation \eqref{G-heat} which is
related to the\ $G$-FBSDE \eqref{G-SDE}-\eqref{G-BSDE} with $f=g=0$.

\begin{example}
\label{EX4}Consider the $G$-heat equation \eqref{G-heat} with the terminal
$\varphi(x)=(x+c_{1})^{3}$. We set $T=1$, $x_{0}=0$, $\underline{\sigma}=0.2$,
$\overline{\sigma}=1$, and $c_{1}=-0.5840$. The solution of this
$G$-heat equation given in \cite{H2012} is $u(t,x)=(1-t)^{\frac{3}{2}}P\left(
\frac{x+c_{1}}{\sqrt{1-t}}\right)$, where
\[
P\left(  x\right)  =\left \{
\begin{array}
[c]{ll}%
\displaystyle3x+x^{3}+\frac{k_{1}}{2}\left[  \left(  2+x^{2}\right)
e^{-\frac{x^{2}}{2}}-\left(  3x+x^{3}\right)  \int_{x}^{\infty}e^{-\frac
{r^{2}}{2}}dr\right]  , & x\geq \overline{c}_{1},\\
\displaystyle3\underline{\sigma}^{2}x+x^{3}+\frac{d_{1}}{2\underline{\sigma
}^{2}}\left[  \left(  2\underline{\sigma}^{2}+x^{2}\right)  e^{-\frac{x^{2}
}{2\underline{\sigma}^{2}}}+\frac{1}{\underline{\sigma}}\left(  3\underline
{\sigma}^{2}x+x^{3}\right)  \int_{-\infty}^{\frac{x}{\underline{\sigma}}%
}e^{-\frac{r^{2}}{2}}dr\right]  , & x<\overline{c}_{1},
\end{array}
\right.
\]
with $\overline{c}_{1}=-0.5840$, $k_{1}=0.6154$ and $d_{1}=36.8406$. The exact solution at $(t_0,x_0)$ is $(u(0,0),\partial
_{x} u(0,0))=(Y_{0} ,Z_{0})=(-0.2595,1.3331)$. We solve this example by Scheme
\ref{trinomial tree scheme} and Scheme \ref{GH scheme} with $Q_{z}^{n}
=Q_{y}^{n}$. The numerical errors and the corresponding convergence rates are
listed in Table \ref{Table 4.1}, which are consistent with our theoretical
results. \begin{table}[tbh]
\caption{Errors and convergence rates for Example \ref{EX4} with
$c_{1}=-0.584$.}%
\label{Table 4.1}%
{\footnotesize
\[%
\begin{tabular}
[c]{||c|c|c|c|c|c|c||}\hline
\multicolumn{7}{||c||}{$\left \vert Y_{0}-Y^{0}\right \vert $}\\ \hline
Scheme & $N=16$ & $N=32$ & $N=64$ & $N=128$ & $N=256$ & CR\\ \hline
TR & 9.347E-04 & 6.110E-04 & 4.286E-04 & 2.886E-04 & 1.764E-04 & 0.589\\ \hline
GH & 7.005E-03 & 4.966E-03 & 3.259E-03 & 2.002E-03 & 1.158E-03 & 0.650\\ \hline
\multicolumn{7}{||c||}{$\left \vert Z_{0}-Z^{0}\right \vert $}\\ \hline
Scheme & $N=16$ & $N=32$ & $N=64$ & $N=128$ & $N=256$ & CR\\ \hline
TR & 1.216E-01 & 7.390E-02 & 4.183E-02 & 2.203E-02 & 1.088E-02 & 0.871\\ \hline
GH & 4.918E-02 & 3.377E-02 & 2.320E-02 & 1.520E-02 & 9.348E-03 & 0.594\\ \hline
\end{tabular}
\]
}\end{table}

Next, we choose a different $c_{1}=0$. The unique solution of the
corresponding $G$-heat equation is $u(t,x)=(1-t)^{\frac{3}{2}}P\left(
\frac{x}{\sqrt{1-t}}\right)$, and
the exact solution is $(u(0,0),\partial_{x} u(0,0))=(Y_{0},Z_{0}%
)=(0.6154,1.8430)$. We solve this example by Scheme
\ref{trinomial tree scheme} and Scheme \ref{GH scheme} with $Q_{z}^{n}
=Q_{y}^{n}$ again, and the errors and the corresponding convergence rates
are given in Table \ref{Table 4.2}. \begin{table}[tbh]
\caption{Errors and convergence rates for Example \ref{EX4} with $c_{1}=0$.}%
\label{Table 4.2}%
{\footnotesize
\[%
\begin{tabular}
[c]{||c|c|c|c|c|c|c||}\hline
\multicolumn{7}{||c||}{$\left \vert Y_{0}-Y^{0}\right \vert $}\\ \hline
Scheme & $N=16$ & $N=32$ & $N=64$ & $N=128$ & $N=256$ & CR\\ \hline
TR & 9.097E-03 & 4.294E-03 & 2.564E-03 & 1.123E-03 & 4.811E-04 & 1.042\\ \hline
GH & 1.455E-02 & 7.413E-03 & 3.965E-03 & 1.900E-03 & 9.534E-04 & 0.983\\ \hline
\multicolumn{7}{||c||}{$\left \vert Z_{0}-Z^{0}\right \vert $}\\ \hline
Scheme & $N=16$ & $N=32$ & $N=64$ & $N=128$ & $N=256$ & CR\\ \hline
TR & 7.694E-02 & 3.975E-02 & 1.843E-02 & 9.404E-03 & 5.028E-03 & 0.995\\ \hline
GH & 1.328E-02 & 6.853E-03 & 5.800E-03 & 2.434E-03 & 1.072E-03 & 0.876\\ \hline
\end{tabular}
\]
}\end{table}

By contrast, we observed that when $c_{1}=0$, the convergence rates of $\vert
Y_{0}-Y^{0} \vert$ and $\vert Z_{0}-Z^{0}\vert$ are higher than that when
$c_{1}=-0.5840$. This is due to the fact that $x_{0}=0$ is the inflection
point of the solution $u(0,x)$ in the case when $c_{1}=-0.5840$, but $x_{0}=0$
is not the inflection point of $u(0,x)$ when $c_{1}=0$. At the convex or
concave point, the calculation of $G$-Brownian motion distribution degenerates
to that of classical Brownian motion distribution, which leads to a higher
convergence rate than the theoretical result. However, the conclusion does not
hold at the inflection point.
\end{example}

\subsection{$G$-FBSDEs}

In the following, we will apply our schemes to solve $G$-FBSDEs. The first
example is the case of one-dimensional $G$-Brownian motion, and the second is
the multi-dimensional case.

\begin{example}
\label{EX2}We consider the following $G$-FBSDEs:
\begin{equation}
\left \{
\begin{aligned} X_{t} & =x_{0}+\int_{0}^{t}\frac{1}{1+2\exp ( s+X_{s}) }ds+\int_{0}^{t}\frac{\exp ( s+X_{s}) }{1+\exp ( s+X_{s}) }dB_{s},\text{ \  \ }0\leq t\leq T,\\ Y_{t} & =\frac{\exp ( T+X_{T}) }{1+\exp ( T+X_{T}) }-\int_{t}^{T}\left[ \frac{Y_{s}}{1+2\exp ( s+X_{s}) }+G\left( \varphi( 2Y_{s}^{2})-1\right) \right] ds\\ & -\frac{1}{2}\int_{t}^{T}\left[ 1+\frac{\varphi(Y_{s})\varphi(Z_{s})}{1+\exp( s+X_{s}) }-\varphi(Y_{s}^{2})\left(2+\varphi(Z_s)\right) \right] d\langle B\rangle_{s}-\int_{t}^{T}Z_{s}dB_{s}-(K_{T}-K_{t}) . \end{aligned}\right.
\label{ex2}%
\end{equation}
The analytic solution of \eqref{ex2} is
\[
\left \{
\begin{aligned} \displaystyle Y_{t} & =\frac{\exp ( t+X_{t}) }{1+\exp ( t+X_{t}) },\text{ \  \ }Z_{t}=\frac{( \exp ( t+X_{t}) ) ^{2}}{( 1+\exp ( t+X_{t}) ) ^{3}},\\ \displaystyle K_{t} & =\int_{0}^{t}\left[ \frac{( \exp ( s+X_{s}) ) ^{2}}{( 1+\exp ( s+X_{s})) ^{2}}-\frac{1}{2}\right] d\langle B\rangle_{s}-\int_{0}^{t}G\left( \frac{2( \exp ( s+X_{s}) ) ^{2}}{( 1+\exp ( s+X_{s}) ) ^{2}}-1\right) ds. \end{aligned}\right.
\]

Set $T=1$, $x_{0}=1$, $\underline{\sigma}=0.7$, and $\overline{\sigma}=1$. The
exact solution is $(Y_{0},Z_{0})=(0.731,0.144)$. We test this example by
Scheme \ref{trinomial tree scheme} and Scheme \ref{GH scheme} with different
$Q_{z}^{n}$. The numerical errors and the corresponding convergence rates for
$N=16,32,64,128,256$ are listed in Table \ref{Table 2}.
It is clear that the selection of $Q_{z}^{n}$ in Scheme \ref{GH scheme} does
not affect the convergence rate, which is consistent with our theoretical
results. \begin{table}[ptbh]
\caption{Errors and convergence rates for Example \ref{EX2}.}%
\label{Table 2}%
{\footnotesize
\[%
\begin{tabular}
[c]{||c|l|c|c|c|c|c|c||}\hline
\multicolumn{8}{||c||}{$\left \vert Y_{0}-Y^{0}\right \vert $}\\ \hline
\multicolumn{2}{||c|}{Scheme} & $N=16$ & $N=32$ & $N=64$ & $N=128$ & $N=256$ &
CR\\ \hline
\multicolumn{2}{||c|}{TR} & 1.656E-03 & 7.642E-04 & 3.746E-04 & 1.808E-04 &
8.960E-05 & 1.050\\ \hline
& $Q_{z}^{n}=Q_{y}^{n}$ & 1.890E-03 & 8.803E-04 & 4.246E-04 & 2.056E-04 &
1.016E-04 & 1.053\\ \cline{2-8}%
GH & $Q_{z}^{n}=\underline{\sigma}$ & 1.832E-03 & 8.519E-04 & 4.105E-04 &
1.985E-04 & 9.806E-04 & 1.055\\ \cline{2-8}
& $Q_{z}^{n}=\overline{\sigma}$ & 1.888E-03 & 8.796E-04 & 4.243E-04 &
2.054E-04 & 1.015E-04 & 1.053\\ \hline
\multicolumn{8}{||c||}{$\left \vert Z_{0}-Z^{0}\right \vert $}\\ \hline
\multicolumn{2}{||c|}{Scheme} & $N=16$ & $N=32$ & $N=64$ & $N=128$ & $N=256$ &
CR\\ \hline
\multicolumn{2}{||c|}{TR} & 5.004E-03 & 2.152E-03 & 9.975E-04 & 4.908E-04 &
2.657E-04 & 1.060\\ \hline
& $Q_{z}^{n}=Q_{y}^{n}$ & 5.731E-03 & 2.625E-03 & 1.213E-03 & 5.666E-04 &
2.745E-04 & 1.098\\ \hline
GH & $Q_{z}^{n}=\underline{\sigma}$ & 5.381E-03 & 2.398E-03 & 1.105E-03 &
5.308E-04 & 2.731E-04 & 1.078\\ \hline
& $Q_{z}^{n}=\overline{\sigma}$ & 5.728E-03 & 2.624E-03 & 1.212E-03 &
5.661E-04 & 2.743E-04 & 1.098\\ \hline
\end{tabular}
\]
}\end{table}
\end{example}

\begin{example}
\label{EX5}In this example, let $G(\cdot)$ be a sublinear function defined in \eqref{G}
with $\Sigma=\{ \lambda Q_{1}+(1-\lambda)Q_{2}, \lambda \in \lbrack0,1]\}$, where
\[
Q_{1}=\left(
\begin{array}
[c]{cc}%
2 & 1\\
1 & 1
\end{array}
\right)  \text{, \  \  \ }Q_{2}=\left(
\begin{array}
[c]{cc}%
1 & 1\\
1 & 2
\end{array}
\right)  .
\]
Let $B_{t}=(B_{t}^{1},B_{t}^{2})^{\top}$ be a two-dimensional $G$-Brownian motion.
We apply Scheme \ref{GH scheme} to the following $G$-FBSDEs:
\begin{equation}
\left \{
\begin{array}
[c]{l}%
\displaystyle Y_{t}=Y_{T}-\int_{t}^{T}\left(  Z_{s}^{1}+Z_{s}^{2}+G\left(
M_{s}\right)  \right)  ds-\int_{t}^{T}Z_{s}dB_{s}-\left(  K_{T}-K_{t}\right)
,\\
\displaystyle Y_{T}=\sin(T+B_{T}^{1})\cos(T+B_{T}^{2}),
\end{array}
\right.  \label{ex5}%
\end{equation}
where $Z_{s}=(Z_{s}^{1},Z_{s}^{2})$, $M_{s}=\left(
\begin{array}
[c]{cc}%
-Y_{s} & V_{s}\\
V_{s} & -Y_{s}%
\end{array}
\right)  $, and $V_{s}=-\cos \left(  s+B_{s}^{1}\right)  \sin \left(
s+B_{s}^{2}\right)  $. The analytic solution of \eqref{ex5}
is given by
\[
\left \{
\begin{array}
[c]{l}%
Y_{t}=\sin \left(  t+B_{t}^{1}\right)  \cos \left(  t+B_{t}^{2}\right)  ,\\
Z_{t}=\left(  \cos(t+B_{t}^{1})\cos \left(  t+B_{t}^{2}\right)  , -\sin \left(
t+B_{t}^{1}\right)  \sin \left(  t+B_{t}^{2}\right)  \right)  ,\\
K_{t}=\frac{1}{2}\int_{0}^{t}M_{s}d\langle B\rangle_{s}-\int_{0}^{t}%
G(M_{s})ds.
\end{array}
\right.
\]

Set $T=1$, $x_{0}=1$, and $L=6$. The exact solution is $(Y_{0},Z_{0})=(0,(1,0))$.
In our tests, we found that the difference between choosing different $Q_{z}
^{n}\in \{Q_{1},Q_{2}\}$ is negligible. We only list
the numerical errors and the convergence rates for $N=16,32,64,128,256$ with
$Q_{z}^{n}=Q_{1}$ in Table \ref{Table 5}. It is shown that our method is stable and has high
accuracy. \begin{table}[ptbh]
\caption{Errors and convergence rates of Scheme \ref{GH scheme} for Example
\ref{EX5}.}%
\label{Table 5}
{\footnotesize
\[%
\begin{tabular}
[c]{||c|c|c|c|c|c|c||}\hline
\multicolumn{7}{||c||}{$\left \vert Y_{0}-Y^{0}\right \vert $}\\ \hline
GH & $N=16$ & $N=32$ & $N=64$ & $N=128$ & $N=256$ & CR\\ \hline
$Q_{z}^{n}=Q_{1}$ & 1.838E-01 & 9.928E-02 & 5.015E-02 & 2.497E-02 &
1.243E-02 & 0.976\\ \hline
\multicolumn{7}{||c||}{$\left \vert Z_{0}-Z^{0}\right \vert $}\\ \hline
GH & $N=16$ & $N=32$ & $N=64$ & $N=128$ & $N=256$ & CR\\ \hline
$Q_{z}^{n}=Q_{1}$ & 1.631E-01 & 5.685E-02 & 1.980E-02 & 7.609E-03 &
3.251E-03 & 1.420\\ \hline
\end{tabular}
\  \  \
\]
}\end{table}
\end{example}

\section{Conclusions}

In this paper, we propose some efficient numerical schemes for solving
$G$-FBSDEs. With the help of the $G$-expectation representation, we design a
feasible method to approximate the conditional $G$-expectation. Then by using
the trinomial tree rule and the Gauss-Hermite quadrature rule to approximate
the distribution of $G$-Brownian motion, we propose some new numerical schemes
for solving $G$-FBSDEs. We also rigorously analyze the errors of our proposed
schemes and prove the convergence results. Several numerical examples are
presented to show the effectiveness of our numerical schemes.

\end{document}